\author[1,2]{Ankur Moitra\thanks{Email: \textit{moitra@mit.edu}. Partially supported by a Microsoft Trustworthy AI Grant, an ONR grant and a David and Lucile Packard Fellowship.}}
\author[3]{Alexander S.\ Wein\thanks{Email: \textit{aswein@ucdavis.edu}. Partially supported by an Alfred P.\ Sloan Research Fellowship and NSF CAREER Award CCF-2338091.}}
\affil[1]{Department of Mathematics, MIT}
\affil[2]{Computer Science and Artificial Intelligence Lab, MIT}
\affil[3]{Department of Mathematics, UC Davis}
\date{}
\title{Precise Error Rates for Computationally Efficient Testing}
\begin{document}

\maketitle

\begin{abstract}
We revisit the fundamental question of simple-versus-simple hypothesis testing with an eye towards computational complexity, as the statistically optimal likelihood ratio test is often computationally intractable in high-dimensional settings. In the classical spiked Wigner model with a general i.i.d.\ spike prior we show (conditional on a conjecture) that an existing test based on linear spectral statistics achieves the best possible tradeoff curve between type I and type II error rates among all computationally efficient tests, even though there are exponential-time tests that do better. This result is conditional on an appropriate complexity-theoretic conjecture, namely a natural strengthening of the well-established low-degree conjecture. Our result shows that the spectrum is a sufficient statistic for computationally bounded tests (but not for all tests).

To our knowledge, our approach gives the first tool for reasoning about the precise asymptotic testing error achievable with efficient computation. The main ingredients required for our hardness result are a sharp bound on the norm of the low-degree likelihood ratio along with (counterintuitively) a positive result on achievability of testing. This strategy appears to be new even in the setting of unbounded computation, in which case it gives an alternate way to analyze the fundamental statistical limits of testing.
\end{abstract}

\newpage

\tableofcontents

\newpage

\section{Introduction}

A fundamental question in mathematical statistics is that of testing between two simple hypotheses, that is, the task of deciding which of two known distributions produced a given sample. The celebrated Neyman--Pearson lemma states that tests based on thresholding the likelihood ratio are optimal in the sense that they sweep out the best possible tradeoff between type I and type II error rates as the threshold is varied. However, in high-dimensional settings, the likelihood ratio is often intractable to compute because it involves summing over an exponential number of possible values for a latent variable. In this work we revisit the classical problem of simple-versus-simple testing with an eye towards computational complexity: we aim to precisely characterize the best possible testing error \emph{achievable by a computationally efficient algorithm}, which may differ from the statistical limit.

\subsection{Spiked Wigner model} While our methods have the potential to be applied more broadly, we focus on one canonical high-dimensional testing problem: the \emph{spiked Wigner model}.

\begin{definition}[Spiked Wigner testing problem]
\label{def:wigner}
For a positive integer $n$, a signal-to-noise ratio (SNR) $\lambda \ge 0$, and a spike prior $\pi$ which is a distribution on $\RR$ with mean 0 and variance 1, let $\PP_{\lambda,\pi,n}$ denote the distribution over $n \times n$ symmetric matrices $Y$ generated as
\begin{equation}\label{eq:def-wigner}
Y = \lambda \cdot \frac{xx^\top}{\|x\|^2} + W
\end{equation}
where $x \in \RR^n$ has entries drawn i.i.d.\ from $\pi$, and $W \in \RR^{n \times n}$ is drawn (independent from $x$) according to the Gaussian Orthogonal Ensemble (GOE): $W$ is symmetric with $\{W_{ij} : i \le j\}$ independent, $W_{ij} \sim \cN(0,1/n)$ for $i \ne j$, and $W_{ii} \sim \cN(0,2/n)$. We adopt the convention $xx^\top/\|x\|^2 = 0$ in the (unlikely) event $\|x\| = 0$. For ease of notation, we will suppress the dependence on $\pi,n$ and write $\PP_\lambda = \PP_{\lambda,\pi,n}$. Our focus is on the asymptotic regime $n \to \infty$ with $\lambda,\pi$ held fixed. We consider testing the null hypothesis $Y \sim \PP_0$ (the case $\lambda = 0$) against a specific alternative $Y \sim \PP_\lambda$, where $\lambda,\pi$ are known.
\end{definition}

\noindent This testing problem amounts to detecting the presence of a rank-1 ``signal'' buried in random ``noise.'' We emphasize that only a \emph{single} sample $Y \in \RR^{n \times n}$, drawn either from $\PP_0$ or $\PP_\lambda$, is observed.

\emph{Spiked} (or \emph{deformed}) random matrix models such as~\eqref{eq:def-wigner} and the related \emph{spiked Wishart model} have been extensively studied from the perspective of random matrix theory~\cite{johnstone,BBP,BS-spiked,peche,paul,FP-wigner,maida,BY-limit,CDF-wigner,nadler,BN-eigenvec,PRS-wigner}. Notably, the model~\eqref{eq:def-wigner} undergoes a spectral transition akin to that of Baik, Ben Arous, and P\'ech\'e~\cite{BBP} at the threshold $\lambda = 1$: when $\lambda \le 1$, the empirical distribution of eigenvalues converges to the Wigner semicircle law (supported on $[-2,2]$) and the maximum eigenvalue converges to $2$; when $\lambda > 1$, the maximum eigenvalue converges to $\lambda + 1/\lambda > 2$ due to a single ``signal'' eigenvalue that exits the semicircular bulk~\cite{FP-wigner,maida}. As an immediate corollary, thresholding the maximum eigenvalue gives a test achieving \emph{strong detection} when $\lambda > 1$, that is, testing $\PP_0$ versus $\PP_\lambda$ with both type I and type II error probabilities tending to $0$ as $n \to \infty$.

More recently, spiked models have been studied from a statistical perspective with the aim of identifying the best possible test, which may not be based on the maximum eigenvalue~\cite{sphericity,sphericity-2,MRZ,LKZ-sparse,BMVVX,opt-subopt,fund-limits-wigner,weak-wigner,det-rect}. For the moment, we restrict ourselves to \emph{spectral tests}, that is, tests that only use the spectrum (multiset of eigenvalues) of $Y$. In the Wigner model (Definition~\ref{def:wigner}), no spectral test can achieve strong detection when $\lambda < 1$~\cite{MRZ} but \emph{weak detection}, i.e., testing with a non-vanishing advantage over random guessing, is possible for any $\lambda > 0$ by thresholding the trace of $Y$. More precisely, the best possible asymptotic tradeoff curve between type I and type II error rates for spectral tests is known for each $\lambda < 1$~\cite{BL-free-1,BL-free-2} (see~\cite{weak-wigner}), and is furthermore achieved by a computationally efficient (polynomial-time) test based on \emph{linear spectral statistics (LSS)}~\cite{weak-wigner}. Specifically,~\cite{weak-wigner} considers a family of tests, henceforth \emph{LSS}, based on thresholding the value
\begin{equation}\label{eq:lss}
\sum_{i=1}^n h_\lambda(\mu_i) \qquad \text{with} \qquad h_\lambda(\mu) := -\log(1 - \lambda \mu + \lambda^2),
\end{equation}
where $\mu_1 \ge \mu_2 \ge \cdots \ge \mu_n$ are the eigenvalues of $Y$, and the optimal function $h_\lambda$ has been carefully chosen. The LSS paradigm has been studied in a long line of work, including~\cite{clt-wishart,clt-band,clt-wigner,BL-free-1,banerjee-ma,weak-wigner,weak-wigner-general-rank,det-rect,lss-block,lss-div-spikes,global-local}.

However, the spectrum of $Y$ only contains information about the norm $\lambda$ of the rank-1 signal term, not its direction $x/\|x\|$. Conceivably a better test can be constructed by exploiting the eigenvectors of $Y$ in addition to the eigenvalues. Indeed, for some (but not all) spike priors $\pi$, there are (non-spectral) tests that achieve strong detection for some $\lambda < 1$~\cite{BMVVX,opt-subopt}, beating the spectral threshold. For instance, if $\pi$ places enough mass on $0$ so that $x$ is sufficiently sparse, strong detection is possible below the spectral threshold by exhaustively enumerating all possible sparsity patterns. For any prior $\pi$ with bounded support,~\cite{fund-limits-wigner} resolves the optimal statistical performance among all possible tests, identifying the exact threshold $\lambda^* = \lambda^*(\pi) \le 1$ above which strong detection is information-theoretically possible (confirming a conjecture of~\cite{LKZ-sparse}), and showing that the error tradeoff (between types I and II) of LSS is information-theoretically optimal when $\lambda < \lambda^*$.

While the above results resolve the fundamental statistical limits of the spiked Wigner testing problem, the computational complexity remains unclear: for all $\pi$ and all $\lambda < 1$, the best known \emph{computationally efficient} test is LSS. In other words, the best known computationally efficient tests use only the spectrum, suggesting that the spectrum may be a ``\emph{computationally sufficient statistic}.'' In this work we aim to confirm this by answering (affirmatively) the following question:
\begin{center}
\emph{Do linear spectral statistics (LSS) achieve the best possible tradeoff between type I and type II error rates among all polynomial-time tests for all $\pi$ and all $\lambda < 1$?}
\end{center}
The regime of interest for this question is $\lambda^* < \lambda < 1$, where strong detection is information-theoretically possible but all known algorithms achieving this require exponential time. For instance, optimal statistical performance is of course achieved by thresholding the likelihood ratio
\begin{equation}\label{eq:wig-lr}
\frac{d\PP_\lambda}{d\PP_0}(Y) = \E_{x \stackrel{\mathrm{iid}}{\sim} \pi} \exp\left(-\frac{n}{4}\left\|Y - \lambda\frac{xx^\top}{\|x\|^2}\right\|_\F^2 + \frac{n}{4}\|Y\|_\F^2\right),
\end{equation}
but naive evaluation of this quantity requires a sum over $\exp(\Omega(n))$ possible values for $x$ (or an integral of comparable complexity, if $\pi$ is not discrete).

\subsection{Average-case complexity} Our main question above lies in the realm of \emph{average-case computational complexity}, as we are interested in inherent limitations on both the runtime and statistical properties of algorithms over a particular distribution of random inputs. While such considerations are prevalent in high-dimensional statistics, we currently lack the tools to prove a lower bound on the runtime of \emph{all} algorithms in these settings, even under a standard complexity assumption such as $\mathsf{P} \ne \mathsf{NP}$. Instead, a rich landscape of frameworks have emerged for giving various forms of ``rigorous evidence'' for computational hardness of statistical tasks. Most approaches either establish hardness conditional on the assumed hardness of a ``standard'' statistical problem such as \emph{planted clique}, or establish unconditional failure of some restricted class of methods such as statistical query (SQ) algorithms or the sum-of-squares (SoS) hierarchy. Some pioneering works in this area include~\cite{jerrum,decelle,sq-clique,BR-reduction,GS-ogp,sos-clique}, and we also refer the reader to~\cite{sos-survey,ld-notes,secret-leakage,phys-survey} for further exposition.

In this work we explore inherent computational barriers from the perspective of the \emph{low-degree polynomial framework}, which first arose from the work of~\cite{sos-clique,HS-bayesian,sos-detect,hopkins-thesis} (see also the survey~\cite{ld-notes}) and was later refined and extended in various directions, e.g.~\cite{SW-estimation,GJW-ld,planted-v-planted,coloring-clique}. This approach amounts to studying the power and limitations of algorithms that can be represented as low-degree polynomial functions of the input variables (in our case, the entries of $Y$). The polynomial degree is thought of as a measure of the algorithm's complexity and a proxy for runtime, with degree $D$ corresponding to runtime roughly $n^D$ (up to $\log n$ factors in the exponent), which is the number of terms in such a polynomial. We refer to the \emph{low-degree conjecture} as the informal belief that the class of degree-$D$ polynomials is as powerful as \emph{all} algorithms of the corresponding runtime, for a particular style of high-dimensional testing problems. This heuristic is by now well-established as a reliable method for predicting and rigorously vindicating suspected computational barriers in a wide array of statistical problems. For instance, in the spiked Wigner testing problem that we consider in this work, with any prior $\pi$ of bounded support, the following phase transition is known for low-degree polynomials: if $\lambda < 1$ then \emph{any} degree-$o(n/\log n)$ polynomial \emph{fails} to achieve strong detection (in an appropriate sense), whereas degree $\omega(1)$ suffices when $\lambda > 1$~\cite{ld-notes}. We consider this as rigorous evidence that strong detection requires exponential runtime $\exp(n^{1-o(1)})$ when $\lambda < 1$, at least for a broad class of known approaches. In contrast, recall that strong detection is possible in polynomial time when $\lambda > 1$.

\subsection{Our contributions} While a myriad of prior work has shown hardness in the low-degree framework for strong (or weak) detection in various models, in this work we seek to answer an even more precise question, as we are concerned with the \emph{exact error probability} in a regime where the type I and type II error rates will both converge to nontrivial constants. To our knowledge, no existing tools allow us to approach this question --- not in the low-degree framework nor in any other framework for average-case complexity. Our main contribution is a new way to argue, based on low-degree polynomials, about the exact error probability achievable by computationally efficient algorithms. We illustrate this with the following result in the spiked Wigner model.

\begin{theorem}[Main result, informal]
\label{thm:main-informal-comp}
Consider the spiked Wigner testing problem (Definition~\ref{def:wigner}) with any prior $\pi$ of bounded support and any $\lambda < 1$. Assuming a natural strengthening of the low-degree conjecture (Conjecture~\ref{conj:new}), any test with error tradeoff (between types I and II) asymptotically better than LSS requires runtime $\exp(n^{1-o(1)})$.
\end{theorem}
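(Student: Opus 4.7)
The strategy is to import the low-degree polynomial framework and use it to characterize, not merely the detection threshold, but the exact type I / type II tradeoff curve asymptotically achievable by low-degree polynomial tests. The theorem then follows from Conjecture~\ref{conj:new}, which I would formulate informally as follows: for $D = n^{1-o(1)}$, the optimal asymptotic tradeoff curve achieved by tests of the form ``threshold a degree-$D$ polynomial of $Y$'' coincides with the optimal tradeoff achievable by any test of runtime $\exp(n^{1-o(1)})$. This is a genuine strengthening of the usual low-degree conjecture, which asserts matching only at the coarser resolution of whether strong or weak detection is possible at all.

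The first main technical step is to compute a sharp asymptotic for the squared $L^2(\PP_0)$ norm of the projection $L^{\le D}$ of the likelihood ratio~\eqref{eq:wig-lr} onto polynomials in the entries of $Y$ of degree at most $D$. Expanding~\eqref{eq:wig-lr} in the Hermite basis adapted to the GOE and using the i.i.d.\ structure of $x$, this squared norm rewrites as a sum over labelled multigraphs with edge weights determined by $\lambda$ and vertex weights by moments of $\pi$. For $\lambda < 1$ the series is convergent term-by-term, and a careful accounting of which diagrams survive in the $n \to \infty$ limit should yield $\|L^{\le D}\|^2 \to F(\lambda)$ as $n \to \infty$ followed by $D \to \infty$, with an explicit formula for $F(\lambda)$. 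A Neyman--Pearson argument internal to the Hilbert space of degree-$D$ polynomials then translates this sharp bound into a limit tradeoff curve for the best type I / type II error achievable by thresholding a degree-$D$ polynomial: asymptotically it reduces to an LR test between two Gaussians whose separation is governed by $\|L^{\le D}\|^2 - 1$.

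The second step --- and this is the counterintuitive use of an achievability result inside a hardness argument --- is to show that LSS saturates the bound above. The statistic~\eqref{eq:lss} is a low-degree function of $Y$ up to negligible truncation, since $h_\lambda$ is analytic in a neighborhood of the semicircular bulk and its Chebyshev expansion can be truncated at any slowly growing degree while preserving the joint Gaussian CLT of~\cite{BL-free-1,weak-wigner} under both $\PP_0$ and $\PP_\lambda$. The joint CLT gives the explicit asymptotic tradeoff curve for LSS, and matching constants should show that this tradeoff is exactly the one predicted by $F(\lambda)$. Sandwiching the upper bound against this matching lower bound, LSS is asymptotically optimal in the class of low-degree tests, and Conjecture~\ref{conj:new} then promotes this to optimality among all efficient tests.

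The main obstacle, in my view, is step one: the existing low-degree calculations for spiked Wigner (e.g.\ those underlying the bounded-norm statement in~\cite{ld-notes}) establish only that $\|L^{\le D}\|^2$ stays bounded as $n \to \infty$ when $\lambda < 1$ --- enough to rule out strong detection at low degree, but not sharp enough to pin down the function $F(\lambda)$. Extracting the exact constant requires identifying precisely which diagrams contribute at leading order and resumming their contributions in a way that matches the variance produced by the LSS CLT; the apparent miracle that these two calculations agree, despite being performed in entirely different bases, is essentially what the theorem is claiming. A secondary, more conceptual, obstacle is to formulate Conjecture~\ref{conj:new} with enough robustness to an $o(1)$ gap between upper and lower bounds for the argument to close, while remaining consistent with LSS being itself a polynomial-time algorithm --- this is the delicate role played by the positive achievability result inside the hardness proof.
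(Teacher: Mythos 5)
Your plan shares the three ingredients the paper uses (a sharp asymptotic for $\|L^{\le D}\|$, the LSS achievability analysis, and a strengthened low-degree conjecture), but the logical glue you propose between them --- a ``Neyman--Pearson argument internal to the Hilbert space of degree-$D$ polynomials'' translating the norm bound into an optimal ROC curve for low-degree tests --- is precisely the step the paper says does not exist. There is no Neyman--Pearson theorem at a fixed degree: the family of tests obtained by thresholding degree-$D$ polynomials is not governed by a single sufficient statistic, thresholding $L^{\le D}$ itself need not trace out the optimal ROC curve within that class, and nothing guarantees that the relevant limiting distributions are Gaussians whose separation is ``governed by $\|L^{\le D}\|^2 - 1$.'' So your Step 1 would fail. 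Relatedly, formulating the conjecture in terms of ROC curves (as you do) rather than in terms of the scalar ratio $R_\lambda(f) = \EE_{\PP_\lambda}[f]/\sqrt{\EE_{\PP_0}[f^2]}$ leaves you with exactly this missing Neyman--Pearson step; the two formulations are not interchangeable.

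The paper inverts the logic to avoid this. It works directly with $R_\lambda(f)$, whose supremum over degree-$D$ polynomials equals $\|L^{\le D}\|$ by the operational definition~\eqref{eq:op-def}, and Conjecture~\ref{conj:new} asserts that efficiently computable $f$ cannot beat low-degree polynomials at maximizing this ratio. The key reduction (Proposition~\ref{prop:reduction}) then shows: if any point strictly above $\phi_\lambda$ were asymptotically achievable in the allotted time, one could stitch together finitely many achievable tests (the hypothetical improvement plus LSS tests filling out $\phi_\lambda$) into a single efficiently computable step function $f$ with $\liminf R_\lambda(f) > \val(\phi_\lambda) = (1-\lambda^2)^{-1/4}$, contradicting the conjecture combined with Theorem~\ref{thm:L-bound}. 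Thus the LSS achievability result enters not as one side of a sandwich pinning down the low-degree ROC curve, but as raw material for constructing the contradicting function $f$; the optimal low-degree ROC curve is never characterized at all.
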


\noindent The precise statement of this result is presented in Section~\ref{sec:our-approach-comp}, particularly Corollary~\ref{cor:main-comp}.

One consequence of our result is a certain \emph{computational universality} with respect to the prior. While the optimal statistical performance (namely the threshold $\lambda^*$ for strong detection) depends on $\pi$, our result shows that the best \emph{computationally efficient} test only uses the spectrum and thus its performance depends only on $\lambda$ (not $\pi$). Put another way, the spectrum is a \emph{sufficient statistic} for computationally-efficient testing. It was known previously that the low-degree threshold for strong detection does not depend on the prior~\cite{ld-notes}, and our result extends this to weak detection.

The computational universality for testing is in contrast to the related task of \emph{estimating} the rank-1 spike. For the estimation problem, the best known computationally efficient algorithm uses \emph{approximate message passing (AMP)}~\cite{amp,BM-amp}, which achieves nontrivial mean squared error whenever $\lambda > 1$ and this error depends on both $\lambda$ and $\pi$~\cite{FR-amp,MV-amp}. For more on the estimation problem, we refer the reader to~\cite{miolane-survey} and references therein.

The argument we use to prove Theorem~\ref{thm:main-informal-comp} can be specialized to the case where there is no restriction on runtime, in which case it yields a new approach for establishing the statistically optimal error tradeoff. To our knowledge, this method has not appeared before in the literature. We illustrate this by giving an alternate re-proof of the following result of~\cite{ALR,fund-limits-wigner}.

\begin{theorem}[Special case of~\cite{fund-limits-wigner}, informal]
\label{thm:main-informal-stat}
Consider the spiked Wigner testing problem (Definition~\ref{def:wigner}) where $\pi$ is the Rademacher prior (uniform on $\{-1,+1\}$) and $\lambda < 1$. No test (regardless of runtime) has error tradeoff asymptotically better than LSS.
\end{theorem}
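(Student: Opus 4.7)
The plan is to pin down the asymptotic distribution of the likelihood ratio $L := d\PP_\lambda/d\PP_0$ under $\PP_0$ by combining a sharp second-moment computation with the positive achievability of LSS, and then invoke the Neyman--Pearson lemma.

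First I would compute $\E_0[L^2]$ sharply. Expanding~\eqref{eq:wig-lr} and integrating out the GOE noise $W$ leaves a Gaussian MGF: using $\mathrm{Var}(u^\top W u) = 2/n$ and $\mathrm{Cov}(u^\top W u,\,(u')^\top W u') = 2(u\cdot u')^2/n$ for unit vectors $u=x/\|x\|$, $u'=x'/\|x'\|$, along with the Rademacher identity $\|x\|^2 = n$, one obtains
\[
\E_0[L^2] \;=\; \E_{x,x'}\exp\!\left(\tfrac{\lambda^2}{2n}(x\cdot x')^2\right).
\]
Since $x\cdot x'$ is a sum of $n$ i.i.d.\ Rademacher variables, $(x\cdot x')^2/n \to \chi^2_1$ in distribution, and Hoeffding's inequality supplies uniform integrability of $\exp(s(x\cdot x')^2/n)$ for any $s < 1/2$; taking $s = \lambda^2/2$ yields the sharp asymptotic $\E_0[L^2] \to (1-\lambda^2)^{-1/2}$.

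Next I would invoke achievability of LSS via~\cite{weak-wigner}: the statistic $S_n := \sum_i h_\lambda(\mu_i)$, centered by its $\PP_0$-mean, converges to $\cN(0,\sigma^2)$ under $\PP_0$ and to $\cN(m,\sigma^2)$ under $\PP_\lambda$, with the specific choice of $h_\lambda$ in~\eqref{eq:lss} engineered so that $m = \sigma^2 = -\tfrac12\log(1-\lambda^2)$. Setting $\tilde L := \exp(S_n - \sigma^2/2)$, the Gaussian MGF gives
\[
\E_0[\tilde L^2] \to e^{\sigma^2} = (1-\lambda^2)^{-1/2}, \qquad \E_0[L\tilde L] = \E_\lambda[\tilde L] \to e^{\sigma^2} = (1-\lambda^2)^{-1/2}.
\]
All three of $\E_0[L^2],\,\E_0[\tilde L^2],\,\E_0[L\tilde L]$ share the same limit, so $\E_0[(L-\tilde L)^2] = \E_0[L^2] - 2\E_0[L\tilde L] + \E_0[\tilde L^2] \to 0$. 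Thus $L$ and $\tilde L$ are asymptotically equal in $L^2(\PP_0)$ and share the same limiting distribution under $\PP_0$, and under $\PP_\lambda$ by contiguity. The Neyman--Pearson test thresholds $L$ and attains the optimal ROC; by asymptotic equivalence this ROC coincides with the LSS ROC (obtained by thresholding $\tilde L$), so no test beats LSS.

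The hard part will be pinning down Step~2 with the correct constants: verifying $m = \sigma^2 = -\tfrac12\log(1-\lambda^2)$ requires combining the Chebyshev expansion $h_\lambda(2\cos\theta) = \sum_{k\ge 1}(2\lambda^k/k)\cos(k\theta)$ with the GOE LSS variance formula and a Stieltjes-transform calculation for the spike-induced mean shift, with careful bookkeeping of normalization conventions throughout so that $\tilde L$ behaves as a genuine approximate likelihood ratio and the three second moments collapse to a common limit. By comparison, the uniform integrability in Step~1 (via Hoeffding tails) and the promotion of $L^2$-closeness to matching ROCs (via Le Cam-style change-of-measure for contiguous sequences) are standard but must be invoked carefully.
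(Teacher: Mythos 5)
Your strategy is genuinely different from the paper's, and is in fact closer in spirit to the ``standard'' route that the paper deliberately avoids: you aim to pin down the limiting distribution of the likelihood ratio $L$ via an $L^2$-approximation $L \approx \tilde L$ and then invoke the Neyman--Pearson lemma. The paper instead constructs a \emph{bounded} step-function statistic $f$ directly from the achievable tests (Proposition~\ref{prop:reduction-stat} and Figure~\ref{fig:pf}) and shows that if any point above $\phi_\lambda$ were achievable then the ratio $R_\lambda(f)$ would exceed $\val(\phi_\lambda) = (1-\lambda^2)^{-1/4}$, contradicting the second-moment bound through the operational identity~\eqref{eq:op-def-wig}. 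Your route has a genuine gap, and not at the step you flag as ``the hard part'': the claimed convergences $\E_0[\tilde L^2] \to (1-\lambda^2)^{-1/2}$ and $\E_0[L\tilde L] = \E_\lambda[\tilde L] \to (1-\lambda^2)^{-1/2}$ require convergence of the moment-generating function of the centered LSS statistic $S_n$ (specifically of $\E_0[\exp(2S_n)]$ and $\E_\lambda[\exp(S_n)]$), which is strictly stronger than the distributional CLT established in~\cite{weak-wigner}. Convergence in distribution of $S_n$ does not by itself give $\E_0[\exp(2S_n)] \to \exp(2\sigma^2)$; since $\exp(2S_n)$ is unbounded one needs uniform integrability, a delicate tail statement about the linear spectral statistic that the cited positive result does not furnish. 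The paper's argument sidesteps this entirely because the constructed $f$ takes only finitely many values, so only the sizes and powers $(\alpha,\beta)$ of the underlying tests enter --- no moment control on any unbounded statistic is needed. It is also worth noting a structural limitation of your approach: it relies inherently on Neyman--Pearson, which has no analogue among computationally bounded tests, so (unlike the paper's Proposition~\ref{prop:reduction}) it could not be upgraded to prove Theorem~\ref{thm:main-informal-comp}.
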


\noindent The precise statement of this result is presented in Section~\ref{sec:our-approach-stat}, particularly Theorem~\ref{thm:main-stat}. The proof of~\cite{fund-limits-wigner} characterizes the limiting distribution of the likelihood ratio by leveraging some powerful machinery of Guerra and Talagrand from spin-glass theory. The same result can be deduced from the earlier work~\cite[Proposition~2.2]{ALR}, which is specific to the Rademacher prior and takes a combinatorial approach called \emph{cluster expansion}. Our proof is quite different, and arguably more elementary. We require two ingredients: first, a sharp bound on the second moment of the likelihood ratio; and second, a positive result showing achievability of some tradeoff curve (between errors of types I and II) that ``saturates'' the first bound. In the case of Theorem~\ref{thm:main-informal-stat}, the positive result is the analysis of LSS from~\cite{weak-wigner}. In a nutshell, the standard approach requires a direct analysis of the likelihood ratio, whereas our approach requires the analysis of \emph{any} test combined with a matching second moment bound.

\subsection*{Notation}

By default, asymptotic notation refers to the limit $n \to \infty$ with all other parameters held fixed (aside from those that are explicitly allowed to scale with $n$, such as $D = D_n$). In other words, notation such as $O(\cdot)$, $\Omega(\cdot)$, $o(\cdot)$, $\omega(\cdot)$ may hide factors depending on constants such as $\lambda,\pi$. We use $\poly(n)$ as shorthand for $n^{O(1)}$, and $\polylog(n)$ as shorthand for $(\log n)^{O(1)}$. The term ``polynomial time'' refers to an algorithm of runtime $\poly(n)$.

\section{Main Results}

\subsection{Background}
\label{sec:background}

\subsubsection{ROC curve}

We recall some basic notions from the theory of hypothesis testing, referring the reader to~\cite{testing-book} for a standard reference. Given two distributions $\cP,\cQ$ on a set $\Omega$, we consider testing the null hypothesis $Y \sim \cQ$ against the (simple) alternative $Y \sim \cP$. (Some authors use the opposite meaning of $\cP$ and $\cQ$ but we use the mnemonic $\cP$ = ``planted.'') A \emph{test} is a (possibly random) function $t: \Omega \to \{\fp,\fq\}$, where the symbols $\fp,\fq$ encode the assertion that $Y$ was drawn from $\cP$ or $\cQ$ respectively. The \emph{size} $\alpha \in [0,1]$ of a test (also called the \emph{type I error rate} or \emph{false positive rate}) is defined as
\begin{equation}\label{eq:def-alpha}
\alpha = \cQ(t(Y) = \fp),
\end{equation}
and the \emph{power} $\beta \in [0,1]$ (or \emph{true positive rate}) is defined as
\begin{equation}\label{eq:def-beta}
\beta = \cP(t(Y) = \fp).
\end{equation}
(Here we follow the convention of~\cite{testing-book} but note that some authors use $\beta$ for the type II error rate, which is $1-\beta$ in our notation.) Constrained to a given value of $\alpha$, it is desirable to find a test maximizing $\beta$.

For a class of tests $\cC$, let $N = N_\cC \subseteq [0,1]^2$ denote the set of $(\alpha,\beta)$ pairs for which there exists a test $t \in \cC$ satisfying~\eqref{eq:def-alpha} and~\eqref{eq:def-beta}. There are trivial tests achieving the points $(0,0)$ and $(1,1)$, so these are contained in $N$ (assuming the class $\cC$ contains these trivial tests). Also, by considering probabilistic mixtures of two tests (i.e., run $t_1$ with probability $p$ and $t_2$ with probability $1-p$), it is clear that $N$ is a convex set (assuming $\cC$ is closed under probabilistic mixtures). Finally, by flipping the output of a test, $N$ is symmetric with respect to the point $(1/2,1/2)$ in the sense that $(\alpha,\beta) \in N$ if and only if $(1-\alpha,1-\beta) \in N$ (again assuming $\cC$ is closed under flipping the output). We therefore restrict our attention to $(\alpha,\beta)$ pairs in the upper triangle
\[ \Delta := \{(\alpha,\beta) \in [0,1]^2 \,:\, \alpha \le \beta\}. \]
The curve $\phi: [0,1] \to [0,1]$ that bounds the upper edge of the region $N$, namely
\[ \phi(\alpha) = \sup\{\beta \,:\, (\alpha,\beta) \in N\}, \]
is called the \emph{ROC (receiver operating characteristic) curve} for the class $\cC$. The properties of $N$ discussed above imply that $\phi$ is increasing and concave, with $\phi(\alpha) \ge \alpha$. This curve describes the possible tradeoffs between type I and type II errors achievable by $\cC$. At one extreme, $\cC$ could be the class of tests that thresholds a particular statistic at varying thresholds. At the other extreme, $\cC$ could be the class of \emph{all} tests, in which case the corresponding \emph{optimal} ROC curve describes the fundamental limits for testing error.

\begin{remark}[Terminology]
The ROC curve is a popular notion in machine learning. A related notion from classical statistics is the \emph{power function} of a test, although typically this describes the power as a function of the alternative hypothesis (e.g., the parameter $\lambda$ in the spiked Wigner model), rather than a function of $\alpha$. Similarly, the optimal ROC curve (when $\cC$ is the class of all tests) is related to the notion of \emph{power envelope} (or \emph{envelope power function}).
\end{remark}

We will be interested in an asymptotic setting where $\cP = \cP_n$ and $\cQ = \cQ_n$ are sequences of distributions, namely the spiked Wigner distributions $\cQ_n = \PP_{0,\pi,n}$ and $\cP_n = \PP_{\lambda,\pi,n}$ for a fixed choice of $\lambda,\pi$. Further, we will often restrict ourselves to tests $t = t_n$ with a given runtime $\cT_n$, where $\cT_n$ may stand for a class of asymptotic runtimes such as $O(n)$ or $\poly(n)$.

\begin{definition}\label{def:achieve}
Fix sequences $\cP_n$ (alternative) and $\cQ_n$ (null), and a runtime bound $\cT_n$. A point $(\alpha,\beta) \in \Delta$ is \emph{asymptotically achievable} in time $\cT_n$ if there is a sequence of tests $t = t_n$ computable in time $\cT_n$ such that
\[ \cQ_n(t_n(Y) = \fp) \le \alpha + o(1) \qquad\text{ and }\qquad \cP_n(t_n(Y) = \fp) \ge \beta - o(1) \]
as $n \to \infty$.
\end{definition}

\noindent We emphasize that $\alpha,\beta$ do not depend on $n$. For given sequences $\cP_n, \cQ_n$ and a given runtime bound $\cT_n$, one can define the set of asymptotically achievable $(\alpha,\beta)$ pairs analogous to $N$, as well as the associated asymptotic ROC curve.

\begin{remark}[Model of computation]
\label{rem:comp-model}
To rigorously define the notion of runtime, we must specify a model of computation for real-valued inputs. The arguments we use in this paper are not particularly sensitive to the model of computation, and for our purposes we need a model that is powerful enough to implement the LSS test in polynomial time (Theorem~\ref{thm:positive}) but weak enough that we believe the strong low-degree conjecture (Conjecture~\ref{conj:new}). The PhD thesis of Hopkins~\cite{hopkins-thesis}, which introduced the original low-degree conjecture, proposes (in a footnote on pg.\ 80) to use the real RAM model. For concreteness, one can take the real RAM model as the model of computation throughout this paper, specifically the variant defined by Blum--Shub--Smale~\cite{BSS}. The proof of Theorem~\ref{thm:positive} will include an explanation of how to implement the LSS test in this model. The real RAM model is an abstract model of computation where each memory cell can hold a real number, and the operations of addition, subtraction, multiplication, division, and comparison can be performed in a single step. This is arguably an unrealistic model for real-world digital computers because issues of numerical precision are ignored, but it is a convenient and popular theoretical abstraction. Plus, we are ultimately proving a negative result, which only becomes stronger when using a stronger model of computation. As a final note, we will allow algorithms to depend on the known parameters $\lambda,\pi,\alpha,\beta$ (rather than, say, taking these as input), which means that any real-valued constants depending on these parameters may be ``hard-coded'' into the algorithm.
\end{remark}

\subsubsection{Likelihood ratio and second moment}

Again consider sequences of distributions $\cP_n,\cQ_n$ on a set $\Omega_n$, and further assume $\cP_n$ is absolutely continuous with respect to $\cQ_n$ for each $n$. The \emph{likelihood ratio} $L_n = L_n(Y)$ is defined to be the Radon--Nikodym derivative $d\cP_n/d\cQ_n$. Working in the function space $\cL^2(\cQ_n)$ with inner product $\langle f,g \rangle := \EE_{Y \sim \cQ_n}[f(Y) \cdot g(Y)]$ and norm $\|f\| := \sqrt{\langle f,f \rangle}$, an important quantity is the \emph{squared norm (or second moment) of the likelihood ratio}:
\[ \|L_n\|^2 = \E_{Y \sim \cQ_n} [L_n(Y)^2] = \E_{Y \sim \cP_n} [L_n(Y)]. \]
It is a standard fact that asymptotic bounds on $\|L_n\|$ as $n \to \infty$ have implications for statistical indistinguishability of $\cP_n$ and $\cQ_n$ (see e.g.~\cite[Lemma~2]{MRZ}), namely:
\begin{itemize}
    \item If $\|L_n\| = O(1)$ then strong detection is impossible, or in other words, $(\alpha,\beta) = (0,1)$ is not asymptotically achievable (by any test, regardless of runtime).
    \item If $\|L_n\| = 1+o(1)$ then weak detection is impossible, or in other words, no $(\alpha,\beta) \in \Delta$ with $\alpha < \beta$ is asymptotically achievable.
\end{itemize}
(We always have $\|L_n\| \ge 1$, using Jensen's inequality and the fact $\EE_{\cQ_n}[L_n] = 1$.) This gives a powerful tool for ruling out strong or weak detection via a relatively tractable second moment calculation, which is carried out for the spiked Wigner model in~\cite{MRZ,BMVVX,opt-subopt}.

However, to exactly pin down the optimal asymptotic ROC curve in a regime where weak (but not strong) detection is possible, a more refined strategy is needed. The standard approach is the following. The Neyman--Pearson lemma implies that the optimal ROC curve is swept out by tests that threshold $L_n(Y)$, as the threshold is varied. It therefore suffices to determine the limiting distribution of $L_n$ under both $\cP_n$ and $\cQ_n$. This is often shown directly for one of the two hypotheses ($\cP_n$ or $\cQ_n$), and then the distribution of $L_n$ under the other can be deduced immediately via Le Cam's third lemma. This approach is carried out for the spiked Wigner model in~\cite{fund-limits-wigner}.

\subsubsection{Low-degree testing}

Luckily, some parts of the above theory have natural analogues in the setting where we restrict our attention to computationally efficient tests --- or rather, to low-degree polynomial tests as a proxy for this~\cite{HS-bayesian,sos-detect,hopkins-thesis} (see also the survey~\cite{ld-notes}). We consider the same asymptotic setting as above and additionally assume the domain $\Omega_n$ is a subset of $\RR^M$ for some $M = M_n$ so that we may speak of (multivariate) polynomial functions $f: \Omega_n \to \RR$. The analogue of the likelihood ratio $L$ (suppressing $n$-dependence for ease of notation) is the \emph{low-degree likelihood ratio} $L^{\le D}$, which is the orthogonal projection in $\cL^2(\cQ)$ of $L$ onto the subspace of degree-$D$ polynomial functions. The norm of $L^{\le D}$ plays a similar role as its statistical analogue (see e.g.~\cite[Proposition~6.2]{fp}), namely:
\begin{itemize}
    \item If $\|L^{\le D}\| = O(1)$ for some $D = D_n$ then no degree-$D$ polynomial $f = f_n$ achieves \emph{strong separation} between $\cP$ and $\cQ$, defined as
    \[ \sqrt{\max\left\{\V_{\cP}[f],\V_{\cQ}[f]\right\}} = o\left(\left|\E_\cP[f] - \E_\cQ[f]\right|\right) \]
    as $n \to \infty$. Note that strong separation is a natural sufficient condition for strong detection by thresholding $f(Y)$.
    \item If $\|L^{\le D}\| = 1+o(1)$ for some $D = D_n$ then no degree-$D$ polynomial $f = f_n$ achieves \emph{weak separation} between $\cP$ and $\cQ$. Weak separation has the same definition as strong separation but with $O(\cdots)$ in place of $o(\cdots)$, and is a natural sufficient condition for weak detection using the value of $f(Y)$~\cite[Proposition~6.1]{fp}.
\end{itemize}

\noindent Results of this form are considered ``evidence'' that the associated strong/weak detection problem is inherently hard for algorithms of the corresponding runtime.

\begin{conjecture}[Low-degree conjecture, informal]
\label{conj:ld}
If degree-$D$ polynomials fail to solve a testing problem (in the sense of strong/weak separation) for some $D = \omega(\log n)$, then there is no polynomial-time algorithm for the associated strong/weak (respectively) detection task. In general, if degree-$D$ polynomials fail then there is no algorithm of runtime $\exp(\tilde\Omega(D))$ where $\tilde\Omega(\cdot)$ hides a $\polylog(n)$ factor.
\end{conjecture}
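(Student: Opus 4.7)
The conjecture as stated is a meta-heuristic linking a purely analytic quantity, $\|L^{\le D}\|$, to the failure of every algorithm in a runtime class, so any literal proof is well beyond current techniques: it would entail unconditional complexity lower bounds of a strength that does not follow even from $\mathsf{P}\ne\mathsf{NP}$. My plan would therefore not be to attempt a direct proof, but to build evidence for it by (a) delimiting the class of problems to which it is meant to apply, and then (b) verifying its predictions against each of the canonical algorithm classes one at a time.

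For step (a), I would restrict attention to ``planted-type'' problems in which $\cQ$ is a sufficiently symmetric product-like measure and $\cP$ arises from a low-complexity planted signal; without such a restriction there are known counterexamples, for instance problems with linear-algebraic structure where Gaussian elimination dramatically outperforms any bounded-degree polynomial. For step (b), I would prove, for each natural algorithm class $\mathcal{A}$, an implication of the form ``$\|L^{\le D}\| = O(1)$ implies that $\mathcal{A}$ at the corresponding runtime fails.'' Concretely: spectral methods reduce to traces of polynomials in $Y$ and are therefore subsumed by low-degree tests of roughly matching degree; $O(1)$ iterations of approximate message passing are literally a low-degree polynomial of $Y$; statistical query algorithms admit implications via the SQ dimension and its known connections to the low-degree norm; and the sum-of-squares hierarchy is linked to low-degree reasoning through pseudo-calibration, following~\cite{sos-clique,hopkins-thesis}. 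Each such implication upgrades a second-moment bound into a rigorous lower bound against a specific natural algorithm family, and together they form the inductive base on which the conjecture rests.

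The main obstacle---and the reason this will never be a theorem in its full generality---is precisely that no known framework captures \emph{all} polynomial-time algorithms, so the final leap from ``every class we have examined fails'' to ``every efficient algorithm fails'' is irreducibly a matter of accumulated inductive evidence rather than formal deduction. A secondary obstacle is that even the conjecture's quantitative form, namely the correspondence $D \leftrightarrow \exp(\tilde\Omega(D))$, is not matched by any of the above implications in a universally tight way; nailing down the exponent requires reopening each individual correspondence (e.g.\ the $n^D$ cost of evaluating a degree-$D$ polynomial versus what SoS or AMP actually need). For the present paper's purposes, the appropriate response is therefore to take the low-degree conjecture (or a suitable strengthening, Conjecture~\ref{conj:new}) as a working assumption and derive conditional consequences, which is the route the authors announce in Theorem~\ref{thm:main-informal-comp}.
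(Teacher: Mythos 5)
You have correctly recognized that this statement is a conjecture, not a theorem, and that the paper neither proves it nor attempts to. The paper is explicit on this point: ``Given the current state of average-case complexity theory, we do not realistically hope to prove any variant of the low-degree conjecture. Rather, its purpose is to serve as a guide for making principled conjectures\ldots'' Your assessment therefore matches the paper's own stance exactly, and your closing observation---that the correct methodological move is to take Conjecture~\ref{conj:new} as a working assumption and derive conditional consequences---is precisely what the paper does in Corollary~\ref{cor:main-comp}. Your proposed evidence-gathering program (restricting to planted-type problems to avoid counterexamples, then checking the conjecture against spectral methods, AMP, SQ, and SoS) is a reasonable and standard account of why the conjecture is believed, consistent with the references the paper cites (\cite{hopkins-thesis,ld-notes,HW-counter}), though it goes into more detail than the paper itself, which confines its justification to a brief remark sketching how the LSS algorithm can be approximated by a low-degree polynomial. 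There is nothing to correct here: no proof was expected, and none was offered.
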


\noindent This informal conjecture is inspired by~\cite[Hypothesis~2.1.5 \& Conjecture~2.2.4]{hopkins-thesis}, along with the fact that low-degree polynomials capture the best known algorithms for a wide variety of statistical tasks. The conjecture appears to hold up for distributions $\cP,\cQ$ of a particular style that often arises in high-dimensional statistics (including the spiked Wigner and Wishart models~\cite{sk-cert,ld-notes,subexp-sparse,spectral-planting}), but it does not hold for \emph{all} distributions $\cP,\cQ$, and we refer the reader to~\cite{hopkins-thesis,ld-notes,HW-counter,morris,KM-tree,lattice-1,lattice-2} for further discussion on which distributions are appropriate. Given the current state of average-case complexity theory, we do not realistically hope to prove any variant of the low-degree conjecture. Rather, its purpose is to serve as a guide for making principled conjectures about average-case complexity in settings where we would otherwise have no way to make progress.

The above framework provides a useful tool for probing the computational feasibility of strong or weak detection, but our goal in this work is to ask a more refined question: we seek the best possible ROC curve achievable by algorithms of a given runtime, in a regime where weak (but not strong) detection is possible by this class of algorithms. As discussed previously, the statistical analogue of this question is traditionally attacked via the Neyman--Pearson lemma combined with Le Cam's third lemma. This approach does not seem viable in the computationally bounded setting because there is no analogue of Neyman--Pearson, that is, there is no guarantee that thresholding $L^{\le D}$ achieves the best possible ROC curve among low-degree tests. We will overcome this by taking a different approach, explained in the following sections.

\subsection{Our Approach: Statistical Limits}
\label{sec:our-approach-stat}

To explain our approach, we first consider the purely statistical question of determining the optimal ROC curve with no constraints on runtime. This section serves as a ``warm-up'' to the more general framework presented in the next section, which accounts for computational complexity. The ideas in this section may also be of independent interest, as they provide a way to indirectly characterize the optimal ROC curve without analyzing the distribution of the likelihood ratio.

We will illustrate our approach by re-proving the following known result in the spiked Wigner model. For $\lambda > 0$ and $\alpha \in [0,1]$, define
\begin{equation}\label{eq:phi_lambda}
\phi_\lambda(\alpha) := 1 - \Phi\left[\Phi^{-1}(1-\alpha) - \sqrt{\frac{1}{2} \log\left(\frac{1}{1-\lambda^2}\right)}\right]
\end{equation}
where $\Phi$ denotes the standard normal CDF function, and we take the conventions $\phi_\lambda(0) = 0$, $\phi_\lambda(1) = 1$. This is the ROC curve achieved by linear spectral statistics (LSS)~\cite{weak-wigner}. We will show this is statistically optimal for the Rademacher spike prior.

\begin{theorem}[Special case of~\cite{fund-limits-wigner}]
\label{thm:main-stat}
Consider the spiked Wigner testing problem (Definition~\ref{def:wigner}) where $\pi$ is uniform on $\{-1,+1\}$ and $\lambda \in (0,1)$. Any $(\alpha,\beta) \in [0,1]^2$ with $\beta > \phi_\lambda(\alpha)$ is not asymptotically achievable (in the sense of Definition~\ref{def:achieve}), regardless of runtime.
\end{theorem}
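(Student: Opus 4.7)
The plan is to combine a sharp computation of $\|L_n\|^2 = \EE_{\PP_0}[L_n^2]$ with the already-known achievability of $\phi_\lambda$ by LSS~\cite{weak-wigner}, bridged by a variational identity that converts the second moment into an integral against the Neyman--Pearson-optimal ROC curve. Concretely, for each $n$ the NP ROC $\phi_n^*$ has slope $\phi_n^{*\prime}(\alpha) = \tau$ at the level $\alpha$ for which the NP threshold satisfies $\PP_0(L_n > \tau) = \alpha$, which yields the identity
\[ \|L_n\|^2 = \int_0^1 \phi_n^{*\prime}(\alpha)^2 \, d\alpha. \]
An asymptotic upper bound on $\|L_n\|^2$ thus constrains the shape of the achievable ROC, while the pointwise lower bound $\phi \ge \phi_\lambda$ for the asymptotic optimal ROC $\phi$ is immediate from LSS achievability.

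First I would compute the second moment. Integrating out the GOE noise in \eqref{eq:wig-lr} via the Gaussian MGF gives $\|L_n\|^2 = \EE_{x,x' \sim \pi^{\otimes n}}\exp\bigl(\tfrac{\lambda^2}{2n}(x \cdot x')^2\bigr) + o(1)$. For the Rademacher prior, $(x \cdot x')/\sqrt{n}$ is a centered $\pm 1$ random walk converging in distribution to $Z \sim \cN(0,1)$; the hypothesis $\lambda < 1$ guarantees finiteness of the Gaussian MGF $\EE[\exp(\lambda^2 Z^2 / 2)] = (1-\lambda^2)^{-1/2}$ and supplies the uniform integrability needed to pass to the limit, yielding $\|L_n\|^2 \to (1-\lambda^2)^{-1/2}$. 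A change of variables $z = \Phi^{-1}(1-\alpha)$ in \eqref{eq:phi_lambda} then shows $\int_0^1 \phi_\lambda'(\alpha)^2 \, d\alpha = e^{\sigma^2} = (1-\lambda^2)^{-1/2}$, where $\sigma^2 = \tfrac12 \log \frac{1}{1-\lambda^2}$, so the Gaussian ROC \emph{exactly saturates} the asymptotic second-moment budget.

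The final ingredient is a simple functional inequality: if $\phi, \tilde\phi: [0,1] \to [0,1]$ are concave with matching endpoints $\phi(0) = \tilde\phi(0) = 0$, $\phi(1) = \tilde\phi(1) = 1$, and $\phi \ge \tilde\phi$ pointwise, then $\int_0^1 (\phi')^2 \ge \int_0^1 (\tilde\phi')^2$, with equality iff $\phi = \tilde\phi$. Indeed, writing $\epsilon = \phi - \tilde\phi \ge 0$ (which vanishes at the endpoints) and integrating by parts,
\[ \int_0^1 (\phi')^2 \, d\alpha - \int_0^1 (\tilde\phi')^2 \, d\alpha = -2\int_0^1 \tilde\phi''(\alpha)\,\epsilon(\alpha)\,d\alpha + \int_0^1 (\epsilon'(\alpha))^2 \, d\alpha \ge 0, \]
since $\tilde\phi'' \le 0$ (interpreted as a nonpositive measure for non-smooth concave $\tilde\phi$) and $\epsilon \ge 0$; equality forces $\epsilon' \equiv 0$, hence $\epsilon \equiv 0$. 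Putting the pieces together: any asymptotically achievable ROC point strictly above $\phi_\lambda$ would, by forming a concave hull with LSS, yield a concave $\phi \ge \phi_\lambda$ with $\int (\phi')^2 \, d\alpha$ bounded away from $\int (\phi_\lambda')^2 \, d\alpha$, contradicting the convergence $\|L_n\|^2 \to (1-\lambda^2)^{-1/2} = \int (\phi_\lambda')^2 \, d\alpha$. The main obstacle is making this quantitative contradiction precise: one must show that a pointwise gap between $\phi$ and $\phi_\lambda$ produces an $\Omega(1)$ gap in the integrated-slope-squared, and one must handle the behavior of the concave envelope near the endpoints $\alpha \in \{0,1\}$, where $\phi_\lambda$ has unbounded slope; a suitable truncation and compactness (Helly-type) argument should suffice.
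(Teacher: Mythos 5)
Your high-level plan shares both main ingredients with the paper --- the sharp second-moment limit $\|L_n\|^2 \to (1-\lambda^2)^{-1/2}$ (Theorem~\ref{thm:L-bound-rad}) and the LSS achievability of $\phi_\lambda$ (Theorem~\ref{thm:positive}) --- but the bridge you use to derive a contradiction is genuinely different from the paper's. You pass through the Neyman--Pearson ROC curve $\phi_n^*$ itself, via the identity $\|L_n\|^2 = \int_0^1 \phi_n^{*\prime}(\alpha)^2\,d\alpha$ (which says that the likelihood ratio is the optimizer of the ratio~\eqref{eq:op-def-wig}), and then invoke a clean functional inequality: for concave $\phi \ge \tilde\phi$ with matching endpoints, $\int(\phi')^2 \ge \int(\tilde\phi')^2$, proved by integration by parts and concavity of $\tilde\phi$. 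This inequality is an elegant reformulation of what the paper proves computationally in Lemma~\ref{lem:pushout}, and your derivation is arguably more illuminating. By contrast the paper's Proposition~\ref{prop:reduction} never touches $\phi_n^*$ or $L_n$ directly; it constructs an explicit function $f$ from a finite list of \emph{arbitrary} tests achieving grid points $(a_i,b_i)$, and evaluates $R_\lambda(f)$ by a discrete version of the same identity. The reason for this is important: the paper's construction produces an $f$ whose runtime matches that of the input tests, so the same argument carries over verbatim to the computational setting via Conjecture~\ref{conj:new}. Your route cannot do this, because it leans on the Neyman--Pearson lemma --- the statement that thresholding $L_n$ sweeps out $\phi_n^*$ --- which has no low-degree analogue (there is no guarantee that thresholding $L^{\le D}$ is optimal among low-degree tests, as the paper notes at the end of Section~\ref{sec:background}). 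So for Theorem~\ref{thm:main-stat} as stated your plan is sound, but it is intrinsically a ``statistics-only'' argument.

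On the acknowledged gaps: you correctly flag that passing from $\phi_n^*(\alpha^*) \ge \beta^* - o(1)$ and $\phi_n^* \ge \phi_\lambda - o(1)$ (only approximately) to a uniform $\Omega(1)$ gap $\int(\phi_n^{*\prime})^2 \ge (1-\lambda^2)^{-1/2} + \Omega(1)$ needs care, and that the integration-by-parts boundary terms are delicate near $\alpha \in \{0,1\}$ where $\phi_\lambda'$ is unbounded. These are real but fixable. The cleanest repair, which ends up parallel to the paper's Lemmas~\ref{lem:construct-u}--\ref{lem:construct-v}, is to discretize: pick a \emph{fixed} finite grid $0 = a_0 < \cdots < a_r = 1$ and points $b_i$ in concave position with $b_i$ strictly below the concave envelope $\psi$ of $\phi_\lambda$ and $(\alpha^*,\beta^*)$, chosen so that $\sum_i (b_{i+1}-b_i)^2/(a_{i+1}-a_i) > (1-\lambda^2)^{-1/2}$. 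For large $n$ every $(a_i,b_i)$ is achieved, hence $\phi_n^*(a_i) \ge b_i$; by Cauchy--Schwarz the piecewise-linear interpolation $\hat\phi_n$ of $\phi_n^*$ on the grid satisfies $\int(\hat\phi_n')^2 \le \int(\phi_n^{*\prime})^2 = \|L_n\|^2$; and your functional inequality, applied to the two piecewise-linear concave functions $\hat\phi_n \ge \conc(v)$ on the same grid (where boundary terms are trivially zero), gives $\int(\hat\phi_n')^2 \ge \sum_i (b_{i+1}-b_i)^2/(a_{i+1}-a_i)$, completing the contradiction. So the proposal is correct modulo this discretization, which you anticipate; it is a viable alternative proof of Theorem~\ref{thm:main-stat}, just not one that extends to Corollary~\ref{cor:main-comp}.
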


\noindent Since we are considering the Rademacher prior, the model lacks spherical symmetry, which precludes approaches such as~\cite{sphericity,BL-free-1,BL-free-2}. The proof of~\cite{fund-limits-wigner} uses machinery developed by Guerra and Talagrand in their study of the Sherrington--Kirkpatrick spin-glass model, and the same result also follows from~\cite[Proposition~2.2]{ALR}, which takes a combinatorial approach specific to the Rademacher prior. Our alternate proof instead uses the analysis of LSS from~\cite{weak-wigner}, combined with a relatively simple second moment calculation from~\cite{opt-subopt}.

We begin with the second moment calculation.

\begin{theorem}[Special case of~\cite{opt-subopt}, Theorem~3.10]
\label{thm:L-bound-rad}
Consider the spiked Wigner testing problem (Definition~\ref{def:wigner}) where $\pi$ is uniform on $\{-1,+1\}$ and $\lambda \in (0,1)$. The likelihood ratio $L_\lambda := d\PP_\lambda/d\PP_0$ has $\cL^2(\PP_0)$-norm (as defined in Section~\ref{sec:background})
\begin{equation}\label{eq:L-bound-rad}
\lim_{n \to \infty} \|L_\lambda\| = (1-\lambda^2)^{-1/4}.
\end{equation}
\end{theorem}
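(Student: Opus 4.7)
The plan is to compute $\|L_\lambda\|^2 = \E_{Y \sim \PP_0}[L_\lambda(Y)^2]$ directly by a standard Gaussian integration, reducing it to a scalar expectation over the overlap of two independent Rademacher vectors, and then applying the central limit theorem. First I would open up the likelihood ratio~\eqref{eq:wig-lr}: expanding the Frobenius norm and using $\|xx^\top/\|x\|^2\|_\F^2 = 1$, one obtains the clean form
\[
L_\lambda(Y) = \E_{x \stackrel{\mathrm{iid}}{\sim} \pi}\exp\!\left(\frac{n\lambda}{2}\,\frac{x^\top Y x}{\|x\|^2} - \frac{n\lambda^2}{4}\right).
\]
Squaring and introducing an independent copy $x'$, the second moment becomes $\E_{x,x'}\E_{W}\exp\!\bigl(\tfrac{n\lambda}{2}\langle W, A+A'\rangle_\F - \tfrac{n\lambda^2}{2}\bigr)$, where $A = xx^\top/\|x\|^2$ and $A' = x'x'^\top/\|x'\|^2$.

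Next I would evaluate the Gaussian MGF. A short variance computation for GOE gives $\V(\langle W,M\rangle_\F) = (2/n)\|M\|_\F^2$ for any symmetric $M$, so $\E_W \exp\!\bigl(\tfrac{n\lambda}{2}\langle W,M\rangle_\F\bigr) = \exp\!\bigl(\tfrac{n\lambda^2}{4}\|M\|_\F^2\bigr)$. Plugging in $M=A+A'$ and using $\|A\|_\F^2 = \|A'\|_\F^2 = 1$ together with $\langle A,A'\rangle_\F = (x^\top x')^2/(\|x\|^2\|x'\|^2)$, the two $\exp(\pm n\lambda^2/2)$ factors cancel and we are left with
\[
\|L_\lambda\|^2 \;=\; \E_{x,x'}\exp\!\left(\frac{n\lambda^2}{2}\,\frac{(x^\top x')^2}{\|x\|^2\,\|x'\|^2}\right).
\]
For the Rademacher prior this simplifies dramatically because $\|x\|^2 = \|x'\|^2 = n$ almost surely, giving $\|L_\lambda\|^2 = \E\exp\!\bigl(\tfrac{\lambda^2}{2n}Z^2\bigr)$, where $Z := x^\top x' = \sum_{i=1}^n x_i x_i'$ is a sum of $n$ i.i.d.\ Rademacher random variables (conditioning on $x$, each $x_i x_i'$ is $\pm 1$ with equal probability).

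The last step is to take $n \to \infty$. The CLT gives $Z/\sqrt{n} \Rightarrow G$ with $G \sim \cN(0,1)$, and for $\lambda^2 < 1$ the standard Gaussian identity yields the target value $\E\exp(\tfrac{\lambda^2}{2}G^2) = (1-\lambda^2)^{-1/2}$. The real work, and what I expect to be the only nontrivial step, is justifying the interchange of limit and expectation, since the function $z \mapsto e^{\lambda^2 z^2/2}$ is unbounded. The cleanest route is a uniform-in-$n$ upper bound: using the Gaussian representation $\exp(\tfrac{\lambda^2}{2n}Z^2) = \E_G \exp(G Z \lambda/\sqrt{n})$ and the sub-Gaussian MGF $\E[\exp(sZ)] = \cosh(s)^n \le \exp(ns^2/2)$, one gets
\[
\E\exp\!\left(\frac{\lambda^2}{2n}Z^2\right) \;\le\; \E_G\exp\!\left(\frac{\lambda^2}{2}G^2\right) \;=\; (1-\lambda^2)^{-1/2}
\]
for every $n$, provided $\lambda^2 < 1$. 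Combined with Fatou's lemma (or dominated convergence, applied to a truncation of $Z/\sqrt n$) and the CLT, this pins down the limit at exactly $(1-\lambda^2)^{-1/2}$, and taking the square root yields~\eqref{eq:L-bound-rad}.
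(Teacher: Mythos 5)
Your proof is correct. Note first that the paper does not actually prove Theorem~\ref{thm:L-bound-rad}; it is cited directly from~\cite{opt-subopt}. The closest argument the paper does contain is the proof of Theorem~\ref{thm:L-bound} (the low-degree analogue), which relies on the same second-moment identity $\|L_\lambda\|^2 = \E_{x,x'}\exp(A)$ with $A = \lambda^2 n\langle x,x'\rangle^2/(2\|x\|^2\|x'\|^2)$, but handles the passage to the limit through concentration inequalities (Lemmas~\ref{lem:unif-int} and~\ref{lem:ov-conc}) that establish uniform integrability via an $L^{1+\gamma}$ moment bound, together with a truncation at a level $t$. Your route is genuinely different in the key step: rather than concentration plus uniform integrability, you linearize $\exp(\lambda^2 Z^2/(2n))$ via the Gaussian MGF identity $\exp(a^2/2) = \E_G e^{aG}$, swap expectations, and exploit the exact Rademacher MGF $\E e^{sZ} = \cosh(s)^n \le e^{ns^2/2}$ to obtain the clean uniform-in-$n$ bound $\|L_\lambda\|^2 \le (1-\lambda^2)^{-1/2}$, with the matching lower bound following from the CLT and Fatou (or a truncation plus bounded convergence). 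What this buys you is a much shorter and sharper argument — you even get the bound non-asymptotically for every $n$ — but it leans heavily on two Rademacher-specific facts: $\|x\|^2 = n$ exactly, which collapses $A$ to $\lambda^2 Z^2/(2n)$, and $\cosh(s)\le e^{s^2/2}$ with the optimal constant $1/2$. The paper's machinery, by contrast, is built to handle arbitrary sub-Gaussian priors where neither of these simplifications is available, which is why the more cumbersome concentration route is used there. Both are valid; yours is the tighter specialization for the Rademacher case.
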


\noindent The utility of this result stems from the following well-known operational definition for the norm of the likelihood ratio, which is a special case of~\eqref{eq:op-def} below:
\begin{equation}\label{eq:op-def-wig}
\|L_\lambda\| = \sup_f \frac{\EE_{Y \sim \PP_\lambda}[f(Y)]}{\sqrt{\EE_{Y \sim \PP_0}[f(Y)^2]}},
\end{equation}
where the supremum is over all real-valued functions $f \in \cL^2(\PP_0)$. The fact~\eqref{eq:op-def-wig} on its own places some constraints on what the optimal ROC curve can look like. For instance, the relation
\begin{equation}\label{eq:abL}
\frac{\beta^2}{\alpha} + \frac{(1-\beta)^2}{1-\alpha} \le \|L_\lambda\|^2
\end{equation}
must be satisfied by every achievable pair $(\alpha,\beta) \in (0,1)^2$~\cite[Proposition~2.5]{opt-subopt}. This is a weaker condition than the desired one, $\beta \le \phi_\lambda(\alpha)$. However, the information we get from~\eqref{eq:op-def-wig} is more subtle than~\eqref{eq:abL} alone. At this point there remain many viable candidates for the optimal ROC curve --- one of which is $\phi_\lambda$ --- that are incomparable to each other. To illustrate, each $(\alpha,\beta)$ pair satisfying~\eqref{eq:abL} with equality is achieved by one such viable ROC curve, namely a straight line from $(0,0)$ to $(\alpha,\beta)$ followed by a straight line from $(\alpha,\beta)$ to $(1,1)$. However, it is \emph{not} possible that \emph{all} the $(\alpha,\beta)$ pairs satisfying~\eqref{eq:abL} are simultaneously achieved by the true ROC curve, or else we could construct a function $f$ that yields a too-good-to-be-true value for the ratio in~\eqref{eq:op-def-wig}. We will use the positive result on the achievability of $\phi_\lambda$ to disambiguate between these candidate ROC curves and conclude that $\phi_\lambda$ is in fact the true ROC curve.

Now in more detail: as we will show (see Section~\ref{sec:pf-overview}), the fact that $\phi_\lambda$ is achievable allows us to construct a function $f$ that makes the ratio in~\eqref{eq:op-def-wig} asymptotically equal to
\[ \val(\phi_\lambda) := \sqrt{\int_0^1 (\phi_\lambda'(\alpha))^2 \,d\alpha} = (1-\lambda^2)^{-1/4}, \]
saturating~\eqref{eq:L-bound-rad}. Any hypothetical improvement to $\phi_\lambda$, even at a single point, would allow us to construct an even better function $f$, leading to an even larger value for the ratio, contradicting~\eqref{eq:L-bound-rad}. This is formalized in Proposition~\ref{prop:reduction-stat} below and illustrated in Figure~\ref{fig:pf}.

\begin{proposition}[Special case of Proposition~\ref{prop:reduction}]
\label{prop:reduction-stat}
Given sequences of distributions $\cP = \cP_n$ and $\cQ = \cQ_n$ on $\Omega = \Omega_n$, consider testing the null hypothesis $Y \sim \cQ$ against the alternative $Y \sim \cP$. For some $\lambda \in (0,1)$, suppose we have the following positive result: for every $\alpha,\beta \in [0,1]$ with $\alpha \le \beta < \phi_\lambda(\alpha)$, there is a test that asymptotically achieves $(\alpha,\beta)$, in the sense of Definition~\ref{def:achieve}. Suppose further there exists some $(\alpha^*,\beta^*) \in [0,1]^2$ with $\beta^* > \phi_\lambda(\alpha^*)$ that is also asymptotically achievable. Then there exists a function $f = f_n: \Omega_n \to \RR$ such that
\begin{equation}\label{eq:ratio-goal-wig}
\liminf_{n \to \infty} \frac{\EE_{Y \sim \cP}[f(Y)]}{\sqrt{\EE_{Y \sim \cQ}[f(Y)^2]}} > \val(\phi_\lambda).
\end{equation}
\end{proposition}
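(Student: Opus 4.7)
The plan is to use the hypothetical improvement at $(\alpha^*,\beta^*)$ to build an ``enhanced ROC curve'' $\psi$ that strictly dominates $\phi_\lambda$, and then to combine tests tracing $\psi$ into a single statistic $f_n$ whose ratio exceeds $\val(\phi_\lambda)$. First I would let $\psi:[0,1]\to[0,1]$ be the pointwise smallest concave function with $\psi\ge\phi_\lambda$ and $\psi(\alpha^*)\ge\beta^*$; equivalently, $\psi$ traces the upper boundary of the convex hull of $\{(\alpha,\beta):\beta\le\phi_\lambda(\alpha)\}\cup\{(\alpha^*,\beta^*)\}$. By probabilistic mixtures of the tests furnished by the hypothesis (including one at $(\alpha^*,\beta^*)$), every $(\alpha,\beta)$ with $\beta<\psi(\alpha)$ remains asymptotically achievable. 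Since $\beta^*>\phi_\lambda(\alpha^*)$, the function $\psi$ strictly exceeds $\phi_\lambda$ on an open interval around $\alpha^*$, while it coincides with $\phi_\lambda$ near the endpoints---a direct calculation from~\eqref{eq:phi_lambda} gives $\phi_\lambda'(1)=0$, so the line joining $(\alpha^*,\beta^*)$ to $(1,1)$ eventually sits below $\phi_\lambda$, forcing $\psi'(1)=0$ as well. Next I would deduce $\val(\psi)>\val(\phi_\lambda)$ from the Hardy--Littlewood--P\'olya rearrangement inequality: the derivatives $\psi',\phi_\lambda'$ are nonnegative and nonincreasing on $[0,1]$ with common integral $1$, and the pointwise bound $\psi\ge\phi_\lambda$ (together with $\psi(0)=\phi_\lambda(0)=0$) is precisely the majorization $\int_0^\alpha\psi'\ge\int_0^\alpha\phi_\lambda'$; HLP applied to the strictly convex $F(x)=x^2$ yields $\int(\psi')^2>\int(\phi_\lambda')^2$, strictly because $\psi'\not\equiv\phi_\lambda'$ (otherwise $\psi-\phi_\lambda$ would be constant, contradicting $\psi(0)=\phi_\lambda(0)$ and $\psi(\alpha^*)>\phi_\lambda(\alpha^*)$). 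Write $\eta:=\val(\psi)-\val(\phi_\lambda)>0$.

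To construct $f_n$, fix $\varepsilon>0$ small relative to $\eta$ and choose a mesh $0<\alpha_1<\cdots<\alpha_K<1$ so that the Riemann sums below approximate their corresponding integrals against $d(-\psi')$ to within $\varepsilon$; one should take $\alpha_1,\varepsilon\to 0$ at linked rates so as to beat the possible blow-up of $\psi'(0)$. Using the first paragraph, for each $k$ pick a test $t_k=t_k^{(n)}:\Omega_n\to\{\fp,\fq\}$ asymptotically achieving $(\alpha_k,\psi(\alpha_k)-\varepsilon)$, and with weights $w_k:=\psi'(\alpha_{k-1})-\psi'(\alpha_k)\ge 0$ set
\[
f_n(Y)\;:=\;\sum_{k=1}^{K}w_k\,\mathbf{1}[t_k(Y)=\fp].
\]
The numerator $\E_{\cP_n}[f_n]\ge\sum_k w_k(\psi(\alpha_k)-\varepsilon-o(1))$ is, up to an $O(\varepsilon\,\psi'(\alpha_1))$ slack and $o(1)$, a Riemann sum for $\int_0^1\psi\,d(-\psi')$, which integration by parts identifies with $\int_0^1(\psi')^2=\val(\psi)^2$ using $\psi(0)=0$, $\psi'(1)=0$, and the vanishing boundary term $\lim_{\alpha\to 0^+}\psi(\alpha)\psi'(\alpha)=0$ (verifiable directly for~\eqref{eq:phi_lambda}). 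The crucial step for the denominator is that each $\mathbf{1}[t_k=\fp]$ is $\{0,1\}$-valued, so
\[
\E_{\cQ_n}[\mathbf{1}[t_k=\fp]\,\mathbf{1}[t_\ell=\fp]]\;\le\;\min(\alpha_k,\alpha_\ell)+o(1)
\]
\emph{regardless of how the different tests are related}; using $\min(a,b)=\int_0^1\mathbf{1}[\gamma<a]\mathbf{1}[\gamma<b]\,d\gamma$ and swapping sums, $\E_{\cQ_n}[f_n^2]$ is bounded above by a Riemann sum for $\int_0^1(\psi'(\gamma))^2\,d\gamma=\val(\psi)^2$. Together,
\[
\liminf_{n\to\infty}\frac{\E_{\cP_n}[f_n]}{\sqrt{\E_{\cQ_n}[f_n^2]}}\;\ge\;\val(\psi)-O(\varepsilon)\;>\;\val(\phi_\lambda).
\]

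\textbf{Where the work is.} The conceptual key is the trivial correlation bound $\E_{\cQ_n}[\mathbf{1}_A\mathbf{1}_B]\le\min(\cQ_n(A),\cQ_n(B))$, which controls $\E_{\cQ_n}[f_n^2]$ \emph{without any assumption on the joint behavior of the $t_k$}---this is what turns a pure achievability hypothesis into a universal lower bound on the ratio in~\eqref{eq:ratio-goal-wig}, precisely the ``analysis of any test combined with a matching second moment bound'' highlighted in the introduction. I expect the main technical difficulty to be the Riemann-sum bookkeeping: controlling the approximation error uniformly in $n$, balancing $\varepsilon$ against the truncation parameter $\alpha_1$ so that $\varepsilon\cdot\psi'(\alpha_1)\to 0$ despite the singularity $\psi'(0)=+\infty$, and verifying the endpoint behavior of $\psi$ that makes the integration by parts yield exactly $\val(\psi)^2$.
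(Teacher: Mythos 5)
Your proof is correct and takes a genuinely different route from the paper's. Both arguments begin by forming $\psi$, the upper concave envelope of $\phi_\lambda$ and $(\alpha^*,\beta^*)$, but then diverge. To show $\val(\psi)>\val(\phi_\lambda)$ you invoke the Hardy--Littlewood--P\'olya majorization inequality with $F(x)=x^2$, using that the decreasing function $\psi'$ majorizes $\phi_\lambda'$ (equal total integral, dominating partial integrals); the paper's Lemma~\ref{lem:pushout} does the same job by a direct elementary computation on $[A_1,A_2]$ via the auxiliary bound~\eqref{eq:mfM-fact}, which is self-contained but less transparent. More strikingly, your choice of $f$ as the nonnegative weighted sum $\sum_k w_k\One_{t_k=\fp}$ with $w_k=\psi'(\alpha_{k-1})-\psi'(\alpha_k)$, together with the universal correlation bound $\EE_\cQ[\One_A\One_B]\le\min(\cQ(A),\cQ(B))$ --- which imposes no assumption on how the $t_k$ relate to each other --- differs from the paper's construction, which builds $f$ from a lookup table indexed by the full outcome vector $s$ and sandwiches its value using the alternating-sum inequality of Lemma~\ref{lem:alt-sum}. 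Your version makes the mechanism behind $\val(\cdot)$ more transparent and avoids any combinatorial reasoning about how the rejection regions fail to nest. Two small points worth tightening. First, the index $\alpha_0$ in $w_1$ is undefined; you should take $w_1=0$ and drop the first test, which is harmless since $\alpha_1\,\psi'(\alpha_1)^2\le\int_0^{\alpha_1}(\psi')^2\to 0$ as $\alpha_1\to 0$ given $\val(\psi)<\infty$. Second, the boundary term $\lim_{\alpha\to 0^+}\psi(\alpha)\psi'(\alpha)=0$ needed for your integration by parts does not require the explicit formula~\eqref{eq:phi_lambda}: Cauchy--Schwarz gives $\psi(\alpha)\le\sqrt{\alpha}\,\bigl(\int_0^\alpha(\psi')^2\bigr)^{1/2}$, and monotonicity of $\psi'$ gives $\sqrt{\alpha}\,\psi'(\alpha)\le\bigl(\int_0^\alpha(\psi')^2\bigr)^{1/2}$, so $\psi(\alpha)\psi'(\alpha)\le\int_0^\alpha(\psi')^2\to 0$ automatically whenever $\val(\psi)<\infty$; this keeps your argument applicable to general $\phi$ satisfying Assumption~\ref{assum:phi}, as in the paper's more general Proposition~\ref{prop:reduction}.
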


\noindent Our desired conclusion now follows by combining the ingredients above.

\begin{proof}[Proof of Theorem~\ref{thm:main-stat}]
We will apply Proposition~\ref{prop:reduction-stat} with $\cP = \PP_\lambda$ and $\cQ = \PP_0$. The required positive result follows from the LSS analysis of~\cite{weak-wigner}, which is stated formally as Theorem~\ref{thm:positive} in the next section. Assume on the contrary that some $(\alpha^*,\beta^*)$ with $\beta^* > \phi_\lambda(\alpha^*)$ is asymptotically achievable. By Proposition~\ref{prop:reduction-stat} we obtain $f$ such that
\[ \liminf_{n \to \infty} \frac{\EE_{Y \sim \PP_\lambda}[f(Y)]}{\sqrt{\EE_{Y \sim \PP_0}[f(Y)^2]}} > \val(\phi_\lambda), \]
and the computation $\val(\phi_\lambda) = (1-\lambda^2)^{-1/4}$ is Lemma~\ref{lem:calc-val} in the next section. This now contradicts~\eqref{eq:L-bound-rad},\eqref{eq:op-def-wig}.
\end{proof}

The next section contains a more general form of this argument that also considers computational complexity. The general form of Proposition~\ref{prop:reduction-stat} (i.e., Proposition~\ref{prop:reduction}) applies to a wide class of curves $\phi$ rather than simply $\phi_\lambda$. We note, however, that ROC curves of the form $\phi_\lambda$ are ``common'' in that they arise from thresholding a statistic whose limiting distributions under null and alternative are two different Gaussians with the same variance.

For simplicity, we have focused here on recovering the result of~\cite{fund-limits-wigner} for one particular spike prior --- Rademacher. The main obstacle to recovering the full result of~\cite{fund-limits-wigner} for other priors $\pi$ is establishing $\|L_\lambda\| = O(1)$ for all $\lambda$ below the critical threshold $\lambda^*(\pi)$. For some priors this is simply not true and would need to be replaced with a conditional second moment calculation. See~\cite{BMVVX,opt-subopt} for some results of this form, albeit only reaching the sharp threshold $\lambda^*(\pi)$ for certain priors.

\begin{remark}\label{rem:non-GOE}
Theorem~\ref{thm:main-stat} can be generalized to a modified spiked Wigner testing problem where the noise matrix $W$ has diagonal entries $W_{ii} \sim \cN(0,\sigma^2/n)$ instead of $\cN(0,2/n)$ for a constant $\sigma > 0$, and everything else is unchanged. This case is covered by~\cite{fund-limits-wigner}, and our method also extends to this case in a straightforward way. As in the proof of Theorem~\ref{thm:positive}, the results of~\cite{weak-wigner} imply that LSS asymptotically achieves the ROC curve $\phi_{\lambda,\sigma}(\alpha) := 1 - \Phi\left[\Phi^{-1}(1-\alpha) - \sqrt{\mu/2}\right]$ with $\mu := -\log(1-\lambda^2) + \lambda^2(2-\sigma^2)/\sigma^2$. As in the proof of Lemma~\ref{lem:calc-val}, $\val(\phi_{\lambda,\sigma}) = \exp(\mu/4) = (1-\lambda^2)^{-1/4} \exp(\lambda^2(2-\sigma^2)/(4\sigma^2))$. For the analysis of the likelihood ratio $L_{\lambda,\sigma}$, we have the standard formula (see Lemma~1 of~\cite{BMVVX})
\[ \|L_{\lambda,\sigma}\|^2 = \E_{x,x'} \exp\left(\frac{\lambda^2 n}{2} \frac{\langle x,x' \rangle^2}{\|x\|^2 \|x'\|^2} + \lambda^2 n \left(\frac{1}{\sigma^2}-\frac{1}{2}\right) \sum_{i=1}^n \frac{x_i^2 (x_i')^2}{\|x\|^2 \|x'\|^2}\right) \]
where $x$ is i.i.d.\ Rademacher and $x'$ is an independent copy of $x$. This differs from the original second moment $\|L_\lambda\|^2$ due to the additional second term in $\exp(\cdots)$, which (due to the Rademacher prior) is equal to the constant $\lambda^2 (2-\sigma^2)/(2\sigma^2)$, i.e., $\|L_{\lambda,\sigma}\|^2 = \|L_\lambda\|^2 \exp(\lambda^2 (2-\sigma^2)/(2\sigma^2)) \to (1-\lambda^2)^{-1/2} \exp(\lambda^2 (2-\sigma^2)/(2\sigma^2))$. Therefore, the limiting value of $\|L_{\lambda,\sigma}\|$ matches $\val(\phi_{\lambda,\sigma})$. The conclusion for the modified model is identical to Theorem~\ref{thm:main-stat} except the optimal ROC curve is $\phi_{\lambda,\sigma}$ in place of $\phi_\lambda$.
\end{remark}

\tikzset{
    declare function={
        normcdf(\x)=1/(1 + exp(-0.07056*(\x)^3 - 1.5976*(\x)));
    }
}

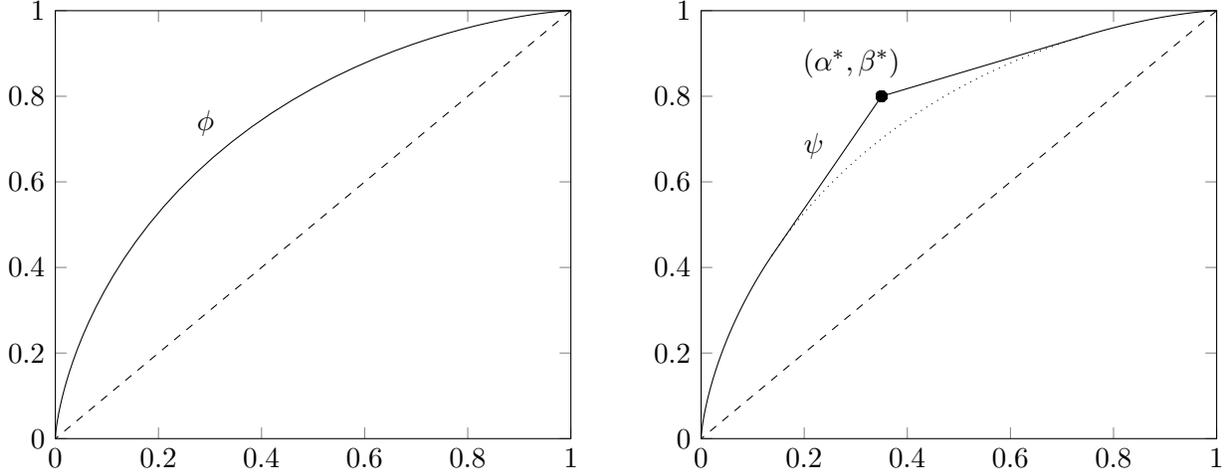
\begin{figure}
\centering

\begin{subfigure}[b]{0.48\textwidth}
    \centering
    \def\L{0.9}
    \def\B{5}
    \def\samp{100}
    \def\m{sqrt(ln(1/(1-(\L)^2))/8)}
    \begin{tikzpicture}
        \begin{axis}[xmin=0,xmax=1,ymin=0,ymax=1,
        legend pos=south east]
        \addplot [mark=none, black, dashed, samples=2] {x};
        \addplot [domain=-\B:\B, samples=\samp, black]({1-normcdf(x+\m)},{1-normcdf(x-\m)});
        \end{axis}
    \node at (2,4.2) {$\phi$};
    \end{tikzpicture}
\end{subfigure}
\hfill
\begin{subfigure}[b]{0.48\textwidth}
    \centering
    \def\L{0.9}
    \def\B{5}
    \def\samp{100}
    \def\aa{0.35}
    \def\bb{0.8}
    \def\tt{-1.1}
    \def\ss{0.65}
    \def\m{sqrt(ln(1/(1-(\L)^2))/8)}
    \begin{tikzpicture}
        \begin{axis}[xmin=0,xmax=1,ymin=0,ymax=1,
        legend pos=south east]
        \addplot [mark=none, black, dashed, samples=2] {x};
        \addplot [domain=\tt:\ss, samples=\samp, black, dotted]({1-normcdf(x+\m)},{1-normcdf(x-\m)});
        \addplot [domain=-\B:\tt, samples=\samp, black]({1-normcdf(x+\m)},{1-normcdf(x-\m)});
        \addplot [domain=\ss:\B, samples=\samp, black]({1-normcdf(x+\m)},{1-normcdf(x-\m)});
        \addplot [only marks, mark=*](\aa,\bb);
        \addplot [domain=0:1, mark=none, black, samples=2]({x*(1-normcdf(\tt+\m))+(1-x)*\aa},{x*(1-normcdf(\tt-\m))+(1-x)*\bb});
        \addplot [domain=0:1, mark=none, black, samples=2]({x*(1-normcdf(\ss+\m))+(1-x)*\aa},{x*(1-normcdf(\ss-\m))+(1-x)*\bb});
        \end{axis}
    \node at (2,5) {$(\alpha^*,\beta^*)$};
    \node at (1.5,3.9) {$\psi$};
    \end{tikzpicture}
\end{subfigure}

\caption{Illustration of the proof of Theorem~\ref{thm:main-stat}. {\bf Left}: The curve $\phi = \phi_\lambda$ for $\lambda = 0.9$. All points $(\alpha,\beta)$ below the curve $\phi$ (and above the ``trivial'' dashed line) are achievable. {\bf Right}: If hypothetically there were an achievable point $(\alpha^*,\beta^*)$ above the curve $\phi$ then every point below $\psi$ --- the upper concave envelope of $\phi$ and $(\alpha^*,\beta^*)$ --- would also be achievable. The improved curve $\psi$ would allow us to construct a function $f$ that makes the ratio in~\eqref{eq:op-def-wig} equal to $\val(\psi)$, which is strictly greater than $\val(\phi_\lambda) = (1-\lambda^2)^{-1/4}$, contradicting~\eqref{eq:L-bound-rad}.}
\label{fig:pf}
\end{figure}

To what extent is our new proof of Theorem~\ref{thm:main-stat} simpler or more broadly applicable than the existing proofs of~\cite{ALR,fund-limits-wigner}? Certainly the second moment bound (Theorem~\ref{thm:L-bound-rad}) is more elementary than the interpolation methods used in~\cite{fund-limits-wigner}, and it also seems more broadly applicable than the combinatorial approach of~\cite{ALR} (see~\cite{opt-subopt}). On the other hand, our argument also relies on the LSS analysis of~\cite{weak-wigner}, which is rather involved (although quite different from~\cite{ALR,fund-limits-wigner}). At the very least, our approach gives the flexibility to trade off one argument for another, and if one is planning to do the LSS analysis anyway then the matching impossibility result is obtained essentially for free. Perhaps the greatest strength of our approach is its ability to generalize to the computationally bounded setting, which we discuss next.

\subsection{Our Approach: Computational Limits}
\label{sec:our-approach-comp}

Building on ideas from the previous section, we now outline our approach for arguing that no (computationally) efficient algorithm can beat the ROC curve of LSS in the spiked Wigner model. As this is a statement about average-case complexity, there will inevitably need to be a conjecture involved (Conjecture~\ref{conj:new}). Our final conclusion, conditional on the conjecture, is Corollary~\ref{cor:main-comp}. The proofs of the results in this section are deferred to Section~\ref{sec:proofs}.

Throughout this section, we consider the spiked Wigner testing problem described in Definition~\ref{def:wigner}: we are testing the null hypothesis $Y \sim \PP_0$ against a specific alternative $Y \sim \PP_\lambda$, in the regime $n \to \infty$ with $\lambda, \pi$ fixed. We place the following mild assumptions on the spike prior $\pi$.

\begin{assumption}\label{assum:pi}
The spike prior $\pi$ is a probability distribution on $\RR$ such that:
\begin{itemize}
\item (Centered) $\EE[\pi] = 0$,
\item (Unit variance) $\EE[\pi^2] = 1$,
\item (Subgaussian) there exists a constant $c > 0$ such that $\EE[\exp(t\pi)] \le \exp(ct^2)$ for all $t \in \RR$. (This is satisfied by any distribution with bounded support, by Hoeffding's lemma.)
\end{itemize}
\end{assumption}

\noindent While we focus on the spiked Wigner model here, it will be clear that our approach can potentially be applied more generally.

\subsubsection{Step 1: Guess the ROC curve and prove a positive result}

The first step in our approach is to ``guess'' the ROC curve that we aim to show is optimal among efficient algorithms. While our final goal is to show a hardness result, it will (perhaps counterintuitively) be important to prove the corresponding positive result, that is, our ROC curve is achievable by an efficient algorithm. In our case, we choose the ROC curve~\eqref{eq:phi_lambda} that is known to be achievable by LSS, which we repeat here for convenience:
\begin{equation}\label{eq:phi_lambda-2}
\phi_\lambda(\alpha) := 1 - \Phi\left[\Phi^{-1}(1-\alpha) - \sqrt{\frac{1}{2} \log\left(\frac{1}{1-\lambda^2}\right)}\right]
\end{equation}
where $\Phi$ denotes the standard normal CDF function, and we take the conventions $\phi_\lambda(0) = 0$, $\phi_\lambda(1) = 1$. We now state the corresponding positive result, which will be extracted from~\cite{weak-wigner} in a straightforward way; the details of the proof are deferred to Appendix~\ref{app:pf-thm-positive}.

\begin{theorem}
\label{thm:positive}
Fix any $\lambda \in (0,1)$, any spike prior $\pi$, and any $\alpha,\beta \in [0,1]$ with $\alpha \le \beta \le \phi_\lambda(\alpha)$. There is a polynomial-time algorithm that asymptotically achieves $(\alpha,\beta)$ for the spiked Wigner testing problem (in the sense of Definition~\ref{def:achieve}).
\end{theorem}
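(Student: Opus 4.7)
The plan is to derive Theorem~\ref{thm:positive} directly from the central limit theorem for the LSS $T_n := \sum_{i=1}^n h_\lambda(\mu_i)$, with $h_\lambda(\mu) = -\log(1 - \lambda\mu + \lambda^2)$ as in~\eqref{eq:lss}, established in~\cite{weak-wigner}. The first step is to extract the following quantitative CLT: there exist deterministic sequences $m_{0,n}, m_{\lambda,n}$ and $\sigma_n > 0$ such that
\begin{equation*}
\frac{T_n - m_{0,n}}{\sigma_n} \xrightarrow{d} \cN(0,1) \text{ under } \PP_0, \qquad \frac{T_n - m_{\lambda,n}}{\sigma_n} \xrightarrow{d} \cN(0,1) \text{ under } \PP_\lambda,
\end{equation*}
with matching limiting variance, and satisfying
\begin{equation*}
\lim_{n \to \infty} \frac{m_{\lambda,n} - m_{0,n}}{\sigma_n} \;=\; \delta_\lambda \;:=\; \sqrt{\tfrac{1}{2} \log\!\bigl(1/(1-\lambda^2)\bigr)}.
\end{equation*}
The function $h_\lambda$ was calibrated in~\cite{weak-wigner} precisely so that this asymptotic effective signal-to-noise ratio $\delta_\lambda$ produces exactly the Gaussian location-family ROC curve $\phi_\lambda$ in~\eqref{eq:phi_lambda-2}.

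Second, given a target type I error $\alpha \in (0,1)$, I would use the threshold test that declares $\fp$ iff $T_n > \tau_n$, where $\tau_n := m_{0,n} + \Phi^{-1}(1-\alpha) \cdot \sigma_n$. Convergence in distribution together with continuity of $\Phi$ give type I error $\PP_0(T_n > \tau_n) \to \alpha$ and power
\begin{equation*}
\PP_\lambda(T_n > \tau_n) \;\to\; 1 - \Phi\!\bigl(\Phi^{-1}(1-\alpha) - \delta_\lambda\bigr) \;=\; \phi_\lambda(\alpha).
\end{equation*}
This achieves every point on the curve $\beta = \phi_\lambda(\alpha)$ with $\alpha \in (0,1)$; the boundary cases $\alpha = 0$ (which forces $\beta = 0$) and $\alpha = 1$ (which forces $\beta = 1$) are handled by the constant tests that always answer $\fq$ or $\fp$ respectively.

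Third, for an interior target $(\alpha,\beta)$ with $\alpha < \beta < \phi_\lambda(\alpha)$, I would randomize independently of $Y$: with probability $p := (\beta - \alpha)/(\phi_\lambda(\alpha) - \alpha) \in (0,1)$ run the threshold test just constructed, and with probability $1-p$ run the data-independent test that outputs $\fp$ with probability $\alpha$. The latter trivially achieves $(\alpha,\alpha)$ under both hypotheses, so by linearity the mixture asymptotically achieves $(\alpha,\, p\,\phi_\lambda(\alpha) + (1-p)\alpha) = (\alpha,\beta)$. The whole procedure runs in polynomial time since the eigenvalues of $Y$ can be computed in $O(n^3)$ and $T_n$ is a simple sum.

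The main obstacle I anticipate is simply pinning down the exact form of the CLT from~\cite{weak-wigner} that I need --- in particular verifying (i)~that the limiting variances under $\PP_0$ and $\PP_\lambda$ coincide (this is what turns the ROC into a pure Gaussian location curve $\phi_\lambda$ rather than something messier, and reflects the specific choice of $h_\lambda$), and (ii)~that the CLT is valid for arbitrary centered unit-variance $\pi$ rather than only special priors. Once that input is in hand, the threshold-plus-randomize reduction above is routine.
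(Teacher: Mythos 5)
Your proposal takes essentially the same approach as the paper: cite the CLT for the LSS statistic from~\cite{weak-wigner} (the two concerns you flag resolve affirmatively — the null and alternative limiting variances both equal $-2\log(1-\lambda^2)$, and the result is prior-universal because the spike $xx^\top/\|x\|^2$ is a rank-1 projection so the spectrum's law does not depend on $\pi$; the paper only needs a small $o(1)$ correction for the event $\|x\|=0$), then threshold to sweep out the curve $\phi_\lambda$. One small note: the randomization step for $\beta < \phi_\lambda(\alpha)$ is unnecessary, since Definition~\ref{def:achieve} uses one-sided inequalities (size $\le \alpha + o(1)$, power $\ge \beta - o(1)$), so a test hitting a point on the curve automatically achieves every point below it.
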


We have defined spike priors to have mean 0 and variance 1 throughout the paper, but Theorem~\ref{thm:positive} holds with no assumptions on $\pi$ (as long as $x \ne 0$ with probability $1-o(1)$). Due to GOE rotational invariance, the LSS test is not affected by the direction of the spike.

\subsubsection{Step 2: Matching bound on $\|L^{\le D}\|$}

A key step in our approach is to establish a sharp bound on the norm of the low-degree likelihood ratio (defined in Section~\ref{sec:background}) that matches the ``value'' of our ROC curve, defined as follows.

\begin{definition}
For a function $\phi: [0,1] \to [0,1]$ whose derivative $\phi'$ exists on $(0,1)$ except at finitely many points, define
\begin{equation}\label{eq:def-val}
\val(\phi) = \sqrt{\int_0^1 (\phi'(\alpha))^2 \,d\alpha},
\end{equation}
provided the (improper Riemann) integral exists.
\end{definition}

\noindent The intuition for this value is the following fact, which will be implicit in the proof of Proposition~\ref{prop:reduction} and is discussed further in Section~\ref{sec:pf-overview}: if some (concave) $\phi$ is achievable then we can construct a function $f$ that makes a particular ratio --- namely $R_\lambda(f)$ defined in~\eqref{eq:def-R} below --- equal to $\val(\phi)$.

\begin{lemma}\label{lem:calc-val}
For the function $\phi_\lambda$ defined in~\eqref{eq:phi_lambda-2},
\[ \val(\phi_\lambda) = (1-\lambda^2)^{-1/4}. \]
\end{lemma}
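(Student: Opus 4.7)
The plan is to directly compute the integral $\int_0^1 (\phi_\lambda'(\alpha))^2\,d\alpha$ via the natural change of variables $u = \Phi^{-1}(1-\alpha)$, which pushes everything onto the real line and turns the integrand into a rescaled Gaussian density. Write $\mu := \sqrt{\tfrac12 \log(1/(1-\lambda^2))}$ so that $\phi_\lambda(\alpha) = 1 - \Phi(\Phi^{-1}(1-\alpha) - \mu)$ and $e^{\mu^2/2} = (1-\lambda^2)^{-1/4}$; the goal then becomes showing $\int_0^1(\phi_\lambda'(\alpha))^2\,d\alpha = e^{\mu^2}$.

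First I would compute $\phi_\lambda'$ by the chain rule. With $u = \Phi^{-1}(1-\alpha)$, we have $du/d\alpha = -1/\varphi(u)$ (where $\varphi$ is the standard normal density), and so
\[
\phi_\lambda'(\alpha) = -\varphi(u-\mu)\cdot\frac{du}{d\alpha} = \frac{\varphi(u-\mu)}{\varphi(u)}.
\]
The change of variables then gives $d\alpha = -\varphi(u)\,du$, and the limits $\alpha\in(0,1)$ become $u\in(\infty,-\infty)$, so
\[
\int_0^1 (\phi_\lambda'(\alpha))^2\,d\alpha = \int_{-\infty}^{\infty}\frac{\varphi(u-\mu)^2}{\varphi(u)}\,du.
\]

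Next I would plug in $\varphi(x) = (2\pi)^{-1/2} e^{-x^2/2}$ and simplify the exponent in the integrand. One finds
\[
\frac{\varphi(u-\mu)^2}{\varphi(u)} = \frac{1}{\sqrt{2\pi}}\exp\!\left(-(u-\mu)^2 + \tfrac12 u^2\right),
\]
and completing the square in $u$ rewrites the exponent as $-\tfrac12(u-2\mu)^2 + \mu^2$. The integral of $(2\pi)^{-1/2}\exp(-\tfrac12(u-2\mu)^2)$ over $\mathbb{R}$ is $1$, so the whole integral equals $e^{\mu^2}$. Taking the square root gives $\val(\phi_\lambda) = e^{\mu^2/2} = (1-\lambda^2)^{-1/4}$.

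There is no real obstacle here — the lemma is essentially a Gaussian-moment computation dressed up in ROC notation — but the one subtle point worth double-checking is the justification for differentiating across $\alpha = 0$ and $\alpha = 1$ (where $\Phi^{-1}$ diverges) and the convergence of the resulting improper integral. Both are handled automatically by the change of variables: the integrand on the $u$-side is a genuine $L^1$ function on $\mathbb{R}$ (a shifted Gaussian times $e^{\mu^2}$), and $\phi_\lambda$ is smooth on $(0,1)$ with the endpoint values $\phi_\lambda(0)=0$, $\phi_\lambda(1)=1$ matching the convention used in Definition of $\val$. This resolves the improper integral cleanly and completes the proof.
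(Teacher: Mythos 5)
Your proof is correct and takes essentially the same route as the paper: both reparametrize the integral by a linear function of $\Phi^{-1}(1-\alpha)$ (you use $u = \Phi^{-1}(1-\alpha)$; the paper uses the threshold $\tau = u - \sqrt{\mu/8}$ from its parametric form of the ROC curve), obtain a Gaussian integrand, and finish by completing the square. The only cosmetic difference is the choice of shift in the substitution variable.
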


\noindent The details of the above calculation are deferred to Appendix~\ref{app:pf-lem-calc-val}.

\begin{theorem}\label{thm:L-bound}
Fix any $\lambda \in (0,1)$ and any $\pi$ satisfying Assumption~\ref{assum:pi}. Let $L_\lambda = L_{\lambda,\pi,n}$ denote the spiked Wigner likelihood ratio $d\PP_\lambda/d\PP_0$. Let $D = D_n$ be any sequence satisfying $D = \omega(1)$ and $D = o(n/\log n)$. The $\cL^2(\PP_0)$-norm of the degree-$D$ likelihood ratio (as defined in Section~\ref{sec:background}) satisfies
\[ \lim_{n \to \infty} \|(L_\lambda)^{\le D}\| = (1-\lambda^2)^{-1/4}. \]
\end{theorem}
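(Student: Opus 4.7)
The plan is to derive an explicit formula for $\|(L_\lambda)^{\le D}\|^2$ via Hermite polynomial orthogonality, and then establish matching lower and upper bounds. Starting from $L_\lambda(Y) = \E_{x \sim \pi^{\otimes n}} \exp\bigl((n\lambda/2)\langle Y, \hat x \hat x^\top\rangle - n\lambda^2/4\bigr)$ with $\hat x := x/\|x\|$, one observes that $U_x := (n\lambda/2)\langle Y, \hat x \hat x^\top\rangle$ is linear in $Y$ and Gaussian with variance $\sigma^2 = n\lambda^2/2$ under $Y \sim \PP_0$. The Hermite generating function $e^{u - \sigma^2/2} = \sum_{k \ge 0} \sigma^k \mathrm{He}_k(u/\sigma)/k!$ expands $L_\lambda$ as an $x$-average of a Hermite series whose $k$-th term is a polynomial of degree exactly $k$ in $Y$; truncating to polynomial degree $D$ retains precisely $k \le D$. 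Squaring, applying Fubini, and using the orthogonality $\E[\mathrm{He}_j(G_1)\mathrm{He}_k(G_2)] = k! \rho^k \delta_{jk}$ for jointly Gaussian $(G_1,G_2)$ with $\rho = \mathrm{Corr}(U_x/\sigma,U_{x'}/\sigma) = \langle \hat x, \hat x'\rangle^2$ yields the master identity
\[ \|(L_\lambda)^{\le D}\|^2 \;=\; \E_{x,x' \stackrel{\mathrm{iid}}{\sim} \pi^{\otimes n}} \left[\,\sum_{k=0}^{D} \frac{1}{k!}\!\left(\frac{\lambda^2 Z_n^2}{2}\right)^{\!k}\,\right],\qquad Z_n := \sqrt n \,\langle \hat x, \hat x'\rangle, \]
with $Z_n \Rightarrow \cN(0,1)$ by the CLT applied to $\sum_i x_i x_i'/\sqrt n$ combined with $\|x\|^2/n \to 1$.

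For the lower bound, drop all but the first $K+1$ terms for any fixed $K$. Subgaussianity of $\pi$ provides uniform integrability of $\{Z_n^{2k}\}_n$ for each fixed $k$, so $\E[Z_n^{2k}] \to (2k-1)!!$, and the truncated sum converges to $\sum_{k \le K}\binom{2k}{k}(\lambda^2/4)^k$; sending $K \to \infty$ gives $\liminf_n \|(L_\lambda)^{\le D}\|^2 \ge (1-\lambda^2)^{-1/2}$.

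The upper bound is the main obstacle, and this is where the hypothesis $D = o(n/\log n)$ does real work. The trivial bound $\sum_{k \le D} y^k/k! \le e^y$ does not suffice: for generic subgaussian $\pi$ (already including bounded sparse priors) the full limit $\lim_n \E[\exp(\lambda^2 Z_n^2/2)]$ is infinite, driven by rare large-overlap events, so the degree truncation must actively suppress those tails. The approach is to use the Poisson identity $\sum_{k \le D} y^k/k! = e^y \Pr[N_y \le D]$ with $N_y \sim \mathrm{Poisson}(y)$, together with the Chernoff bound $\Pr[N_y \le D] \le (y/D)^D e^{D-y}$ for $y \ge D$, giving the improved envelope $\sum_{k \le D} y^k/k! \le (ey/D)^D$ on the tail. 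Split the expectation by the typical event $\{|Z_n| \le T\}$: for fixed $T$ and $D = \omega(1)$, on the typical event $y = \lambda^2 Z_n^2/2$ is bounded and the partial sum converges uniformly to $e^y$, so weak convergence $Z_n \Rightarrow \cN(0,1)$ applied to the bounded continuous functional $z \mapsto e^{\lambda^2 z^2/2}\mathbf{1}_{|z|\le T}$ gives a contribution converging to $\E[e^{\lambda^2 Z^2/2}\mathbf{1}_{|Z|\le T}]$, which tends to $(1-\lambda^2)^{-1/2}$ as $T \to \infty$; the atypical contribution $\E[(e\lambda^2 Z_n^2/(2D))^D \mathbf{1}_{|Z_n| > T}]$ is controlled via the subgaussian/subexponential tails of $Z_n$ at scale up to $\sqrt n$ (inherited via Bernstein's inequality from the sum of i.i.d.\ mean-zero subexponentials $x_i x_i'$). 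The condition $D = o(n/\log n)$ is sharp for the atypical piece: at the maximum scale $|Z_n| \sim \sqrt n$ one has $(ey/D)^D = \exp(D \log(n/D)) = \exp(o(n))$ precisely when $D = o(n/\log n)$, which is then dominated by the $\exp(-\Omega(n))$ subgaussian tail at that scale. I expect the most delicate bookkeeping to occur in the moderate-deviation band $Z_n^2 \sim D$, where neither the $e^y$ nor the $(ey/D)^D$ bound is obviously tight, to be handled by dyadic decomposition into sub-bands matched to the subgaussian tail of $Z_n$.
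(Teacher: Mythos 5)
Your route is fundamentally sound and arrives at the same master identity as the paper (the paper cites it from Theorem~2.6 of~\cite{ld-notes}, whose proof is exactly the Hermite expansion you redo, up to the normalization by $\|x\|$). Your lower bound---fix $K$, drop all but the first $K+1$ terms, pass to the limit via moment convergence $\E Z_n^{2k} \to (2k-1)!!$, sum the series $\sum_k \binom{2k}{k}(\lambda^2/4)^k$, and then let $K \to \infty$---is a clean alternative to the paper's, which instead picks a slowly growing truncation level $t = o(D)$, bounds the Taylor tail $\sum_{d>D}t^d/d!$, and reuses the same uniform-integrability lemma as the upper bound. For the upper bound the two proofs are organized differently. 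The paper splits at a single level $A := \lambda^2 Z_n^2/2 = \rho n$: below $\rho n$ it bounds $\exp^{\le D}(A) \le \exp(A)$ and shows the correct limit via one uniform-integrability lemma driven by the Gaussian-like tail $\Pr\{A \ge s\} \le Ce^{-(1-\eta)s/\lambda^2}$ valid on $s \le \rho n$ (here $1/\lambda^2 > 1$ is what wins, so no auxiliary cutoff $T$ and no double limit is needed); above $\rho n$ it combines $\Pr\{A > \rho n\} = e^{-\Omega(n)}$ with the crude deterministic cap $\exp^{\le D}(A) \le (D+1)n^D$, for which $D = o(n/\log n)$ is exactly what is required. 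You split at a fixed $T$ and invoke a Poisson--Chernoff envelope instead.

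Here is the genuine gap in what you wrote. The bound $\sum_{k\le D}y^k/k! \le (ey/D)^D$ holds only for $y \ge D$; for $y < D$ it fails (indeed $1+\log s \le s$ gives $(ey/D)^D \le e^y$ there). Since $D = \omega(1)$ and $T$ is fixed, for large $n$ the annulus $T < |Z_n| < \sqrt{2D/\lambda^2}$ is nonempty and has $y < D$, so the quantity $\E[(e\lambda^2 Z_n^2/(2D))^D \mathbf{1}_{|Z_n| > T}]$ you propose to control is not an upper bound on the atypical contribution. You flag the band $Z_n^2 \sim D$ as needing ``delicate bookkeeping,'' but the fix is simpler than a dyadic decomposition: on $T < |Z_n| \le \delta\sqrt n$ (the range where Lemma~\ref{lem:ov-conc}-type tail bounds on $Z_n$ are available) just use $e^y = e^{\lambda^2 Z_n^2/2}$ together with $\Pr\{|Z_n| \ge z\} \le C e^{-(1-\eta)z^2/2}$ with $\eta < 1-\lambda^2$, so the net rate $1-\eta-\lambda^2 > 0$ makes the contribution $O(e^{-cT^2})$ uniformly in $n$; reserve the Poisson envelope (or the cruder $(D+1)n^D$) only for $|Z_n| > \delta\sqrt n$, where the event probability is $e^{-\Omega(n)}$. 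With that patched in, your argument goes through and is essentially a reorganization of the paper's. One small overstatement to repair: the full second moment $\lim_n \E e^{\lambda^2 Z_n^2/2}$ is \emph{not} infinite for generic subgaussian $\pi$---for the Rademacher prior it equals $(1-\lambda^2)^{-1/2}$, which is exactly Theorem~\ref{thm:L-bound-rad}. It diverges only for priors with $\lambda^*(\pi) < 1$ (e.g., sufficiently sparse ones), and that is indeed the regime in which the degree truncation does real work.
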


\noindent Crucially, the value on the right-hand side above matches $\val(\phi_\lambda)$. The assumption $D = \omega(1)$ is only needed for the lower bound, i.e., the upper bound $\|(L_\lambda)^{\le D}\| \le (1-\lambda^2)^{-1/4} + o(1)$ still holds without this assumption. The proof of Theorem~\ref{thm:L-bound} builds on some existing tools for similar second moment calculations~\cite{opt-subopt,sk-cert,ld-notes,spectral-planting,fp}.

Theorem~\ref{thm:L-bound} gives a surprisingly sharp phase transition: for a fixed $\lambda \in (0,1)$, the limiting value of $\|(L_\lambda)^{\le D}\|$ has no dependence on $D$ when $D$ ranges between $\omega(1)$ and $o(n/\log n)$. Once $D$ reaches order $n$, the value of $\|(L_\lambda)^{\le D}\|$ can (for some priors) suddenly jump to size $\exp(\Omega(n))$ as discussed in Remark~\ref{rem:large-D} below. This behavior is consistent with the fact that we do not know any algorithm of subexponential time $\exp(O(n^\delta))$ for a constant $\delta < 1$ that beats the ROC curve of LSS, but some priors do admit such an algorithm of exponential time $\exp(O(n))$.

\begin{remark}\label{rem:large-D}
Regarding the condition $D=o(n/\log n)$ in Theorem~\ref{thm:L-bound}: while the log factor might be an artifact of the proof, it is necessary to assume $D = o(n)$, or else $\|(L_\lambda)^{\le D}\|$ can diverge to infinity (for some choices of $\lambda \in (0,1)$ and $\pi$). To see this, take $\pi$ to be the sparse Rademacher prior which takes values $\pm 1/\sqrt{\rho}$ each with probability $\rho/2$ and value $0$ with probability $1-\rho$, for a small constant $\rho > 0$ to be chosen later. With $x \in \RR^n$ having entries i.i.d.\ from $\pi$ and $x'$ an independent copy of $x$, a standard formula~\eqref{eq:L-wigner} gives
\[ \|(L_\lambda)^{\le D}\|^2 = \sum_{d=0}^D \frac{1}{d!} \E_{x,x'} \left[\left(\frac{\lambda^2 n \langle x,x' \rangle^2}{2 \|x\|^2 \|x'\|^2}\right)^d\right] \ge \frac{1}{D!} \Pr\{x=x' \ne 0\} \left(\frac{\lambda^2 n}{2}\right)^D. \]
Using the bounds $\Pr\{x=x'\ne 0\} \ge \Pr\{x=x'=e_1/\sqrt{\rho}\} = (\rho/2)^2 (1-\rho)^{2(n-1)}$ and $D! \le D^D$, the above is at least $\exp[D \log(\lambda^2 n/(2D)) + 2n \log(1-\rho) + 2 \log (\rho/2)]$. Set $D = \lfloor \epsilon n \rfloor$ for a constant $\epsilon > 0$ small enough so that $\lambda^2/(2\epsilon) > 1$. Now choosing $\rho$ small enough (depending on $\lambda,\epsilon$), we have $\|(L_\lambda)^{\le D}\|^2 = \exp(\Omega(n))$.
\end{remark}

\subsubsection{Step 3: Strengthening of the low-degree conjecture}

As discussed in Section~\ref{sec:background}, the low-degree conjecture posits that (for certain testing problems) low-degree polynomials are at least as powerful as all algorithms of the corresponding runtime (where the correspondence is described in Conjecture~\ref{conj:ld}). For instance, in the spiked Wigner model with $\lambda < 1$, we have from Theorem~\ref{thm:L-bound} that $\|(L_\lambda)^{\le D}\| = O(1)$ for any $D = o(n/\log n)$, and we consider this to be evidence that strong detection requires runtime $\exp(n^{1-o(1)})$. It is well known (see~\cite{hopkins-thesis,ld-notes}) that the norm of the low-degree likelihood ratio admits the operational definition
\begin{equation}\label{eq:op-def}
\left\|\left(\frac{d\cP}{d\cQ}\right)^{\le D}\right\| = \sup_{f \,:\, \deg(f) \le D} \frac{\EE_{Y \sim \cP}[f(Y)]}{\sqrt{\EE_{Y \sim \cQ}[f(Y)^2]}},
\end{equation}
where the supremum is over polynomials $f: \Omega \to \RR$ of degree at most $D$. In fact, the supremum is achieved and the optimizer is $f = (d\cP/d\cQ)^{\le D}$. For our purposes, we introduce a natural refinement of the low-degree conjecture: we posit that low-degree polynomials perform at least as well as all algorithms of the corresponding runtime \emph{in terms of the value of the ratio} on the right-hand side of~\eqref{eq:op-def}. In the spiked Wigner case, this ratio is
\begin{equation}\label{eq:def-R}
R_\lambda(f) := \frac{\EE_{Y \sim \PP_\lambda}[f(Y)]}{\sqrt{\EE_{Y \sim \PP_0}[f(Y)^2]}}.
\end{equation}

\begin{remark}[Update on the conjecture]
The original version of this work contained a different conjecture, which has since been refuted by Ansh Nagda. The conjecture below has been modified to survive Nagda's counterexample. Our results continue to hold under this new conjecture. See Appendix~\ref{app:update} for further details.
\end{remark}

\begin{conjecture}\label{conj:new}
Consider the spiked Wigner testing problem (Definition~\ref{def:wigner}). Fix any $0 < \delta_1 < \delta_2$, any $\lambda \in (0,1)$, and any $\pi$ satisfying Assumption~\ref{assum:pi}. Consider any sequence of functions $f = f_n$ with the following properties:
\begin{itemize}
\item $f$ is computable in time $\exp(O(n^{\delta_1}))$, and
\item there exists a constant $B \ge 1$ (not depending on $n$) such that $f(Y) \in [1/B,B]$ for all $Y$.
\end{itemize}
Then,
\begin{equation}\label{eq:conj-ratio}
\limsup_{n \to \infty} R_\lambda(f) \le \limsup_{n \to \infty} \sup_{g \,:\, \deg(g) \le n^{\delta_2}} R_\lambda(g).
\end{equation}
\end{conjecture}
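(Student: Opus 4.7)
The statement is a conjecture rather than a theorem, so I cannot hope to give an actual proof, but I can sketch what a proof \emph{would} need to do and what partial evidence one could realistically assemble. The first step is to identify the target more concretely. By the operational definition~\eqref{eq:op-def},
\[ \sup_{g \,:\, \deg(g) \le n^{\delta_2}} R_\lambda(g) = \|(L_\lambda)^{\le n^{\delta_2}}\|, \]
and Theorem~\ref{thm:L-bound} tells us this quantity tends to $(1-\lambda^2)^{-1/4}$ for any $\delta_2 \in (0,1)$. So Conjecture~\ref{conj:new} is equivalent to the claim: no sequence $f_n$ computable in time $\exp(O(n^{\delta_1}))$ satisfies $\limsup_n R_\lambda(f_n) > (1-\lambda^2)^{-1/4}$. (For $\delta_2 \ge 1$ the bound can only get weaker, so $\delta_2 < 1$ is the informative regime.)

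The natural proof template follows the standard low-degree heuristic: given $f_n$ computable in time $T = \exp(O(n^{\delta_1}))$, construct a polynomial $g_n$ with $\deg g_n \le n^{\delta_2}$ and $R_\lambda(g_n) \ge R_\lambda(f_n) - o(1)$. Since the folklore correspondence time $T \leftrightarrow$ degree $D$ is $T \sim n^D$ up to polylog factors, one has $D = \tilde O(n^{\delta_1})$, and the conjecture's gap $\delta_2 > \delta_1$ gives comfortable slack. A concrete implementation would decompose $f_n = f_n^{\le D} + f_n^{> D}$ orthogonally in $\cL^2(\PP_0)$ with $D = n^{\delta_2}$, and argue that the high-degree remainder contributes negligibly to the numerator $\EE_{\PP_\lambda}[f_n]$ compared to its contribution to the denominator $\sqrt{\EE_{\PP_0}[f_n^2]}$. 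If this decoupling holds, then $R_\lambda(f_n^{\le D}) \ge R_\lambda(f_n) - o(1)$, and by~\eqref{eq:op-def} the former is at most $\|(L_\lambda)^{\le D}\|$, which converges to the desired bound.

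The hard part, and the reason this remains a conjecture, is justifying the decoupling for arbitrary time-$T$ algorithms. We have no tools to rule out a clever efficient algorithm that concentrates much of its ``signal'' (large $\EE_{\PP_\lambda}[f]$) in its high-degree components while keeping its $\cL^2(\PP_0)$-norm under control. This is precisely the content of the standard low-degree conjecture, which itself has no proof in full generality; Conjecture~\ref{conj:new} inherits all of its difficulty and adds a quantitative refinement on top. Realistically, the evidence one can offer is twofold: (i) verify the conjecture unconditionally for natural algorithm classes where the reduction to low-degree polynomials is transparent, such as spectral methods, AMP, and constant-level SoS; and (ii) check consistency with known upper bounds, noting that LSS achieves the conjectured ratio $(1-\lambda^2)^{-1/4}$ exactly and no better efficient algorithm is known for any $\lambda < 1$. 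A direct proof is out of reach without fundamental advances in average-case complexity, which is why the paper carries Corollary~\ref{cor:main-comp} as a statement explicitly conditional on Conjecture~\ref{conj:new} rather than as a theorem.
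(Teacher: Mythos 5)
You correctly recognize that Conjecture~\ref{conj:new} is a conjecture, not a theorem, and that the paper offers no proof of it; your reduction via~\eqref{eq:op-def} and Theorem~\ref{thm:L-bound} to the concrete bound $\limsup_n R_\lambda(f) \le (1-\lambda^2)^{-1/4}$ matches the paper's own specialization~\eqref{eq:conj-specific}, and your discussion of evidence (LSS realizing the target ratio as a low-degree polynomial, the inherited difficulty from the standard low-degree conjecture) tracks the paper's Remark following the conjecture. This is essentially the same stance the paper takes, so there is nothing to add.
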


\noindent Recalling~\eqref{eq:op-def} and Theorem~\ref{thm:L-bound}, the specific case of interest for us will be the following: for any fixed $\epsilon > 0$, any $f = f_n$ computable in time $\exp(O(n^{1-\epsilon}))$ with bounded outputs must satisfy
\begin{equation}\label{eq:conj-specific}
\limsup_{n \to \infty} R_\lambda(f) \le (1-\lambda^2)^{-1/4}.
\end{equation}

To be conservative, we have stated Conjecture~\ref{conj:new} only for the spiked Wigner model, but one can imagine making similar conjectures in other settings. As with the standard low-degree conjecture we cannot realistically hope to prove Conjecture~\ref{conj:new}, but we can continue to amass evidence in its favor.

\begin{remark}
To justify Conjecture~\ref{conj:new}, one must believe that low-degree polynomials capture the most powerful approaches in our algorithmic toolkit. To this end, it may be instructive to see how the best known efficient algorithm for maximizing $R_\lambda(f)$ --- which uses LSS --- can be implemented as a low-degree polynomial. Recall the LSS statistic $H(Y) := \sum_{i=1}^n h_\lambda(\mu_i)$ from~\eqref{eq:lss}. After appropriate shifting and scaling, this is shown in~\cite{weak-wigner} to converge to a Gaussian limit: $a H(Y) + b \Rightarrow \cN(\pm c,1)$ where the plus sign holds under $\PP_\lambda$ and the minus sign holds under $\PP_0$, and $c := \sqrt{-\log(1-\lambda^2)/8}$. Now let $f(Y) = p(aH(Y)+b)$ and choose the optimal function $p$ to maximize $R_\lambda(f)$, which turns out to be the likelihood ratio between $\cN(c,1)$ and $\cN(-c,1)$, i.e., $p(z) = \exp(2cz)$. This yields $R_\lambda(f) \to (1-\lambda^2)^{-1/4}$, matching~\eqref{eq:conj-specific}. To approximate this with a polynomial, consider polynomial approximations $\hat{h}_\lambda$ and $\hat{p}$ for $h_\lambda$ and $p$, of degrees $D_1$ and $D_2$ respectively. Now $\hat{f}(Y) := \hat{p}(a\sum_i \hat{h}_\lambda(\mu_i) + b) = \hat{p}(a \cdot \Tr(\hat{h}_\lambda(Y)) + b)$ is a polynomial of degree $D = D_1 D_2$ that approximates $f$. We expect that any slowly growing degree $D = \omega(1)$ should suffice to construct such an $\hat{f}$ achieving $R_\lambda(\hat{f}) \to (1-\lambda^2)^{-1/4}$. While we have not attempted to rigorously analyze this polynomial approximation, we do know from Theorem~\ref{thm:L-bound} that \emph{some} degree-$\omega(1)$ polynomial achieves $R_\lambda \to (1-\lambda^2)^{-1/4}$.
\end{remark}

\subsubsection{Steps 1,2,3 are sufficient}

We now argue that the above steps are sufficient to deduce our desired conclusion. The idea is the following, which is similar to Figure~\ref{fig:pf}. Our achievable ROC curve $\phi_\lambda$ allows us to construct an efficiently computable function $f$ that achieves $R_\lambda(f) \approx \val(\phi_\lambda)$. If hypothetically there were an efficient algorithm achieving some point $(\alpha,\beta)$ \emph{above} the curve $\phi_\lambda$, this could be combined with $f$ to produce an efficiently computable $g$ that achieves $R_\lambda(g) > \val(\phi_\lambda)$, contradicting our refined low-degree conjecture.

We will state this part of the argument (Proposition~\ref{prop:reduction}) in high generality so that it can potentially be used for other problems beyond spiked Wigner in the future. We will require some conditions on the ROC curve $\phi$ (which in our case is $\phi_\lambda$). As discussed in Section~\ref{sec:background}, an ROC curve should be increasing and concave with $\phi(1) = 1$. We also impose some additional technical conditions (some of which are likely removable, but they allow for a cleaner proof).

\begin{assumption}\label{assum:phi}
Assume $\phi: [0,1] \to [0,1]$ has the following properties:
\begin{itemize}
\item $\phi(0) = 0$ and $\phi(1) = 1$,
\item $\phi$ is continuous on $[0,1]$ and differentiable on $(0,1)$,
\item $\phi'$ is continuous, strictly positive, and decreasing on $(0,1)$,
\item $\lim_{\alpha \to 0^+} \phi'(\alpha) = +\infty$ and $\lim_{\alpha \to 1^-} \phi'(\alpha) = 0$,
\item the integral~\eqref{eq:def-val} defining $\val(\phi)$ is finite.
\end{itemize}
\end{assumption}

\noindent Some immediate consequences are that $\phi$ is concave and strictly increasing, and $\phi(\alpha) \ge \alpha$ for all $\alpha \in [0,1]$.

\begin{lemma}\label{lem:check-phi-assum}
For any $\lambda \in (0,1)$, the function $\phi_\lambda$ defined in~\eqref{eq:phi_lambda-2} satisfies Assumption~\ref{assum:phi}.
\end{lemma}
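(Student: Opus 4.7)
The plan is a direct verification, where everything follows once we have a clean closed form for $\phi_\lambda'$. Let $c := \sqrt{\tfrac{1}{2}\log(1/(1-\lambda^2))}$, which is a finite positive constant since $\lambda \in (0,1)$; then $\phi_\lambda(\alpha) = 1 - \Phi(\Phi^{-1}(1-\alpha) - c)$ on $(0,1)$.

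First I would dispense with the boundary and regularity conditions. Since $\Phi^{-1}$ is continuous and strictly decreasing on $(0,1)$ with $\Phi^{-1}(1-\alpha) \to +\infty$ as $\alpha \to 0^+$ and $\to -\infty$ as $\alpha \to 1^-$, and since $\Phi$ is continuous on $\RR$ with $\Phi(+\infty) = 1$ and $\Phi(-\infty) = 0$, we immediately get that $\phi_\lambda$ is continuous on $(0,1)$ and extends continuously to the boundary with $\phi_\lambda(0) = 0$ and $\phi_\lambda(1) = 1$, matching the stipulated conventions. Differentiability on $(0,1)$ is also immediate, since both $\Phi^{-1}$ and $\Phi$ are smooth there.

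Next I would compute $\phi_\lambda'$ in closed form. Letting $u = \Phi^{-1}(1-\alpha)$ and $\varphi$ denote the standard normal density, differentiating the identity $1-\alpha = \Phi(u)$ gives $du/d\alpha = -1/\varphi(u)$, and hence
\[
\phi_\lambda'(\alpha) \;=\; -\varphi(u-c)\cdot\frac{du}{d\alpha} \;=\; \frac{\varphi(u-c)}{\varphi(u)} \;=\; \exp\!\bigl(c\, \Phi^{-1}(1-\alpha) - c^2/2\bigr).
\]
From this formula, continuity and strict positivity of $\phi_\lambda'$ on $(0,1)$ are obvious. The derivative is the exponential of a decreasing function (since $c > 0$ and $\Phi^{-1}(1-\alpha)$ is strictly decreasing in $\alpha$), hence strictly decreasing. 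The limiting behavior $\phi_\lambda'(\alpha) \to +\infty$ as $\alpha \to 0^+$ and $\phi_\lambda'(\alpha) \to 0$ as $\alpha \to 1^-$ follows because $\Phi^{-1}(1-\alpha) \to \pm\infty$ at the two endpoints.

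Finally, finiteness of $\val(\phi_\lambda)$ is handled by Lemma~\ref{lem:calc-val}, which asserts $\val(\phi_\lambda) = (1-\lambda^2)^{-1/4} < \infty$; alternatively, substituting $t = \Phi^{-1}(1-\alpha)$ (so $d\alpha = -\varphi(t)\,dt$) reduces $\int_0^1(\phi_\lambda'(\alpha))^2\,d\alpha$ to the elementary Gaussian integral $e^{-c^2}\int_\RR e^{2ct}\varphi(t)\,dt = e^{c^2}$, which is finite. There is no real obstacle here: the main (minor) technical point is just checking the derivative formula and then reading off all five bullets of Assumption~\ref{assum:phi} from it.
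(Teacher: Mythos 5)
Your proof is correct and follows essentially the same route as the paper's: compute $\phi_\lambda'$ in closed form as the exponential of an affine function of $\Phi^{-1}(1-\alpha)$ (the paper parametrizes by $\tau$ via $\Phi^{-1}(1-\alpha) = \tau + \sqrt{\mu/8}$, yielding the identical formula $\phi'_\lambda = \exp(\tau\sqrt{\mu/2})$), then read off continuity, strict positivity, monotonicity, and the boundary limits, and appeal to Lemma~\ref{lem:calc-val} for finiteness of $\val(\phi_\lambda)$. One cosmetic slip: $\Phi^{-1}$ is strictly \emph{increasing}, not decreasing; what you mean, and what the argument actually uses, is that $\alpha \mapsto \Phi^{-1}(1-\alpha)$ is strictly decreasing.
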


\noindent The proof of Lemma~\ref{lem:check-phi-assum} is deferred to Appendix~\ref{app:pf-lem-check-phi-assum}.

The result below refers to a runtime bound $\cT$, which may stand for a class of asymptotic runtimes such as $O(n)$ or $\poly(n)$, or it may be $\infty$ (no bound on runtime). We will elaborate on this further in Remark~\ref{rem:runtime} below.

\begin{proposition}\label{prop:reduction}
Given sequences of distributions $\cP = \cP_n$ and $\cQ = \cQ_n$ on $\Omega = \Omega_n$, consider testing the null hypothesis $Y \sim \cQ$ against the alternative $Y \sim \cP$. Suppose we have a function $\phi$ satisfying Assumption~\ref{assum:phi} and a runtime bound $\cT = \cT_n$ along with the corresponding positive result: for every $\alpha,\beta \in [0,1]$ with $\alpha \le \beta < \phi(\alpha)$, there is an algorithm that asymptotically achieves $(\alpha,\beta)$ in time $\cT$, in the sense of Definition~\ref{def:achieve}. Suppose further there exists some $(\alpha^*,\beta^*) \in [0,1]^2$ with $\beta^* > \phi(\alpha^*)$ that is also asymptotically achievable in time $\cT$. Then there is a function $f = f_n: \Omega_n \to \RR$ computable in time $O(\cT)$ such that
\begin{equation}\label{eq:ratio-goal}
\liminf_{n \to \infty} \frac{\EE_{Y \sim \cP}[f(Y)]}{\sqrt{\EE_{Y \sim \cQ}[f(Y)^2]}} > \val(\phi).
\end{equation}
\end{proposition}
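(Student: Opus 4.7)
The plan is to use the extra achievable point $(\alpha^*,\beta^*)$ to ``lift'' $\phi$ to a strictly better concave ROC curve $\psi$, and then construct $f$ as a weighted combination of a constant number of tests achieving points on $\psi$. Let $\psi$ denote the upper concave envelope of $\phi$ together with the single extra point $(\alpha^*,\beta^*)$. Using the boundary conditions $\phi'(0^+)=+\infty$ and $\phi'(1^-)=0$ from Assumption~\ref{assum:phi}, one checks that there exist $\gamma_1,\gamma_2\in(0,1)$ with $\gamma_1<\alpha^*<\gamma_2$ such that $\psi$ coincides with $\phi$ on $[0,\gamma_1]\cup[\gamma_2,1]$ and, on $[\gamma_1,\gamma_2]$, consists of the two line segments (tangent to $\phi$ at $\gamma_1,\gamma_2$) meeting at $(\alpha^*,\beta^*)$. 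In particular $\psi\ge\phi$ with strict inequality on $(\gamma_1,\gamma_2)$, and every point in $\Delta$ strictly below $\psi$ is asymptotically achievable in time $O(\cT)$: points on or below $\phi$ directly by hypothesis, and points above $\phi$ on $[\gamma_1,\gamma_2]$ via a probabilistic mixture of the test for $(\alpha^*,\beta^*)$ and a test for a suitable point on $\phi$.

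Fix a large constant $K$ and a partition $0<\alpha_1<\cdots<\alpha_K<1$. For each $k$, let $t_k$ be a test computable in time $\cT$ that asymptotically achieves $(\alpha_k,\psi(\alpha_k))$, set $T_k:=\mathbf{1}[t_k(Y)=\fp]$, and define
\[ f := \sum_{k=1}^K w_k T_k, \qquad w_k := \psi'(\alpha_k)-\psi'(\alpha_{k+1}) \ge 0 \]
(with $\alpha_{K+1}:=1$ and $\psi'(1^-):=0$). The numerator satisfies $\E_\cP[f]\ge\sum_k w_k\psi(\alpha_k)-o(1)$. For the denominator, the pointwise bound $T_k T_\ell\le\min(T_k,T_\ell)$ yields $\E_\cQ[T_k T_\ell]\le\min(\alpha_k,\alpha_\ell)+o(1)$; combined with the Brownian-covariance identity $\min(\alpha_k,\alpha_\ell)=\int_0^1\mathbf{1}[\alpha_k\ge s]\mathbf{1}[\alpha_\ell\ge s]\,ds$, this gives
\[ \E_\cQ[f^2] \le \int_0^1 G(s)^2\,ds + o(1), \qquad G(s):=\sum_{k:\alpha_k\ge s}w_k. \]
A telescoping calculation shows $G(s)=\psi'(\alpha_k)$ on $(\alpha_{k-1},\alpha_k]$ (with $\alpha_0:=0$), so $G$ is a non-increasing step-function approximation to $\psi'$; dually, Abel summation gives $\sum_k w_k\psi(\alpha_k)=\int_0^1\psi'(s)G(s)\,ds$ (using $\psi(0)=0$). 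Since $\psi'\in\cL^2([0,1])$, a sufficiently fine constant-size partition makes $\int G^2$ and $\int \psi'G$ both within any prescribed $\delta>0$ of $\val(\psi)^2$, so $\liminf_n R(f)\ge\val(\psi)-O(\delta)$. The runtime is $O(K\cT)=O(\cT)$ because $K$ is constant.

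The remaining step, and the main technical point, is the strict inequality $\val(\psi)>\val(\phi)$. Write $\eta:=\psi-\phi$, which is continuous, nonnegative, supported on $[\gamma_1,\gamma_2]$, vanishes at $\gamma_1$ and $\gamma_2$, and is strictly positive on $(\gamma_1,\gamma_2)$. Expanding gives
\[ \val(\psi)^2-\val(\phi)^2 = 2\int_{\gamma_1}^{\gamma_2}\eta'(\alpha)\phi'(\alpha)\,d\alpha + \int_{\gamma_1}^{\gamma_2}(\eta'(\alpha))^2\,d\alpha. \]
The second term is $\ge 0$. The first is subtle because $\eta'\phi'$ changes sign across $\alpha^*$, so direct positivity is not evident. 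However, Riemann--Stieltjes integration by parts (valid since $\eta$ is absolutely continuous and vanishes at $\gamma_1,\gamma_2$) rewrites it as $-2\int_{\gamma_1}^{\gamma_2}\eta\,d\phi'$. Here $-d\phi'$ is an atomless nonnegative measure (by continuity of $\phi'$) with total mass $\phi'(\gamma_1)-\phi'(\gamma_2)>0$ on $(\gamma_1,\gamma_2)$, while $\eta$ is bounded below by a positive constant on every compact subinterval of $(\gamma_1,\gamma_2)$. Hence the first integral is strictly positive, giving $\val(\psi)>\val(\phi)$; choosing $\delta<\tfrac12(\val(\psi)-\val(\phi))$ in the previous paragraph completes the proof.

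The main obstacle is the strict inequality $\val(\psi)>\val(\phi)$: the naive pointwise expansion of $(\psi')^2-(\phi')^2$ has sign-changing terms that do not manifestly sum to a positive quantity, and it is the integration-by-parts trick combined with strict concavity of $\phi$ that unlocks strict positivity. The rest of the argument --- the envelope construction, the Brownian-covariance upper bound on $\E_\cQ[f^2]$, and the Cauchy--Schwarz-type step-function optimization that shows the resulting ratio tends to $\val(\psi)$ --- is conceptually routine once one notices that $\min(\alpha,\alpha')$ is the covariance kernel of Brownian motion on $[0,1]$.
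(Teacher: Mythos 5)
Your proof is essentially correct and reaches the same conclusion, but it takes a genuinely different route from the paper at the two technical pinch-points. Both arguments begin by passing to the upper concave envelope $\psi$ of $\phi$ and $(\alpha^*,\beta^*)$, and both reduce to showing (a) $\val(\psi)>\val(\phi)$ and (b) an achievable concave curve can be converted into an $f$ with $R(f)$ close to its $\val$. For (a), the paper (Lemma~\ref{lem:pushout}) uses the elementary quadratic inequality $\int_a^b f^2 \le (M+m)\int_a^b f - Mm(b-a)$ for $m \le f \le M$, applied separately on $[A_1,\alpha^*]$ and $[\alpha^*,A_2]$; you instead expand $\val(\psi)^2-\val(\phi)^2 = \int(\eta')^2 + 2\int\eta'\phi'$ with $\eta=\psi-\phi$ and make the sign-changing cross term manifestly positive via Riemann--Stieltjes integration by parts, $2\int\eta'\phi' = -2\int\eta\,d\phi' > 0$; this is a cleaner mechanism and uses the concavity of $\phi$ more transparently. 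For (b), the paper discretizes $\psi$ into points in ``concave position,'' perturbs them to strict achievability (Lemma~\ref{lem:construct-v}), and defines $f$ via a hard-coded lookup table over all $2^r$ test-output patterns, invoking a bespoke alternating-sum inequality (Lemma~\ref{lem:alt-sum}) to control the non-monotone patterns; you instead set $f=\sum_k w_k\mathbf{1}[t_k=\fp]$ with $w_k=\psi'(\alpha_k)-\psi'(\alpha_{k+1})\ge 0$, and bound $\E_\cQ[f^2]$ via the exact identity $T_kT_\ell=\min(T_k,T_\ell)$ together with the integral kernel representation $\min(\alpha_k,\alpha_\ell)=\int_0^1\mathbf{1}[\alpha_k\ge s]\mathbf{1}[\alpha_\ell\ge s]\,ds$, which collapses the double sum to $\int_0^1 G(s)^2\,ds$. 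This entirely sidesteps the combinatorics of non-monotone outcome sequences and replaces the alternating-sum lemma by a one-line computation. The verification that $\int\psi'G$ and $\int G^2$ converge to $\val(\psi)^2$ for fine partitions is by dominated convergence (with dominating function $\psi'\in\cL^2$), which is exactly the kind of discretization the paper carries out more manually in Lemma~\ref{lem:construct-u}.

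Three small gaps to close. First, you take tests achieving $(\alpha_k,\psi(\alpha_k))$ exactly, but the hypothesis only grants points \emph{strictly} below $\phi$ (and hence, after the mixture argument, strictly below $\psi$); you must replace $\psi(\alpha_k)$ by $\psi(\alpha_k)-\delta$ for small $\delta>0$, which only shifts $\E_\cP[f]$ down by $\delta\sum_k w_k = \delta\psi'(\alpha_1)$, a controllable amount --- this is the paper's Lemma~\ref{lem:construct-v}. Second, you implicitly assume $\alpha^*\in(0,1)$ so that the two tangent lines exist; if $\alpha^*=0$ you need to first perturb $(\alpha^*,\beta^*)$ to a nearby achievable point with $\alpha^*>0$ (the paper does this by mixing in a trivial ``always accept'' coin flip). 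Third, you assert $\phi'(\gamma_1)>\phi'(\gamma_2)$ without justification; it holds because otherwise the two tangent lines through $(\alpha^*,\beta^*)$ would coincide, forcing $\phi$ to be linear on $[\gamma_1,\gamma_2]$ and hence $(\alpha^*,\beta^*)$ to lie on $\phi$, a contradiction.
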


\begin{remark}[Runtime]
\label{rem:runtime}
The runtime bound $\cT$ may stand for a class of asymptotic runtimes such as $O(n)$ or $\poly(n)$, or it may be $\infty$ (no bound on runtime). The runtime for the positive result need not be uniform in $\alpha,\beta$, e.g., if $\cT = \poly(n)$ it would be fine to have a different algorithm for each $\alpha,\beta$ with runtime $O(n^{C(\alpha,\beta)})$.

The precise meaning of the runtime $O(\cT)$ for $f$ is as follows. A finite list of $(\alpha,\beta)$ pairs, depending only on $\phi,\alpha^*,\beta^*$ (not $n$), is chosen in advance and hard-coded into the algorithm. The algorithm runs the corresponding tests that asymptotically achieve these points, to get a vector of responses $s \in \{\fp,\fq\}^r$ where $r = r(\phi,\alpha^*,\beta^*)$. Finally, a hard-coded lookup table maps each of the $2^r = O(1)$ possible values for $s$ to a corresponding output $f(Y) > 0$.
\end{remark}

\noindent We now state our conclusion specialized to the case of spiked Wigner: $\phi_\lambda$ is the best possible ROC curve for computationally efficient algorithms.

\begin{corollary}\label{cor:main-comp}
Consider the spiked Wigner testing problem and assume Conjecture~\ref{conj:new}. Fix any $\epsilon > 0$, any $\lambda \in (0,1)$, and any $\pi$ satisfying Assumption~\ref{assum:pi}. Any $(\alpha,\beta) \in [0,1]^2$ with $\beta > \phi_\lambda(\alpha)$ is \emph{not} asymptotically achievable in time $\exp(O(n^{1-\epsilon}))$.
\end{corollary}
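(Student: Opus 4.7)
The plan is to argue by contradiction: assume some $(\alpha^*,\beta^*)$ with $\beta^* > \phi_\lambda(\alpha^*)$ is asymptotically achievable in time $\cT_n := \exp(O(n^{1-\epsilon}))$, then use Proposition~\ref{prop:reduction} to manufacture an efficient function $f$ whose ratio $R_\lambda(f)$ strictly exceeds $(1-\lambda^2)^{-1/4}$, and finally invoke Conjecture~\ref{conj:new} together with Theorem~\ref{thm:L-bound} to derive a contradiction. This is the computational analogue of the argument used for Theorem~\ref{thm:main-stat}, with the Neyman--Pearson-style bound~\eqref{eq:op-def-wig} replaced by its low-degree version~\eqref{eq:op-def}.

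First I would verify the hypotheses of Proposition~\ref{prop:reduction} with $\cP = \PP_\lambda$, $\cQ = \PP_0$, $\phi = \phi_\lambda$, and runtime bound $\cT = \cT_n$. Lemma~\ref{lem:check-phi-assum} shows $\phi_\lambda$ satisfies Assumption~\ref{assum:phi}, and the required positive result — asymptotic achievability of every $(\alpha,\beta)$ with $\alpha \le \beta < \phi_\lambda(\alpha)$ — is precisely Theorem~\ref{thm:positive}, whose polynomial-time algorithms trivially run in time $\cT_n$. Together with the assumed achievability of $(\alpha^*,\beta^*)$, Proposition~\ref{prop:reduction} yields a function $f = f_n$ computable in time $O(\cT_n) = \exp(O(n^{1-\epsilon}))$ satisfying
\[ \liminf_{n \to \infty} R_\lambda(f) \;>\; \val(\phi_\lambda) \;=\; (1-\lambda^2)^{-1/4}, \]
where the equality is Lemma~\ref{lem:calc-val}.

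Next I would set $\delta_1 := 1 - \epsilon$ and choose any $\delta_2 \in (1-\epsilon, 1)$, which is possible since $\epsilon > 0$. With $D_n := \lfloor n^{\delta_2} \rfloor$ we have $D_n = \omega(1)$ (as $\delta_2 > 0$) and $D_n = o(n/\log n)$ (as $\delta_2 < 1$), so Theorem~\ref{thm:L-bound} combined with the operational characterization~\eqref{eq:op-def} of the low-degree likelihood ratio norm gives
\[ \limsup_{n \to \infty} \; \sup_{g \,:\, \deg(g) \le n^{\delta_2}} R_\lambda(g) \;=\; \lim_{n \to \infty} \|(L_\lambda)^{\le D_n}\| \;=\; (1-\lambda^2)^{-1/4}. \]
Since $0 < \delta_1 < \delta_2$ and $f$ is computable in time $\exp(O(n^{\delta_1}))$, Conjecture~\ref{conj:new} then forces $\limsup_n R_\lambda(f) \le (1-\lambda^2)^{-1/4}$, contradicting the strict inequality above.

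No serious obstacle remains at this point; all of the heavy lifting is packaged in the preceding ingredients (Proposition~\ref{prop:reduction}, Theorem~\ref{thm:L-bound}, Theorem~\ref{thm:positive}, Lemma~\ref{lem:calc-val}). The only thing to watch is the bookkeeping of exponents: $\delta_1$ must be large enough to cover the runtime $\exp(O(n^{1-\epsilon}))$ produced by Proposition~\ref{prop:reduction}, while $\delta_2$ must simultaneously satisfy $\delta_2 > \delta_1$ (required by Conjecture~\ref{conj:new}) and $n^{\delta_2} = o(n/\log n)$ (required by Theorem~\ref{thm:L-bound}). Both constraints are met by any $\delta_2 \in (1 - \epsilon, 1)$, which is exactly why the assumption $\epsilon > 0$ appears in the corollary.
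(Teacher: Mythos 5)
Your proposal is correct and follows exactly the paper's proof: apply Proposition~\ref{prop:reduction} with $\cP = \PP_\lambda$, $\cQ = \PP_0$, $\phi = \phi_\lambda$, $\cT = \exp(O(n^{1-\epsilon}))$, obtain $f$ with $\liminf_n R_\lambda(f) > (1-\lambda^2)^{-1/4}$, and contradict Conjecture~\ref{conj:new} plus Theorem~\ref{thm:L-bound} via~\eqref{eq:op-def}. The paper picks $\delta_2 = 1-\epsilon/2$ where you allow any $\delta_2 \in (1-\epsilon,1)$, but this is an inessential difference; the bookkeeping of degrees and runtimes is otherwise identical.
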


\noindent In general, the runtime bound $\exp(O(n^{1-\epsilon}))$ cannot be made any larger: for some $\lambda,\pi$ pairs, strong detection is possible in time $\exp(O(n))$ by directly evaluating the likelihood ratio~\eqref{eq:wig-lr}.

\begin{proof}
We will apply Proposition~\ref{prop:reduction} with $\cP = \PP_\lambda$, $\cQ = \PP_0$, $\phi = \phi_\lambda$ (which satisfies Assumption~\ref{assum:phi} by Lemma~\ref{lem:check-phi-assum}), and $\cT = \exp(O(n^{1-\epsilon}))$. Theorem~\ref{thm:positive} gives the required positive result. Assume on the contrary that some $(\alpha^*,\beta^*)$ with $\beta^* > \phi_\lambda(\alpha^*)$ is asymptotically achievable in time $\exp(O(n^{1-\epsilon}))$. By Proposition~\ref{prop:reduction} we obtain $f$ computable in time $\exp(O(n^{1-\epsilon}))$ such that
\begin{equation}\label{eq:contr-1}
\liminf_{n \to \infty} R_\lambda(f) > \val(\phi_\lambda) = (1-\lambda^2)^{-1/4},
\end{equation}
where the final equality is Lemma~\ref{lem:calc-val}. Furthermore, $f$ outputs values from a finite list of positive constants, meaning $f(Y)$ always lies in the range $[1/B,B]$ for a constant $B \ge 1$; see Remark~\ref{rem:runtime}. On the other hand, Conjecture~\ref{conj:new} implies that $f$ must satisfy
\begin{equation}\label{eq:contr-2}
\limsup_{n \to \infty} R_\lambda(f) \le \limsup_{n \to \infty} \sup_{g \,:\, \deg(g) \le n^{1-\epsilon/2}} R_\lambda(g) = (1-\lambda^2)^{-1/4},
\end{equation}
where the final equality is Theorem~\ref{thm:L-bound} combined with~\eqref{eq:op-def}. Together,~\eqref{eq:contr-1} and~\eqref{eq:contr-2} give a contradiction.
\end{proof}

\subsection{Discussion}

In this work we have presented a general-purpose framework for arguing about the precise testing error achievable by computationally efficient algorithms --- the first such framework, to our knowledge. Our method involves a two-stage argument: we first conjecture that low-degree polynomials are optimal for the related task of optimizing the ratio $R_\lambda(f)$, and we then provably characterize the best possible ROC curve for efficient algorithms conditional on this conjecture.

An alternative approach, and a potential direction for future work, would be to prove unconditionally that a particular class of tests (say, those based on thresholding low-degree polynomials) cannot surpass a particular ROC curve. At this point, even the simpler task of showing that polynomial threshold functions fail to achieve \emph{strong detection} is not fully understood, as the existing results along these lines have technical conditions that are likely unnecessary (see Section~4.1 of~\cite{ld-notes} and Section~2.3 of~\cite{sparse-clustering}).

While we view unconditional results as an important future direction, we also emphasize the merits of our current approach. Notably, the only low-degree calculation required is to bound $\|L^{\le D}\|$, which tends to be relatively simple. This has allowed us to prove a very sharp result that captures the precise threshold at $\lambda = 1$, the precise asymptotic ROC curve $\phi_\lambda$, and the nearly optimal degree $D = o(n/\log n)$. In contrast, other low-degree arguments not based on $\|L^{\le D}\|$ --- such as~\cite{SW-estimation,MW-amp}, which deal with \emph{estimation} rather than testing --- tend to be more difficult, and the known results are less sharp. For this reason, the approach we have presented in this work appears to be an especially user-friendly tool that may be useful for other problems.

We now discuss some specific extensions of our result that appear to be feasible directions for future work. First, we have considered the spiked Wigner model with GOE noise, but one can imagine relaxing the assumptions on the noise matrix $W$. For instance, one non-GOE (but still Gaussian) noise model takes the diagonal entries of $W$ to be $\cN(0,\sigma^2/n)$ instead of $\cN(0,2/n)$, while the off-diagonal entries remain $\cN(0,1/n)$. The analysis of LSS has been extended to this case~\cite{weak-wigner} (with a different ROC curve that depends on $\sigma$), and we have shown in Remark~\ref{rem:non-GOE} how our method can prove \emph{statistical} optimality of LSS when the spike prior is Rademacher. We expect it should also be possible to prove \emph{computational} optimality of LSS for more general priors, generalizing our Corollary~\ref{cor:main-comp}. This would require a probabilistic analysis of the additional second term discussed in Remark~\ref{rem:non-GOE}, which is no longer deterministic for general priors.

A more significant generalization of the Wigner model would allow a known non-Gaussian distribution for the entries of $W$. While LSS has also been analyzed here, it is not the optimal test: a better test involves first applying an entrywise transformation followed by LSS~\cite{weak-wigner}. We expect this modified test to be optimal among computationally efficient tests for a broad class of spike priors, and this can potentially be approached using our method. This would require a precise analysis of the low-degree likelihood ratio in this more general setting.

Throughout, we have assumed the spike prior $\pi$ is subgaussian, and it is plausible that this could be relaxed. Doing so may require a more complicated argument such as a conditional second moment computation. Certainly it is necessary to make some assumption on the growth of $\pi$'s moments or else LSS may cease to be the optimal test: if $\pi$ is so heavy-tailed that the maximum of $n$ independent copies exceeds $\tilde{O}(n^{1/4})$ then the maximum entry of the observed matrix $Y$ will achieve strong detection for all $\lambda > 0$. We also note that our assumption that $\pi$ have mean 0 is needed, or else LSS will be sub-optimal compared to a simple test based on the sum of all entries of $Y$.

We also hope that our approach may become useful beyond the spiked Wigner setting. A key ingredient in our approach is a precise bound on the norm of the low-degree likelihood ratio. The main step in deriving this precise limit is to show boundedness of the norm, as this allows us to deduce the desired convergence in expectation from the corresponding convergence in distribution. There are many settings where this boundedness has already been established, including spiked Wishart matrices~\cite{sk-cert} and the stochastic block model~\cite{HS-bayesian,spectral-planting}, just to name a few. However, the precise positive results for testing do not yet seem to be known in these settings.

\subsection{Proof Overview}
\label{sec:pf-overview}

The bulk of our technical work goes into proving Proposition~\ref{prop:reduction}. A key fact underlying the proof is the following: given an achievable ROC curve $\phi$, one can construct a function $f$ with $\EE_{\cP}[f]/\sqrt{\EE_{\cQ}[f^2]} = \val(\phi)$. We will give an overview of how this is done and where the formula for $\val(\cdot)$ comes from.

For intuition it will help to consider a simplified setting: imagine our ROC curve $\phi$ is achieved by thresholding a particular test statistic that only takes $r$ different values. There are $r+1$ choices for the threshold, yielding $r+1$ different tests. Say test $i$ achieves $(\alpha,\beta) = (a_i,b_i)$ for $i = 0,1,\ldots,r$, where $0 = a_0 < a_1 < \cdots < a_r = 1$ and $0 = b_0 < b_1 < \cdots < b_r = 1$. Also assume that no $(a_i,b_i)$ is dominated by the convex hull of other $(a_j,b_j)$ pairs, or else test $i$ is redundant and can be removed. This means $\phi$ is the concave piecewise linear function formed by connecting each $(a_i,b_i)$ to $(a_{i+1},b_{i+1})$ with a line segment. (These points are achieved by taking appropriate probabilistic combinations between tests $i$ and $i+1$.) Since we're assuming all our tests are thresholding the same underlying statistic, there will be, for any input $Y$, a unique $i^* = i^*(Y) \in \{0,1,\ldots,r-1\}$ such that all tests $i \le i^*$ output $\fq$ and all tests $i > i^*$ output $\fp$. By definition of $a_i,b_i$, the probability that $i^* = i$ is $a_{i+1} - a_i$ under $\cQ$ and $b_{i+1} - b_i$ under $\cP$. Letting $Q^*$ and $P^*$ denote the distributions of $i^*$ under $\cQ$ and $\cP$ respectively, it is a standard fact that the function $f$ maximizing $\EE_{\cP}[f]/\sqrt{\EE_{\cQ}[f^2]}$ (given access to $i^*$) is the likelihood ratio (see e.g.,~\cite[Proposition~1.9]{ld-notes})
\[ f(Y) = \frac{dP^*}{dQ^*}(i^*) = \frac{b_{i^*+1} - b_{i^*}}{a_{i^*+1} - a_{i^*}}, \]
which is the slope of the line connecting $(a_{i^*},b_{i^*})$ and $(a_{i^*+1},b_{i^*+1})$. For this choice of $f$, both $\EE_{\cP}[f]$ and $\EE_{\cQ}[f^2]$ are equal to
\[ \sum_{i=0}^{r-1} \frac{(b_{i+1}-b_i)^2}{a_{i+1}-a_i} = \sum_{i=0}^{r-1} \left(\frac{b_{i+1}-b_i}{a_{i+1}-a_i}\right)^2 (a_{i+1}-a_i) = \val(\phi)^2, \]
and so $\EE_{\cP}[f]/\sqrt{\EE_{\cQ}[f^2]} = \val(\phi)$ as desired.

The full proof of Proposition~\ref{prop:reduction} is more involved for a number of reasons. We do not assume the ROC curve is piecewise linear, but the proof will involve an appropriate discretization. We also do not assume the ROC curve is achieved by thresholding a single statistic at varying thresholds: there may be no relation between the tests that achieve two different $(\alpha,\beta)$ pairs. Finally, we must show that given a hypothetical achievable point $(\alpha^*,\beta^*)$ \emph{above} the graph of $\phi$, we can achieve an improved ROC curve whose value is \emph{strictly} larger than $\val(\phi)$ (see Figure~\ref{fig:pf} for illustration). We note that the value $\val(\phi)$ is only meaningful for concave functions $\phi$.

\section{Proofs}
\label{sec:proofs}

\subsection{Proof of Theorem~\ref{thm:L-bound}}

A standard formula for $\|(L_\lambda)^{\le D}\|$ gives (see Theorem~2.6 of~\cite{ld-notes})
\begin{equation}\label{eq:L-wigner}
\|(L_\lambda)^{\le D}\|^2 = \E_{x,x'} \exp^{\le D}\left(\frac{\lambda^2 n \langle x,x' \rangle^2}{2 \|x\|^2 \|x'\|^2}\right)
\end{equation}
where $x'$ is an independent copy of $x$ (with entries i.i.d.\ from $\pi$), and $\exp^{\le D}(z) := \sum_{d=0}^D z^d/d!$ (the degree-$D$ Taylor polynomial for $\exp(z)$). Define the random variable $A = A_n \ge 0$ by
\begin{equation}\label{eq:def-A}
A := \frac{\lambda^2 n \langle x,x' \rangle^2}{2 \|x\|^2 \|x'\|^2},
\end{equation}
so that
\begin{equation}\label{eq:L-decomp}
\|(L_\lambda)^{\le D}\|^2 = \EE \exp^{\le D}(A) = \EE[\One_{A \le t} \cdot \exp^{\le D}(A)] + \EE[\One_{A > t} \cdot \exp^{\le D}(A)]
\end{equation}
where $t = t_n \ge 0$ is a threshold to be chosen later. The first term on the right-hand side is bounded by
\begin{equation}\label{eq:L-decomp-2}
\EE[\One_{A \le t} \cdot \exp(A)] - \sum_{d=D+1}^\infty \frac{t^d}{d!} \le \EE[\One_{A \le t} \cdot \exp^{\le D}(A)] \le \EE[\One_{A \le t} \cdot \exp(A)],
\end{equation}
where the second inequality relies on $A \ge 0$ so that every term in the Taylor series for exp is non-negative. We will proceed to bound the various terms in~\eqref{eq:L-decomp} and~\eqref{eq:L-decomp-2} separately.

\begin{lemma}\label{lem:L-term-1}
If $t = o(D)$ then
\[ \sum_{d=D+1}^\infty \frac{t^d}{d!} = o(1). \]
\end{lemma}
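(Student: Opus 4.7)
The plan is a routine tail bound on the exponential Taylor series, via Stirling's inequality $d! \ge (d/e)^d$. The intuition is that for $d$ in the tail, the ratio of consecutive terms $\frac{t^{d+1}/(d+1)!}{t^d/d!} = \frac{t}{d+1} \le \frac{t}{D+1}$ is $o(1)$ under the hypothesis $t = o(D)$, so the tail behaves like a geometric series with vanishing ratio, and Stirling lets us make the decay uniform across all $d \ge D+1$ instead of having to iterate.

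Concretely, since $t = o(D)$, for all sufficiently large $n$ we have $2et \le D+1$. Then for every $d \ge D+1$, Stirling gives
\[ \frac{t^d}{d!} \le \left(\frac{et}{d}\right)^d \le \left(\frac{et}{D+1}\right)^d \le 2^{-d}, \]
and summing the resulting geometric series yields
\[ \sum_{d=D+1}^\infty \frac{t^d}{d!} \le \sum_{d=D+1}^\infty 2^{-d} = 2^{-D}, \]
which is $o(1)$ whenever $D \to \infty$. This already covers the case of interest for Theorem~\ref{thm:L-bound}, where $D = \omega(1)$.

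For the lemma to hold literally for any sequence $D = D_n$, the remaining corner case is $D = O(1)$ along a subsequence; but then the hypothesis $t = o(D)$ forces $t \to 0$ along that subsequence, so eventually $t < 1$ and we can directly bound
\[ \sum_{d=D+1}^\infty \frac{t^d}{d!} \le \sum_{d=D+1}^\infty t^d = \frac{t^{D+1}}{1-t} = o(1). \]
There is no real obstacle here: the entire argument is a standard exponential-tail estimate, and the only thing to be careful about is the correct interpretation of the asymptotic assumption $t = o(D)$ when $D$ is permitted to be bounded.
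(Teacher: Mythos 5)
Your proof is correct and uses essentially the same approach as the paper: the Stirling bound $d! \ge (d/e)^d$ followed by a geometric-tail estimate. The paper handles both regimes at once by bounding the tail by the full geometric series $\sum_{d\ge 1}(et/D)^d$, which is $o(1)$ directly since $et/D\to 0$, whereas you separately treat the $D=O(1)$ corner case; the core idea is identical.
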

\begin{proof}
Using the standard bound $d! \ge (d/e)^d$,
\[ \sum_{d=D+1}^\infty \frac{t^d}{d!} \le \sum_{d=D+1}^\infty \left(\frac{et}{d}\right)^d \le \sum_{d=D+1}^\infty \left(\frac{et}{D}\right)^d \le \sum_{d=1}^\infty \left(\frac{et}{D}\right)^d = o(1), \]
since $t=o(D)$.
\end{proof}

\begin{lemma}\label{lem:L-term-2}
If $t=\omega(1)$ and $t \le \rho n$ for a particular constant $\rho = \rho(\lambda,\pi) > 0$, then
\[ \lim_{n \to \infty} \EE[\One_{A \le t} \cdot \exp(A)] = (1-\lambda^2)^{-1/2}. \]
\end{lemma}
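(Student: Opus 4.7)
The plan is to recognize $A = \frac{\lambda^2}{2}Z_n^2$ with $Z_n := \sqrt n\,\langle \hat x,\hat x'\rangle$ (where $\hat x := x/\|x\|$ and $\hat x'$ analogously), so that the claim becomes
\[
\E\bigl[\One_{|Z_n|\le \sqrt{2t_n}/\lambda}\,\exp(\lambda^2 Z_n^2/2)\bigr] \to (1-\lambda^2)^{-1/2}.
\]
I would prove this by sandwiching the truncated expectation between matching lower and upper bounds. The first step is a central limit theorem for $Z_n$: conditional on $x$, $\|x\|^2/n\to 1$ by the LLN and $\max_i|x_i| = O_p(\sqrt{\log n})$ by subgaussianity, so $\hat x$ is a unit vector with $\max_i|\hat x_i|\to 0$; then $\langle \hat x, x'\rangle = \sum_i\hat x_i x_i'$ is a weighted sum of independent subgaussians of total variance $1$ satisfying Lindeberg's condition, converging to $\cN(0,1)$. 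Combined with $\|x'\|/\sqrt n\to 1$ in probability, this gives $Z_n\Rightarrow\cN(0,1)$.

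For the lower bound, fix any $M > 0$. Since $t_n = \omega(1)$, eventually $\lambda^2 M^2/2 \le t_n$, whence $\{A\le t_n\}\supseteq\{|Z_n|\le M\}$. Because $z\mapsto \One_{|z|\le M}\exp(\lambda^2 z^2/2)$ is bounded and continuous off a $\cN(0,1)$-null set, the Portmanteau theorem gives
\[
\liminf_{n\to\infty}\E[\One_{A\le t_n}\exp(A)] \ge \E[\One_{|Z|\le M}\exp(\lambda^2 Z^2/2)],
\]
which tends to $(1-\lambda^2)^{-1/2}$ as $M\to\infty$ by monotone convergence.

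For the upper bound, I would split on the concentration event $G_\delta := \{\|x\|^2/n,\ \|x'\|^2/n \in [1-\delta,\,1+\delta]\}$. Since $x_i^2 - 1$ is centered subexponential, Bernstein's inequality yields $\Pr(G_\delta^c)\le 4e^{-c_1(\delta)n}$ for some $c_1(\delta) > 0$; on $G_\delta^c$ the crude bound $\exp(A)\le e^{t_n}\le e^{\rho n}$ combined with $\rho := c_1(\delta)/2$ makes the bad-event contribution $o(1)$. On $G_\delta$, one has $\|x'\|^2/n\ge 1-\delta$, so $|Z_n|\le|\langle \hat x,x'\rangle|/\sqrt{1-\delta}$. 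A Hanson--Wright-style bilinear tail bound (equivalently a Chernoff argument from the conditional MGF $\E[\exp(t\langle\hat x,x'\rangle)\mid\hat x]$) then yields a subgaussian tail $\Pr(|Z_n|>z,G_\delta)\le 2e^{-c_\star z^2}$ valid over $|z|\le O(\sqrt n)$; integrating this against $\exp(\lambda^2 z^2/2)$ over $M < z\le\sqrt{2t_n}/\lambda$ produces a quantity that is uniformly small as $M\to\infty$, provided $c_\star > \lambda^2/2$. Taking first $n\to\infty$, then $M\to\infty$, then $\delta\to 0$ yields $\limsup_n\E[\One_{A\le t_n}\exp(A)]\le (1-\lambda^2)^{-1/2}$, matching the lower bound.

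The main obstacle is establishing $c_\star > \lambda^2/2$ uniformly in $n$, because the generic subgaussian constant from Assumption~\ref{assum:pi} need not equal the Gaussian exponent $1/2$. The remedy is to exploit $\max_i|\hat x_i|\to 0$ to sharpen the conditional MGF by a second-order Taylor expansion, obtaining $\E[\exp(t\langle\hat x,x'\rangle)\mid\hat x]\le\exp((1+o(1))t^2/2)$ for $|t|\lesssim 1/\max_i|\hat x_i|$; via Chernoff this upgrades the tail bound to $\Pr(|Z_n|>z,G_\delta)\le 2e^{-(1+o(1))z^2/2}$ over a range of $z$ that, for $\rho = \rho(\lambda,\pi)$ chosen sufficiently small, covers the full truncation interval $[M,\sqrt{2\rho n}/\lambda]$. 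It is precisely this requirement that pins down the constant $\rho$ in the hypothesis.
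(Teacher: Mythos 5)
Your high-level plan mirrors the paper's: the paper also reduces to a CLT for the normalized overlap (so $A\Rightarrow\lambda^2\chi_1^2/2$), then controls the error by a sharp subgaussian tail on $|\langle x,x'\rangle|/(\|x\|\|x'\|)$; it merely packages the limit step as ``convergence in distribution $+$ uniform integrability'' (Lemma~\ref{lem:unif-int}, with a $1+\gamma$ moment bound) rather than your Portmanteau sandwich, and it imports the sharp tail from \cite{sk-cert} (Proposition~5.12) and Lemma~\ref{lem:ov-conc} rather than re-deriving it. These two packagings are interchangeable. The genuine issue is in your mechanism for the sharp tail.

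Your remedy---condition on $\hat x$ and Taylor-expand the conditional MGF $\E[\exp(t\langle\hat x,x'\rangle)\mid\hat x]$ to get exponent $(1+o(1))t^2/2$ for $|t|\lesssim 1/\max_i|\hat x_i|$---does not reach the truncation level. Under Assumption~\ref{assum:pi} (subgaussian, not necessarily bounded), $\max_i|\hat x_i|\asymp\sqrt{\log n/n}$, so the admissible $t$ and hence (via Chernoff with $t\approx z$) the admissible $z$ are bounded by $O(\sqrt{n/\log n})$. But the truncation boundary is $\sqrt{2t_n}/\lambda$ and $t_n$ can be as large as $\rho n$, so $z$ ranges up to $\Theta(\sqrt{n})$, and this remains $\Theta(\sqrt n)$ no matter how small the constant $\rho>0$ is chosen---shrinking $\rho$ does not bring $\sqrt{2\rho n}/\lambda$ below $\sqrt{n/\log n}$. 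On the uncovered window $z\in[\sqrt{n/\log n},\,\sqrt{2\rho n}/\lambda]$ your Chernoff bound saturates at $t=\Theta(\sqrt{n/\log n})$ and gives $\Pr(|Z_n|>z)\le e^{-z\sqrt{n/\log n}(1+o(1))+O(n/\log n)}$, which when multiplied by the integrand $\lambda^2ze^{\lambda^2z^2/2}$ produces an exponent of order $+\lambda^2\rho n/2$ at the upper endpoint; the contribution does not vanish. Falling back on a crude Hanson--Wright constant $c_\star$ there is also not available for all $\lambda\in(0,1)$, as you note yourself.

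The fix is to drop the conditioning on $\hat x$: bound $\E[\exp(t\langle x,x'\rangle)]=\big(\E[\exp(tWW')]\big)^n$ with $W,W'\stackrel{\mathrm{iid}}{\sim}\pi$, where a Taylor expansion gives $\E[\exp(tWW')]\le\exp((1+\epsilon)t^2/2)$ for $|t|\le t_0$ with $\epsilon=\epsilon(t_0)\to0$. Chernoff now uses $t=z/\big(\sqrt n(1+\epsilon)\big)$, which is a small constant for all $z\le\delta\sqrt n$, yielding the sharp tail on the full needed range; one then handles the $\|x\|,\|x'\|$ normalizations by a separate Bernstein event, exactly as in Lemma~\ref{lem:ov-conc}. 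Once you replace your conditional MGF step with this unconditional one, the rest of your sandwich argument goes through and is essentially equivalent to the paper's.
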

\begin{proof}
By the central limit theorem, we have convergence in distribution $\langle x,x' \rangle/\sqrt{n} \stackrel{d}{\to} \cN(0,1)$. Also, $\|x\|^2/n \stackrel{d}{\to} 1$ and the same for $x'$. As a result, $A \stackrel{d}{\to} \lambda^2 \chi_1^2/2$ where $\chi_1^2$ is a chi-squared random variable with 1 degree of freedom. Since $t = \omega(1)$, we have $\One_{A \le t} \cdot \exp(A) \stackrel{d}{\to} \exp(\lambda^2 \chi_1^2/2)$. We aim to conclude the corresponding convergence of the expectation $\EE[\One_{A \le t} \cdot \exp(A)] \to \EE[\exp(\lambda^2 \chi_1^2/2)]$. This completes the proof because, using the chi-squared moment-generating function, $\EE[\exp(\lambda^2 \chi_1^2/2)] = (1-\lambda^2)^{-1/2}$.

Convergence in distribution plus uniform integrability implies convergence of the expectation, so it remains to verify that the sequence $X_n := \One_{A_n \le t_n} \cdot \exp(A_n)$ is uniformly integrable, i.e., for every $\epsilon > 0$ there exists $K \ge 0$ such that for all $n$ we have $\EE[\One_{|X_n| \ge K} \cdot |X_n|] \le \epsilon$. Lemma~\ref{lem:unif-int} below establishes for all $n$ that $\EE[|X_n|^{1+\gamma}] \le C$ for some positive constants $\gamma,C$ (depending on $\lambda,\pi$). This implies uniform integrability because
\[ \EE[\One_{|X_n| \ge K} \cdot |X_n|] = \EE[\One_{|X_n| \ge K} \cdot |X_n|^{1+\gamma} |X_n|^{-\gamma}] \le \EE[\One_{|X_n| \ge K} \cdot |X_n|^{1+\gamma} K^{-\gamma}] \le CK^{-\gamma}, \]
which can be made smaller than $\epsilon$ by choosing $K$ sufficiently large.
\end{proof}

We note that the rest of the argument is similar to~\cite[Theorem~3.9]{ld-notes}, with the main difference being that we have normalized the spike (a convention we have adopted for consistency with~\cite{weak-wigner}). We will defer many of the details to Appendix~\ref{app:analysis-ldlr} and state only the main claims here. The following lemma, proved in Appendix~\ref{app:pf-lem-unif-int}, was used to establish uniform integrability above.

\begin{lemma}\label{lem:unif-int}
Fix $\lambda \in (0,1)$ and choose $\gamma > 0$ small enough so that $(1+\gamma)\lambda^2 < 1$. If $t \le \rho n$ for a particular constant $\rho = \rho(\lambda,\pi,\gamma) > 0$ then
\[ \EE[\One_{A \le t} \cdot \exp((1+\gamma)A)] = O(1). \]
\end{lemma}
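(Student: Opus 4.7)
The plan is to decompose $\EE[\One_{A\le t}\exp((1+\gamma)A)]$ across a concentration event for $\|x\|,\|x'\|$, handle the complement using only the hard truncation $A\le t\le\rho n$, and reduce the main contribution to bounding the moment generating function of $U^2 := \langle x,x'\rangle^2/n$ uniformly in $n$.

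First I introduce the event $E_\delta = \{(1-\delta)n \le \|x\|^2,\|x'\|^2 \le (1+\delta)n\}$ for a small constant $\delta > 0$ to be chosen. Since $\pi$ is subgaussian with unit variance, Bernstein/Hanson--Wright-type concentration for $\|x\|^2 = \sum_i x_i^2$ (after a harmless truncation of $x_i^2$ if $\pi$ has unbounded support) gives $\Pr(E_\delta^c)\le 2\exp(-c_\delta n)$ for some $c_\delta=c_\delta(\pi)>0$. Off $E_\delta$ I use only $A\le t \le \rho n$, so that piece is bounded by $2\exp(((1+\gamma)\rho-c_\delta)n)$, which is $o(1)$ provided $\rho\le c_\delta/(2(1+\gamma))$.

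On $E_\delta$, the bound $\|x\|^2\|x'\|^2\ge(1-\delta)^2 n^2$ gives the deterministic estimate
\[ \exp((1+\gamma)A)\,\One_{E_\delta} \le \exp(sU^2), \qquad s := \frac{(1+\gamma)\lambda^2}{2(1-\delta)^2}, \]
and the hypothesis $(1+\gamma)\lambda^2<1$ ensures $2s<1$ for $\delta$ small enough. Since $U=n^{-1/2}\sum_i x_i x'_i$ is a sum of $n$ iid copies of $\pi\tilde\pi/\sqrt n$ (with $\tilde\pi$ an independent copy of $\pi$), its MGF factors as $M_U(\tau)=F(\tau/\sqrt n)^n$ where $F(z):=\EE[\exp(z\pi\tilde\pi)]$; the moment identities $\EE[\pi\tilde\pi]=0$ and $\EE[(\pi\tilde\pi)^2]=1$ give the expansion $\log F(z) = z^2/2 + O(|z|^3)$ near $0$. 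By Hubbard--Stratonovich with an independent $G\sim\cN(0,1)$,
\[ \EE[\exp(sU^2)] = \EE_G\bigl[F(G\sqrt{2s/n})^{n}\bigr], \]
and splitting at $|G|\le L_n := n^{1/7}$ makes the argument of $F$ at most $O(n^{-1/3})$, so the cubic Taylor bound yields $n\log F(G\sqrt{2s/n})\le sG^2+o(1)$; the moderate-$G$ piece is then at most $(1+o(1))(1-2s)^{-1/2}$, which is $O(1)$ since $2s<1$.

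The main obstacle is the large-$|G|$ tail $|G|>L_n$: the subgaussian parameter of $\pi\tilde\pi$ can exceed $1$, so the naive bound $F(z)\le\exp(C_\pi z^2)$ need not make the Gaussian $G$-integral converge on its own. Here I will further split into an intermediate piece $L_n<|G|\le\eta\sqrt n$, handled by a Cram\'er-type moderate-deviation bound that recovers a near-Gaussian estimate $n\log F(G\sqrt{2s/n})\le sG^2(1+o_\eta(1))$, and a far tail $|G|>\eta\sqrt n$, where the Gaussian bound $\Pr(|G|>\eta\sqrt n)\le\exp(-\eta^2 n/2)$ combines with the crude bound $M_U(G\sqrt{2s})\le\exp(C_\pi' G^2)$ to give a contribution of order $\exp((\text{const}\cdot\rho-\eta^2/2)n)$, which is $o(1)$ once $\rho$ is chosen small enough in terms of $\lambda,\pi,\gamma,\eta$. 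Calibrating $\delta,L_n,\eta,\rho$ so that all pieces are simultaneously $O(1)$ is the delicate part of the argument, and this is where the freedom to let $\rho$ depend on $\lambda,\pi,\gamma$ becomes essential.
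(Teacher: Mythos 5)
Your decomposition onto the concentration event $E_\delta$ is sound, as is the deterministic bound $\One_{E_\delta}\exp((1+\gamma)A)\le\exp(sU^2)$ with $s=\frac{(1+\gamma)\lambda^2}{2(1-\delta)^2}$. The gap opens the moment you pass from this to the unrestricted expectation $\EE[\exp(sU^2)]$ in order to apply Hubbard--Stratonovich: dropping both $\One_{E_\delta}$ and the truncation $\One_{A\le t}$ destroys exactly the control you need. The quantity $\EE[\exp(sU^2)]$ can be $+\infty$ for admissible priors. Take $\pi=\cN(0,1)$ (which satisfies Assumption~\ref{assum:pi}): conditioned on $x$, $U\mid x\sim\cN(0,\|x\|^2/n)$, so $\EE[\exp(sU^2)\mid x]=+\infty$ whenever $\|x\|^2>n/(2s)$, an event of positive probability, hence $\EE[\exp(sU^2)]=+\infty$ for every $s>0$. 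More generally, $\pi\tilde\pi$ is only subexponential (not subgaussian) when $\pi$ is unbounded, so $F(z)=\EE[\exp(z\pi\tilde\pi)]$ has a finite radius of convergence, and even for bounded $\pi$ the subgaussian constant $C_\pi$ of $\pi\tilde\pi$ need not satisfy $4sC_\pi<1$. Concretely, in your far-tail piece $|G|>\eta\sqrt n$ you claim a contribution of order $\exp((\mathrm{const}\cdot\rho-\eta^2/2)n)$, but $\rho$ has vanished from the problem once the truncation $\One_{A\le t}$ is discarded: what remains is $\int_{|G|>\eta\sqrt n}\phi(G)F(G\sqrt{2s/n})^n\,dG$, and the integrand is either $+\infty$ or of order $\exp((2sC_\pi-\tfrac12)G^2)$, which diverges when $4sC_\pi\ge1$. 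No choice of $\rho$ repairs this because $\rho$ never reappears after the transform.

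The fix would require carrying the truncation (or the indicator $\One_{E_\delta}\One_{A\le t}$, which on $E_\delta$ forces $|U|\le\sqrt{2\rho/\lambda^2}(1+\delta)\sqrt n$) through the estimate rather than discarding it, but then the clean Hubbard--Stratonovich identity for $\EE[\exp(sU^2)]$ no longer applies. The paper's proof sidesteps all of this by working with the truncated quantity directly: writing $\EE[\One_{A\le t}\exp((1+\gamma)A)]=\int_0^\infty\Pr\{\One_{A\le t}\exp((1+\gamma)A)\ge r\}\,dr$, using the truncation to cut the integral at $r=\exp((1+\gamma)t)$, and applying the overlap concentration bound (Lemma~\ref{lem:ov-conc}) inside this finite range to get integrand decay $r^{-a}$ with $a=\frac{1-\eta}{(1+\gamma)\lambda^2}>1$. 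There the truncation $t\le\rho n$ is used precisely to ensure the overlap is small enough for the concentration lemma to apply, so $\rho$ enters the argument in a concrete and essential way. Your approach as written does not use the truncation at all in the step where it is needed, and the resulting integral genuinely diverges for admissible $\pi$.
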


\noindent We now return to the final term in~\eqref{eq:L-decomp}.

\begin{lemma}\label{lem:L-term-3}
For any sequences $D = D_n$ and $t = t_n$ that scale as $D = o(n/\log n)$ and $t = \Omega(n)$,
\[ \EE[\One_{A > t} \cdot \exp^{\le D}(A)] = o(1). \]
\end{lemma}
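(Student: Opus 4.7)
The plan is to split the integrand $\One_{A > t} \cdot \exp^{\le D}(A)$ into a deterministic pointwise upper bound for $\exp^{\le D}(A)$ and a tail probability for $\Pr(A > t)$. The key structural observation is that $A$ is deterministically bounded: by Cauchy--Schwarz, $\langle x,x'\rangle^2 \le \|x\|^2 \|x'\|^2$, so $A \le \lambda^2 n/2$ almost surely. Combined with the hypotheses $t = \Omega(n)$ and $D = o(n/\log n)$, this forces the chain $D \ll t < A \le \lambda^2 n/2$ on the event of interest for $n$ large, which is what makes both factors separately manageable.

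For the pointwise bound, since $d \le D < A$ on $\{A > t\}$, the terms in $\exp^{\le D}(A) = \sum_{d=0}^D A^d/d!$ form an increasing sequence in $d$ (as $A/(d+1) > 1$), so the last one dominates, giving $\exp^{\le D}(A) \le (D+1)\, A^D/D!$. I would then apply Stirling in the form $D! \ge (D/e)^D$ together with $A \le \lambda^2 n/2$ to obtain the uniform upper bound $\exp^{\le D}(A) \le (D+1)(e\lambda^2 n/(2D))^D$. Taking logarithms, this is $D\log(n/D) + O(D + \log D)$, and since $D = o(n/\log n)$ yields $D\log(n/D) \le D\log n = o(n)$, the pointwise bound is $\exp(o(n))$ on $\{A > t\}$.

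For the tail probability, $\{A > t\}$ translates to $|\langle x,x'\rangle|/(\|x\|\|x'\|) > u_0 := \sqrt{2t/(\lambda^2 n)}$, and $t = \Omega(n)$ implies $u_0$ is bounded below by a positive constant. I would invoke Lemma~\ref{lem:ov-conc}: if $u_0 \le \delta$ apply it directly at $u_0$, and if $u_0 > \delta$ use monotonicity of the probability in $u_0$ to first replace $u_0$ by $\delta$. Either way one obtains $\Pr(A > t) \le C\exp(-c_1 n)$ for some $c_1 > 0$. Combining,
\[ \EE[\One_{A > t} \cdot \exp^{\le D}(A)] \le \exp(o(n)) \cdot C\exp(-c_1 n) = o(1). \]
The only step that demands even minor care is the monotonicity reduction when $u_0 > \delta$; the rest is routine once one notices that the boundedness $A \le \lambda^2 n/2$ and the degree gap $D \ll t \le A$ together collapse the tail of the Taylor series to a single polynomial-in-$n$-times-exponential-in-$D\log n$ factor, which is beaten by the exponential tail of the cosine similarity.
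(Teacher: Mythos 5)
Your proof is correct and follows essentially the same strategy as the paper's: bound $\Pr\{A > t\} = \exp(-\Omega(n))$ via Lemma~\ref{lem:ov-conc}, bound $\exp^{\le D}(A)$ pointwise using the deterministic cap coming from Cauchy--Schwarz, and multiply. The paper uses the cruder bound $A \le n$ and $\exp^{\le D}(A) \le (D+1)n^D$ rather than your last-term-dominates argument with $D!$ in the denominator, but both yield $\exp(o(n))$ under $D = o(n/\log n)$, and the rest of the argument (including the monotonicity reduction when $u_0 > \delta$, which the paper leaves implicit) is the same.
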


\noindent The proof of Lemma~\ref{lem:L-term-3} is deferred to Appendix~\ref{app:pf-lem-L-term-3}. The proof of Theorem~\ref{thm:L-bound} now follows by combining the ingredients above; the full details of this are deferred to Appendix~\ref{app:pf-thm-L-bound}.

\subsection{Proof of Proposition~\ref{prop:reduction}}

We will first argue that $\alpha^* \in (0,1)$ without loss of generality. First, since $\phi(1) = 1$ and $\phi(\alpha^*) < \beta^* \le 1$, we must have $\alpha^* \ne 1$. Second, if $\alpha^* = 0$, we will find a \emph{different} asymptotically achievable point $(\alpha,\beta)$ with $\beta > \phi(\alpha)$ and $\alpha > 0$, to use in place of $(\alpha^*,\beta^*)$. Since $\phi$ is continuous, this can be done via the following test: with probability $p$ for a sufficiently small constant $p > 0$, output $\fp$; otherwise, apply the original decision rule that achieved $(\alpha^*,\beta^*)$. In the sequel we therefore assume $\alpha^* > 0$.

Define $\psi: [0,1] \to [0,1]$ to be the upper concave envelope of $\phi$ and $(\alpha^*,\beta^*)$, as illustrated in Figure~\ref{fig:pf}. By this we mean the minimal concave curve such that $\psi(\alpha) \ge \phi(\alpha)$ for all $\alpha \in [0,1]$ and $\psi(\alpha^*) = \beta^*$. Note that $\psi$ has the following structure for certain constants $0 < A_1 < \alpha^* < A_2 < 1$.
\begin{itemize}
\item For $\alpha \in [0,A_1] \cup [A_2,1]$, $\psi(\alpha) = \phi(\alpha)$.
\item For $\alpha \in [A_1,\alpha^*]$, the graph of $\psi$ is a straight line connecting $(A_1,\phi(A_1))$ and $(\alpha^*,\beta^*)$, and this line has slope $\phi'(A_1)$.
\item For $\alpha \in [\alpha^*,A_2]$, the graph of $\psi$ is a straight line connecting $(\alpha^*,\beta^*)$ and $(A_2,\phi(A_2))$, and this line has slope $\phi'(A_2)$.
\item Finally, $\phi'(A_1) > \phi'(A_2)$, since $\phi$ is concave and $(\alpha^*,\beta^*)$ lies strictly above the graph of $\phi$.
\end{itemize}
\noindent Here we have used some basic properties of $\phi$ from Assumption~\ref{assum:phi}, namely the fact that $\phi'$ is continuous and decreasing, as well as the limits of $\phi'$ near $0$ and $1$.

\begin{lemma}\label{lem:pushout}
$\val(\psi)$ exists and $\val(\psi) > \val(\phi)$.
\end{lemma}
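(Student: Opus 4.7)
My plan is to reduce the inequality to a calculation on $[A_1, A_2]$, where $\psi$ and $\phi$ actually differ. On $[0,A_1] \cup [A_2,1]$ we have $\psi = \phi$, so $\psi' = \phi'$ and the integrability of $(\psi')^2$ there is inherited from $\val(\phi) < \infty$. On $[A_1,A_2]$, the explicit structure recorded just before the lemma makes $\psi'$ a bounded step function taking only the values $s_1 := \phi'(A_1)$ and $s_2 := \phi'(A_2)$, which is trivially square-integrable. So $\val(\psi)$ exists, and it remains to prove the strict inequality.

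Since $\psi$ agrees with $\phi$ outside $[A_1,A_2]$,
\[
\val(\psi)^2 - \val(\phi)^2 \;=\; \int_{A_1}^{A_2} (\psi' - \phi')(\psi' + \phi')\,d\alpha.
\]
Let $h := \psi' - \phi'$. Because $\phi'$ is decreasing with $\phi'(A_1) = s_1$ and $\phi'(A_2) = s_2$, we have $h \ge 0$ on $[A_1,\alpha^*]$ and $h \le 0$ on $[\alpha^*,A_2]$. Moreover, since $\psi$ and $\phi$ coincide at $A_1$ and $A_2$, $\int_{A_1}^{A_2} h\,d\alpha = 0$, while $\int_{A_1}^{\alpha^*} h\,d\alpha = \psi(\alpha^*) - \phi(\alpha^*) = \beta^* - \phi(\alpha^*) > 0$.

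For the other factor I bound $\psi' + \phi'$ from below by $s_1 + \phi'(\alpha^*)$ on $[A_1,\alpha^*]$ (where $\phi' \ge \phi'(\alpha^*)$ and $\psi' = s_1$) and from above by $s_2 + \phi'(\alpha^*)$ on $[\alpha^*,A_2]$ (where $\phi' \le \phi'(\alpha^*)$ and $\psi' = s_2$). Pairing these sign-aware bounds against $h$ (which carries the opposite sign on each piece, so each pairing gives a valid lower bound on the integrand) and exploiting the cancellation $\int_{A_1}^{A_2} h\,d\alpha = 0$ for the common summand $\phi'(\alpha^*)$, I obtain
\[
\val(\psi)^2 - \val(\phi)^2 \;\ge\; (s_1 - s_2)\,\bigl(\beta^* - \phi(\alpha^*)\bigr) \;>\; 0,
\]
since $s_1 > s_2$ (noted in the structural description of $\psi$) and $\beta^* > \phi(\alpha^*)$ by hypothesis. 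The only subtle step is choosing the direction of each bound on $\psi' + \phi'$ correctly so that the contributions combine constructively rather than destructively under the cancellation; beyond this sign bookkeeping, the proof is routine, and notably does not require Cauchy--Schwarz or any smoothness of $\phi$ beyond what Assumption~\ref{assum:phi} already provides.
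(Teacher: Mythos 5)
Your proof is correct and reaches the same bound $\val(\psi)^2 - \val(\phi)^2 \ge (\phi'(A_1) - \phi'(A_2))(\beta^* - \phi(\alpha^*))$ as the paper, via an argument that is algebraically equivalent to the paper's but packaged differently. The paper first computes $\int_{A_1}^{A_2}(\psi')^2$ exactly, then bounds $\int_{A_1}^{A_2}(\phi')^2$ from above using the pointwise inequality $(f-m)(f-M) \le 0$ for $m \le f \le M$ (its display~\eqref{eq:mfM-fact}, applied separately on $[A_1,\alpha^*]$ and $[\alpha^*,A_2]$); you instead write the difference directly as $\int_{A_1}^{A_2}(\psi'-\phi')(\psi'+\phi')$ and bound the second factor by $s_1 + \phi'(\alpha^*)$ from below on the left piece and by $s_2 + \phi'(\alpha^*)$ from above on the right piece, exploiting the sign of $\psi'-\phi'$ and the cancellation $\int_{A_1}^{A_2}(\psi'-\phi') = 0$. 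Expanding either route gives the identical estimate interval-by-interval, so this is a cleaner factoring of the same idea rather than a new argument; the sign bookkeeping you flag is exactly the point where the two-sided bound $m \le \phi' \le M$ enters the paper's version.
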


\begin{proof}
The existence of $\val(\psi)$ is immediate from the existence of $\val(\phi)$ and the piecewise structure of $\psi$ discussed above. Since $\phi$ and $\psi$ coincide outside the interval $[A_1,A_2]$, it suffices to show
\begin{equation}\label{eq:pushout-goal}
\int_{A_1}^{A_2} (\psi'(\alpha))^2 \,d\alpha > \int_{A_1}^{A_2} (\phi'(\alpha))^2 \,d\alpha.
\end{equation}
Introduce the shorthand $\Delta_1 := \alpha^* - A_1$ and $\Delta_2 := A_2 - \alpha^*$. Since $\psi$ is piecewise linear on $[A_1,A_2]$, the left-hand side can be computed directly:
\begin{equation}\label{eq:pushout-lhs}
\int_{A_1}^{A_2} (\psi'(\alpha))^2 \,d\alpha = (\phi'(A_1))^2 \Delta_1 + (\phi'(A_2))^2 \Delta_2.
\end{equation}
Since $\phi(A_1) = \psi(A_1)$, $\phi(\alpha^*) < \psi(\alpha^*)$, and $\phi(A_2) = \psi(A_2)$, we have for some $\eta > 0$,
\begin{equation}\label{eq:int-eta-1}
\int_{A_1}^{\alpha^*} \phi'(\alpha) \,d\alpha = \int_{A_1}^{\alpha^*} \psi'(\alpha) \,d\alpha - \eta = \phi'(A_1) \Delta_1 - \eta
\end{equation}
and
\begin{equation}\label{eq:int-eta-2}
\int_{\alpha^*}^{A_2} \phi'(\alpha) \,d\alpha = \int_{\alpha^*}^{A_2} \psi'(\alpha) \,d\alpha + \eta = \phi'(A_2) \Delta_2 + \eta.
\end{equation}
Also note that an integrable function $f$ on $[a,b]$ with $0 \le m \le f(x) \le M$ for all $x \in [a,b]$ must satisfy
\begin{align}
\int_a^b f(x)^2 \,dx &= \int_a^b (f(x) - m)(f(x) + m) \,dx + m^2 (b-a) \nonumber \\
&\le (M+m) \int_a^b (f(x)-m)\,dx + m^2 (b-a) \nonumber \\
&= (M+m) \int_a^b f(x)\,dx - Mm(b-a).
\label{eq:mfM-fact}
\end{align}
Recalling that $\phi'$ is positive and decreasing, combine~\eqref{eq:int-eta-1},\eqref{eq:int-eta-2},\eqref{eq:mfM-fact} to bound the right-hand side of~\eqref{eq:pushout-goal}:
\begin{align*}
\int_{A_1}^{A_2} (\phi'(\alpha))^2 \,d\alpha &= \int_{A_1}^{\alpha^*} (\phi'(\alpha))^2 \,d\alpha + \int_{\alpha^*}^{A_2} (\phi'(\alpha))^2 \,d\alpha \\
&\le [\phi'(A_1) + \phi'(\alpha^*)] [\phi'(A_1) \Delta_1 - \eta] - \phi'(A_1) \phi'(\alpha^*) \Delta_1 \\
&\qquad + [\phi'(\alpha^*) + \phi'(A_2)] [\phi'(A_2) \Delta_2 + \eta] - \phi'(\alpha^*) \phi'(A_2) \Delta_2 \\
&= (\phi'(A_1))^2 \Delta_1 + (\phi'(A_2))^2 \Delta_2 - \eta[\phi'(A_1) - \phi'(A_2)].
\end{align*}
Compare this with~\eqref{eq:pushout-lhs} and recall $\phi'(A_1) > \phi'(A_2)$ to complete the proof.
\end{proof}

Our next step will be to approximate $\psi$ using a finite collection of points $u = (u_0,\ldots,u_r)$ where $u_i = (a_i,\psi(a_i))$ for some choice of $0 = a_0 < a_1 < \cdots < a_r = 1$. By virtue of lying on the graph of $\psi$, these points will be in ``concave position,'' which we define as follows.
\begin{definition}[Concave position and $\conc$]
Let $u = (u_0,\ldots,u_r)$ be a sequence of points in $[0,1]^2$, sorted by strictly ascending first coordinate, with $u_0 = (0,0)$ and $u_r = (1,1)$. We say $u$ is in \emph{concave position} if the slope of the line through $u_i$ and $u_{i+1}$ is strictly positive and strictly decreasing as a function of $i$. In this case, we let $\conc(u)$ denote the upper convex envelope of these points, i.e., the piecewise linear function $[0,1] \to [0,1]$ whose graph on $[a_i,a_{i+1}]$ is a straight line connecting $u_i$ and $u_{i+1}$.
\end{definition}

\begin{lemma}\label{lem:construct-u}
There exist values $r \ge 1$ and $0 = a_0 < a_1 < \cdots < a_r = 1$ such that the points $u = (u_0,\ldots,u_r)$ where $u_i = (a_i,\psi(a_i))$ are in concave position with
\[ \val(\conc(u)) > \val(\phi). \]
\end{lemma}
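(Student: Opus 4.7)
The plan is to exploit Lemma~\ref{lem:pushout}, which gives $\val(\psi) > \val(\phi)$: I will produce the required $u$ by sampling $\psi$ on a suitably fine partition of $[0,1]$ and prove $\val(\conc(u)) \to \val(\psi)$ as the mesh shrinks, so that any sufficiently fine sampling beats $\val(\phi)$.

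For the construction, I will force every candidate partition to contain the three corners $A_1 < \alpha^* < A_2$ of $\psi$ and to place \emph{no} interior nodes on the two linear segments $[A_1,\alpha^*]$ and $[\alpha^*,A_2]$, so that $\conc(u)$ agrees with $\psi$ exactly there. On the two ``smooth'' segments $[0,A_1]$ and $[A_2,1]$, where $\psi=\phi$, I will take refining sub-partitions of mesh $\delta$. To tame the blow-up $\phi'(\alpha)\to\infty$ as $\alpha\to 0^+$, I will additionally insert one extra cut-point $a>0$ chosen so that $\int_0^a (\phi'(\alpha))^2\,d\alpha$ is as small as desired; this is possible because $\val(\phi)$ is finite.

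Checking concave position should be routine from Assumption~\ref{assum:phi}: on each smooth segment the chord slopes equal $\phi'(\xi_i)$ for some $\xi_i\in(x_i,x_{i+1})$ by the mean value theorem, and are strictly decreasing along the segment because $\phi'$ is strictly decreasing; the same MVT argument shows the chord slopes adjacent to the linear pieces are strictly larger than $\phi'(A_1)$ (on the left) and strictly smaller than $\phi'(A_2)$ (on the right); and the strict inequality $\phi'(A_1) > \phi'(A_2)$ is part of the structure of $\psi$ recorded before Lemma~\ref{lem:pushout}.

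The main work is the convergence $\val(\conc(u)) \to \val(\psi)$. The two linear pieces contribute identically to both sides. For each sub-chord on a smooth piece, Cauchy--Schwarz gives
\[
\frac{(\phi(x_{i+1})-\phi(x_i))^2}{x_{i+1}-x_i} \;=\; \frac{1}{x_{i+1}-x_i}\left(\int_{x_i}^{x_{i+1}}\phi'(\alpha)\,d\alpha\right)^2 \;\le\; \int_{x_i}^{x_{i+1}}(\phi'(\alpha))^2\,d\alpha,
\]
which at once yields the one-sided bound $\val(\conc(u))^2 \le \val(\psi)^2$. For the matching lower bound I will use MVT to rewrite each sub-chord contribution as $\phi'(\xi_i)^2(x_{i+1}-x_i)$ and recognise the result as a Riemann sum for the continuous bounded function $(\phi')^2$ on $[a,A_1]$ and on $[A_2,1]$, which converges to $\int_a^{A_1}(\phi')^2+\int_{A_2}^{1}(\phi')^2$ as $\delta\to 0$; the single chord on $[0,a]$ only adds a nonnegative term, so no cancellation occurs. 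Choosing $a$ small (to absorb $\int_0^a(\phi')^2$ into the tolerance) before $\delta$ small brings $\val(\conc(u))^2$ within any prescribed $\eta$ of $\val(\psi)^2$, and then taking $\eta < \val(\psi)^2 - \val(\phi)^2$ finishes the proof. The only technical obstacle is precisely the improper behaviour of $\phi'$ at $0$, which rules out a blind appeal to Riemann sum convergence on all of $[0,A_1]$; the cut-point $a$, available thanks to the finiteness of $\val(\phi)$, is the clean workaround.
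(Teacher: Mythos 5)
Your proposal is essentially correct and reaches the conclusion by a route that is close in spirit to the paper's but technically different and arguably cleaner. Both approaches sample $\psi$ on a fine partition forced to contain $A_1,\alpha^*,A_2$ with no interior nodes on the two linear pieces, and both introduce a cut near $\alpha=0$ to tame the singularity $\phi'(\alpha)\to\infty$; the upper bound $\val(\conc(u))^2\le\val(\psi)^2$ via Cauchy--Schwarz appears implicitly in both. Where you diverge: for the matching lower bound, you use MVT to identify each chord contribution with $\phi'(\xi_i)^2(x_{i+1}-x_i)$ and invoke convergence of Riemann sums of the continuous, bounded function $(\phi')^2$ on $[a,A_1]\cup[A_2,1]$. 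The paper instead imposes a multiplicative control $\psi'(a_i)\le(1+\gamma)\psi'(a_{i+1})$ on each sub-interval and bounds $\val(\psi)^2-\val(\conc(u))^2$ directly; that control can fail as $\psi'\to 0$ near $\alpha=1$, which is why the paper must also cut near $1$ (at $a_{r-1}$), a step you avoid since $(\phi')^2$ extends continuously to $[A_2,1]$ by $\phi'(1^-)=0$. Your Riemann-sum argument is the more economical of the two.

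One small gap to patch: you assert the chord slopes are \emph{strictly} decreasing ``because $\phi'$ is strictly decreasing,'' but Assumption~\ref{assum:phi} only guarantees $\phi'$ is (weakly) decreasing, and the limits $\phi'(0^+)=+\infty$, $\phi'(1^-)=0$ do not preclude $\phi'$ being constant on some interior interval. If that happens, consecutive chord slopes on a smooth segment can coincide, or a chord slope abutting a linear piece can equal the line's slope, so $u$ need not be in concave position as defined. The paper handles this by removing redundant $u_i$'s for which $m_{i-1}=m_i$, noting this does not change $\conc(u)$; you should add the same cleanup step at the end. With that fix, your argument is complete.
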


\begin{proof}
In light of Lemma~\ref{lem:pushout} we can write $\val(\psi)^2 = \val(\phi)^2 + \epsilon$ for some $\epsilon > 0$. Since $\val(\psi)$ exists, it is possible to choose $a_1,a_{r-1}$ with $0 < a_1 < A_1$ and $A_2 < a_{r-1} < 1$ so that $\int_0^{a_1} (\psi'(\alpha))^2\,d\alpha \le \epsilon/6$ and $\int_{a_{r-1}}^1 (\psi'(\alpha))^2\,d\alpha \le \epsilon/6$. We will also include $A_1,\alpha^*,A_2$ in the list of $a_i$'s. It remains to partition the intervals $[a_1,A_1]$ and $[A_2,a_{r-1}]$. In each case, choose a fine enough partition so that each sub-interval $[a_i,a_{i+1}]$ satisfies $\psi'(a_i) \le (1+\gamma) \psi'(a_{i+1})$ for a constant $\gamma > 0$ to be chosen later. This is possible because $\phi'$ is continuous and strictly positive, and $\psi' = \phi'$ on $[a_1,A_1] \cup [A_2,a_{r-1}]$. Let $I$ denote the set of $i$ that index the sub-intervals $[a_i,a_{i+1}]$ of $[a_1,A_1] \cup [A_2,a_{r-1}]$. For each $i \in I$, the mean value theorem implies that the slope $m_i$ of the line connecting $u_i$ and $u_{i+1}$ satisfies
\[ \psi'(a_{i+1}) \le m_i \le \psi'(a_i) \le (1+\gamma)\psi'(a_{i+1}). \]
Note that the interval $[A_1,A_2]$ has the same contribution to both $\val(\conc(u))$ and $\val(\psi)$, and so
\begin{align*}
\val(\psi)^2 - & \val(\conc(u))^2 \le \sum_{i \in I} \left[\int_{a_i}^{a_{i+1}} (\psi'(\alpha))^2\,d\alpha - m_i^2(a_{i+1}-a_i)\right] \\
&\qquad\qquad\qquad\qquad + \int_{[0,a_1] \cup [a_{r-1},1]} (\psi'(\alpha))^2\,d\alpha \\
&\le \sum_{i \in I} \left[(\psi'(a_i))^2 (a_{i+1}-a_i) - m_i^2(a_{i+1}-a_i)\right] + \epsilon/3 \\
&\le \sum_{i \in I} \left[(1+\gamma)^2 m_i^2 (a_{i+1}-a_i) - m_i^2(a_{i+1}-a_i)\right] + \epsilon/3 \\
&= (2\gamma + \gamma^2) \sum_{i \in I} m_i^2(a_{i+1}-a_i) + \epsilon/3 \;\le\; (2\gamma + \gamma^2) (\psi'(a_1))^2 + \epsilon/3,
\end{align*}
implying $\val(\psi)^2 - \val(\conc(u))^2 \le 2\epsilon/3$ for a sufficiently small choice of $\gamma > 0$. Recalling $\val(\psi)^2 = \val(\phi)^2 + \epsilon$, this completes the proof that $\val(\conc(u)) > \val(\phi)$. Finally, remove any redundant $u_i$'s for which $m_{i-1} = m_i$, noting that this has no effect on $\val(\conc(u))$. The remaining points are in concave position because they lie on the graph of $\psi$, which is concave and strictly increasing.
\end{proof}

Next we modify the points $u$ slightly to create a new sequence of points $v = (v_0,\ldots,v_r)$ where $v_i = (a_i,b_i)$. The $\alpha$-coordinates are inherited from the $u$'s and the $\beta$-coordinates will be decreased slightly to ensure these points are asymptotically achievable. This step is only necessarily because we have only assumed asymptotic achievability for points strictly below the graph of $\phi$, not on it. The proof is deferred to Appendix~\ref{app:pf-lem-construct-v}.

\begin{lemma}\label{lem:construct-v}
Let $r$ and $\{a_i\}$ be as in Lemma~\ref{lem:construct-u}. There exist values $0 = b_0 < b_1 < \cdots < b_r = 1$ such that the points $v = (v_0,\ldots,v_r)$ where $v_i = (a_i,b_i)$ are in concave position with
\[ \val(\conc(v)) > \val(\phi), \]
and furthermore, $b_i < \psi(a_i)$ for each $i \notin \{0,r\}$.
\end{lemma}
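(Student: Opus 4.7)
The plan is to obtain $v$ by slightly perturbing the $\beta$-coordinates of $u$ downward, i.e., setting $b_0 = 0$, $b_r = 1$, and $b_i = \psi(a_i) - \epsilon_i$ for $i \notin \{0,r\}$ for a suitable choice of small constants $\epsilon_i > 0$. The key observation is that both the concave-position property and the value $\val(\conc(\cdot))$ depend continuously on the vector $(b_1,\ldots,b_{r-1}) \in \mathbb{R}^{r-1}$ (with $a_0,\ldots,a_r$ and $b_0,b_r$ held fixed), so an arbitrarily small perturbation only disturbs them by an arbitrarily small amount. Since we start at $u$ where all the relevant strict inequalities hold, we will have room to absorb the perturbation.

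More precisely, write $m_i(b) := (b_{i+1}-b_i)/(a_{i+1}-a_i)$ for the slope of the $i$-th segment. By the concave-position hypothesis on $u$, the slopes $m_0(u) > m_1(u) > \cdots > m_{r-1}(u)$ are all strictly positive and strictly decreasing, so there is some $\eta > 0$ such that any $b$ with $|b_i - \psi(a_i)| \le \eta$ for each $i$ still satisfies $m_0(b) > m_1(b) > \cdots > m_{r-1}(b) > 0$ and $0 < b_1 < \cdots < b_{r-1} < 1$. Furthermore, $\val(\conc(v))^2 = \sum_{i=0}^{r-1} m_i(v)^2 (a_{i+1} - a_i)$ is a continuous function of $v$, and $\val(\conc(u)) > \val(\phi)$ by Lemma~\ref{lem:construct-u}, so (shrinking $\eta$ if necessary) we also have $\val(\conc(v)) > \val(\phi)$ whenever $|b_i - \psi(a_i)| \le \eta$ for all $i$.

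It therefore suffices to choose any $\epsilon_i \in (0,\eta]$ for $i \notin \{0,r\}$ and set $b_i = \psi(a_i) - \epsilon_i$; the resulting $v$ lies in concave position, has $\val(\conc(v)) > \val(\phi)$, and satisfies $b_i < \psi(a_i)$ strictly for each interior $i$, as required. I do not expect any real obstacle here: the lemma is a purely geometric perturbation argument, and its only role is to convert achievability of points \emph{strictly below} $\phi$ (the form of the positive result in the hypothesis of Proposition~\ref{prop:reduction}) into an approximately concave collection of achievable points that still beats $\val(\phi)$. The one small bookkeeping point is to keep $b_0 = 0$ and $b_r = 1$ fixed, which is consistent with the endpoints $u_0 = (0,0)$ and $u_r = (1,1)$ of $u$ and automatically satisfies $b_0 < b_1$ and $b_{r-1} < b_r$ for $\eta$ small.
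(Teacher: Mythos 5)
Your proof is correct and is a genuinely different route from the paper's. The paper proceeds constructively: it introduces a parameter $\gamma > 0$ and picks each $b_i < \psi(a_i)$ so that, segment by segment, $\slope(u_{i-1},v_i) \ge (1-\gamma)m_{i-1}$ and $\slope(u_{i-1},v_i) > \slope(v_i,u_{i+1})$, then chains these to get $\ell_i \ge (1-\gamma)m_i$, concluding $\val(\conc(v)) \ge (1-\gamma)\val(\conc(u))$ and verifying concave position by hand. Your argument instead observes that, with the $a_i$'s fixed, the segment slopes $m_i(b)$ are (affine, hence) continuous functions of $(b_1,\ldots,b_{r-1})$, so all of the relevant conditions --- strict monotonicity and positivity of the slopes, $0 < b_1 < \cdots < b_{r-1} < 1$, and $\val(\conc(v))^2 = \sum_i m_i(v)^2 (a_{i+1}-a_i) > \val(\phi)^2$ --- are finitely many strict inequalities and hence define an open set containing the point $b_i = \psi(a_i)$; any sufficiently small downward perturbation $b_i = \psi(a_i) - \epsilon_i$ with $\epsilon_i \in (0,\eta]$ stays inside. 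This openness/continuity viewpoint is cleaner and shorter than the paper's explicit slope estimates; the trade-off is that the paper's version is quantitative (giving an explicit lower bound $\val(\conc(v)) \ge (1-\gamma)\val(\conc(u))$), though that extra information is not used elsewhere in the argument.
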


We now have a finite number of points $v_0,\ldots,v_r$ such that each is asymptotically achievable in time $\cT$. These points depend only on $\phi,\alpha^*,\beta^*$ but not $n$. To complete the proof, we construct an algorithm to compute a function $f$ achieving~\eqref{eq:ratio-goal}. The points $v_0,\ldots,v_r$, as well as the algorithms for asymptotically achieving each of them, will be hard-coded into our algorithm for $f$.

\begin{proof}[Proof of Proposition~\ref{prop:reduction}]
Given input $Y$, we describe the algorithm to compute $f(Y)$. Let $t_0,\ldots,t_r$ denote the tests that asymptotically achieve the points $v_0,\ldots,v_r$ from Lemma~\ref{lem:construct-v}, and let $s = (s_0,\ldots,s_r)$ denote the sequence of outputs $s_i = t_i(Y)$. Since $v_0 = (0,0)$ and $v_r = (1,1)$, we always have $s_0 = \fq$ and $s_r = \fp$. Our goal is to choose $f(Y)$ so that
\begin{equation}\label{eq:ratio-goal-2}
\frac{\EE_{Y \sim \cP}[f(Y)]}{\sqrt{\EE_{Y \sim \cQ}[f(Y)^2]}} \ge \val(\conc(v)) - o(1),
\end{equation}
which implies~\eqref{eq:ratio-goal} since $\val(\conc(v)) > \val(\phi)$ by Lemma~\ref{lem:construct-v}.

For intuition, it is natural to expect the rejection regions of the $t_i$'s to be nested in the sense that $t_{i+1}(Y) = \fp$ whenever $t_i(Y) = \fp$. If this is the case then the sequence $s$ is always equal to one of the ``monotone'' sequences
\[ S^{(0)} = (\fq,\fp,\fp,\ldots,\fp), \quad S^{(1)} = (\fq,\fq,\fp,\ldots,\fp), \quad \ldots, \quad S^{(r-1)} = (\fq,\fq,\ldots,\fq,\fp). \]
If $s = S^{(j)}$ for some $j$, let $f(Y) = \ell_j$, which, recall, is the slope of the line connecting $v_i$ and $v_{i+1}$. In the case where the rejection regions of the $t_i$'s are nested, this rule achieves~\eqref{eq:ratio-goal-2}. However, there is no guarantee that the rejection regions will be nested, and so we need to also decide on $f(Y)$ for non-monotone $s$.

The general construction for $f(Y)$ is as follows. For $i = 0,\ldots,r-1$ define
\[ \sigma_i(s) = \begin{cases} +1 & \text{if } s_i = \fq,\, s_{i+1} = \fp, \\ -1 & \text{if } s_i = \fp,\, s_{i+1} = \fq, \\ 0 & \text{otherwise.} \end{cases} \]
Then choose any $f(Y) > 0$ satisfying
\[ \sum_{i=0}^{r-1} \sigma_i(s) \ell_i \le f(Y) \le \sqrt{\sum_{i=0}^{r-1} \sigma_i(s) \ell_i^2}, \]
for example, $f(Y)$ can be defined to simply equal the left-hand side above. The interval for possible $f(Y)$ values is nonempty due to Lemma~\ref{lem:alt-sum} below, using the following facts: $\ell_i$ is positive and decreasing in $i$, by definition of concave position; and the nonzero $\sigma_i$'s alternate in sign, starting with positive. Since there are only a finite number of possible values for $s$, a lookup table with the corresponding $f(Y)$ values can be hard-coded into the algorithm. Note that if $s$ is a monotone sequence $S^{(j)}$, we recover the rule $f(Y) = \ell_j$ from above.

Since $f(Y)$ depends only on $s$, we write $f(s) = f(Y)$. Now compute
\begin{align*}
\EE_\cP[f] &= \sum_s \cP(s) f(s) \ge \sum_s \cP(s) \sum_{i=0}^{r-1} \sigma_i(s) \ell_i = \sum_{i=0}^{r-1} \ell_i \sum_s \sigma_i(s) \cP(s) \\
&= \sum_{i=0}^{r-1} \ell_i \, [\cP(s_{i+1} = \fp) - \cP(s_i = \fp)] \\
&= \sum_{i=1}^r \cP(s_i = \fp) (\ell_{i-1} - \ell_i) \qquad\qquad \text{where } \ell_r := 0 \text{ and using } \cP(s_0 = \fp) = 0 \\
&\ge \sum_{i=1}^r b_i (\ell_{i-1} - \ell_i) - o(1) = \sum_{i=0}^{r-1} \ell_i (b_{i+1} - b_i) - o(1) = \val(\conc(v))^2 - o(1).
\end{align*}
Similarly,
\begin{align*}
\EE_\cQ[f^2] &= \sum_s \cQ(s) f(s)^2 \le \sum_s \cQ(s) \sum_{i=0}^{r-1} \sigma_i(s) \ell_i^2 = \sum_{i=0}^{r-1} \ell_i^2 \sum_s \sigma_i(s) \cQ(s) \\
&= \sum_{i=0}^{r-1} \ell_i^2 \, [\cQ(s_{i+1} = \fp) - \cQ(s_i = \fp)] = \sum_{i=1}^r \cQ(s_i = \fp) (\ell_{i-1}^2 - \ell_i^2) \\
&\le \sum_{i=1}^r a_i (\ell_{i-1}^2 - \ell_i^2) - o(1) = \sum_{i=0}^{r-1} \ell_i^2 (a_{i+1} - a_i) - o(1) = \val(\conc(v))^2 - o(1).
\end{align*}
Together, the above calculations imply~\eqref{eq:ratio-goal-2} as desired.
\end{proof}

\noindent Finally, the following fact was used above. Its proof is deferred to Appendix~\ref{app:pf-lem-alt-sum}.

\begin{lemma}\label{lem:alt-sum}
For an integer $m \ge 1$, suppose $h_0 > h_1 > \cdots > h_{m-1} > 0$. Then
\[ 0 < \sum_{i=0}^{m-1} (-1)^i h_i \le \sqrt{\sum_{i=0}^{m-1} (-1)^i h_i^2}. \]
\end{lemma}

\appendix

\section{LSS and its ROC Curve}

\subsection{Proof of Theorem~\ref{thm:positive}}
\label{app:pf-thm-positive}

\begin{proof}[Proof of Theorem~\ref{thm:positive}]
Our null distribution is identical to that of~\cite{weak-wigner} (taking their parameters to have values $w_2 = 2$, $w_4 = 3$), and our alternative is identical to theirs (with $\omega = \lambda^2$) except on the event $\|x\| = 0$. This event occurs with probability $c^n$ for a constant $c \in [0,1)$, namely $c := \Pr(\pi = 0)$. Thus, if we run the algoritm of~\cite{weak-wigner} on our model, $\alpha$ will be unchanged and $\beta$ will only change by an additive $o(1)$.

For convenience, we repeat the definition of the LSS test statistic:
\begin{equation}\label{eq:lss-2}
\sum_{i=1}^n h_\lambda(\mu_i) \qquad \text{with} \qquad h_\lambda(\mu) := -\log(1 - \lambda \mu + \lambda^2),
\end{equation}
where $\mu_1 \ge \mu_2 \ge \cdots \ge \mu_n$ are the eigenvalues of the input matrix $Y$. The test statistic $L_\omega$ considered in~\cite{weak-wigner} is a centered version of~\eqref{eq:lss-2}, namely $\sum_{i=1}^n h_\lambda(\mu_i) - Cn$ for a constant $C = C(\lambda)$; it is shown in~\cite{weak-wigner} that $L_\omega$ converges in distribution (as $n \to \infty$) to
\[ \cN\left(-\frac{1}{2} \log(1-\lambda^2) \, , \; -2\log(1-\lambda^2)\right) \qquad \text{under } \PP_0, \]
\[ \cN\left(-\frac{3}{2} \log(1-\lambda^2) \, , \; -2\log(1-\lambda^2)\right) \qquad \text{under } \PP_\lambda. \]
Let $\mu = -\log(1-\lambda^2) > 0$ so that the statistic $(L_\omega - \mu)/\sqrt{2\mu}$ converges in distribution to $\cN(\pm \sqrt{\mu/8}, 1)$ where the plus sign holds under $\PP_\lambda$ and the minus sign holds under $\PP_0$. By thresholding this statistic at some constant $\tau \in \RR$, we obtain a test whose size and power converge to
\begin{align*}
\alpha &= 1-\Phi(\tau+\sqrt{\mu/8}), \\
\beta &= 1-\Phi(\tau-\sqrt{\mu/8}).
\end{align*}
Solving for $\beta$ in terms of $\alpha$ yields
\begin{align*}
\beta &= 1 - \Phi\left[\left(\Phi^{-1}(1-\alpha) - \sqrt{\mu/8}\right) - \sqrt{\mu/8}\right] \\
&= 1 - \Phi\left[\Phi^{-1}(1-\alpha) - \sqrt{\mu/2}\right],
\end{align*}
or equivalently, $\beta = \phi_\lambda(\alpha)$.

It remains to show that the LSS test can be implemented in polynomial time (in the real RAM model; see Remark~\ref{rem:comp-model}). We will see that this can be done using a series of simple operations (such as matrix multiplication), without the need to compute individual eigenvalues. For any fixed $\lambda \in (0,1)$, we have with probability $1-o(1)$ that all the eigenvalues $\mu_i$ of the input matrix $Y$ lie in the interval $[-2-\epsilon,\,2+\epsilon]$ for any fixed $\epsilon > 0$ (see~\cite{FP-wigner,maida}); we will condition on this ``good'' event. In light of the prior discussion on convergence to Gaussian, to achieve the desired error rates we do not need to compute~\eqref{eq:lss-2} exactly but rather approximate it with additive error $o(1)$ whenever the above ``good'' event holds. To this end, it suffices to replace $h_\lambda$ by a polynomial approximation $\hat{h}_\lambda$ that uniformly approximates $h_\lambda$ with additive error $o(1/n)$ on the interval $[-2-\epsilon,\,2+\epsilon]$. The resulting statistic can be computed as a matrix polynomial in $Y$: $\sum_{i=1}^n \hat{h}_\lambda(\mu_i) = \Tr(\hat{h}_\lambda(Y))$.

To construct $\hat{h}_\lambda$, consider the degree-$d$ Taylor expansion of $h_\lambda$ centered at 0, namely
\[ \hat{h}_\lambda(x) := -\log(1+\lambda^2) + \sum_{k=1}^d \frac{1}{k} \left(\frac{\lambda}{1+\lambda^2}\right)^k x^k. \]
Since $h_\lambda$ is analytic on $\mathbb{C} \setminus [(1+\lambda^2)/\lambda,\infty)$, the Taylor series
\[ -\log(1+\lambda^2) + \sum_{k=1}^\infty \frac{1}{k} \left(\frac{\lambda}{1+\lambda^2}\right)^k x^k \]
converges to $h_\lambda(x)$ for all $|x| < (1+\lambda^2)/\lambda$. Since $\lambda \in (0,1)$ we have $\lambda/(1+\lambda^2) \in (0,1/2)$, and so we can choose $\epsilon > 0$ small enough so that $r := (2+\epsilon)\lambda/(1+\lambda^2) < 1$. This ensures that the Taylor series converges for all $x \in [-2-\epsilon,\,2+\epsilon]$. Now for $x \in [-2-\epsilon,\,2+\epsilon]$ we can write
\[ |h_\lambda(x) - \hat{h}_\lambda(x)| \le \sum_{k=d+1}^\infty \frac{1}{k} \left(\frac{\lambda}{1+\lambda^2}\right)^k |x|^k \le \sum_{k=d+1}^\infty \left(\frac{\lambda \cdot |x|}{1+\lambda^2}\right)^k \le \sum_{k=d+1}^\infty r^k = \frac{r^{d+1}}{1-r}. \]
To make this $o(1/n)$, it suffices to have $r^d/(1-r) \le 1/n^2$, which is achieved by choosing $d \ge [2\log n - \log(1-r)]/\log(1/r) = \Theta(\log n)$.
\end{proof}

\subsection{Proof of Lemma~\ref{lem:calc-val}}
\label{app:pf-lem-calc-val}

\begin{proof}[Proof of Lemma~\ref{lem:calc-val}]

From the proof of Theorem~\ref{thm:positive}, the curve $\beta = \phi_\lambda(\alpha)$ for $\alpha \in (0,1)$ admits a parametric representation
\begin{align}
\label{eq:parametric-a}
\alpha &= 1-\Phi(\tau+\sqrt{\mu/8}), \\
\label{eq:parametric-b}
\beta &= 1-\Phi(\tau-\sqrt{\mu/8}),
\end{align}
where $\mu = -\log(1-\lambda^2)$ and the variable $\tau$ ranges over $(-\infty,\infty)$. By elementary calculus,
\[ \Phi'(z) = \frac{1}{\sqrt{2\pi}} \exp(-z^2/2), \]
and so
\begin{align*}
\frac{d\alpha}{d\tau} &= -\frac{1}{\sqrt{2\pi}} \exp\left(-\frac{1}{2}(\tau + \sqrt{\mu/8})^2\right), \\
\frac{d\beta}{d\tau} &= -\frac{1}{\sqrt{2\pi}} \exp\left(-\frac{1}{2}(\tau - \sqrt{\mu/8})^2\right).
\end{align*}
Combining these,
\begin{equation}\label{eq:phi-prime}
\phi'_\lambda(\alpha) = \frac{d\beta}{d\tau} \cdot \frac{d\tau}{d\alpha} = \exp\left(\tau\sqrt{\mu/2}\right).
\end{equation}
Now
\begin{align*}
\val(\phi_\lambda)^2 &= \int_0^1 (\phi_\lambda'(\alpha))^2 \,d\alpha \\
&= \int_{\infty}^{-\infty} \exp\left(\tau \sqrt{2\mu}\right) \cdot \frac{-1}{\sqrt{2\pi}} \exp\left(-\frac{1}{2}(\tau + \sqrt{\mu/8})^2\right) \,d\tau \\
&= \frac{1}{\sqrt{2\pi}} \int_{-\infty}^\infty \exp\left(-\tau^2/2 + 3\tau\sqrt{2\mu}/4 - \mu/16\right) \,d\tau \\
&= \frac{1}{\sqrt{2\pi}} \int_{-\infty}^\infty \exp\left(-\frac{1}{2}(\tau - 3\sqrt{2\mu}/4)^2 + \mu/2\right) \,d\tau \\
&= \exp(\mu/2) \\
&= (1-\lambda^2)^{-1/2},
\end{align*}
and so $\val(\phi_\lambda) = (1-\lambda^2)^{-1/4}$.
\end{proof}

\subsection{Proof of Lemma~\ref{lem:check-phi-assum}}
\label{app:pf-lem-check-phi-assum}

\begin{proof}[Proof of Lemma~\ref{lem:check-phi-assum}]
Recall from~\eqref{eq:parametric-a},\eqref{eq:parametric-b} the parametric form $\alpha = \alpha(\tau)$, $\beta = \beta(\tau)$ for the curve $\beta = \phi_\lambda(\alpha)$. As $\tau$ increases from $-\infty$ to $\infty$, $\alpha$ and $\beta$ each decrease from $1$ to $0$. Also recall from~\eqref{eq:phi-prime} the equation for the derivative: $\phi'_\lambda(\alpha) = \exp(\tau\sqrt{\mu/2})$ where $\mu = -\log(1-\lambda^2) > 0$. Note that $\phi'_\lambda$ is continuous, strictly positive, and decreasing in $\alpha$ (since it's increasing in $\tau$). Furthermore, in the limit $\alpha \to 0^+$, i.e., $\tau \to \infty$, we have $\phi'_\lambda \to \infty$. Similarly, in the limit $\alpha \to 1^-$, i.e., $\tau \to -\infty$, we have $\phi'_\lambda \to 0$. The finiteness of $\val(\phi_\lambda)$ follows from Lemma~\ref{lem:calc-val}, where the value is also computed.
\end{proof}

\section{Analysis of the Low-Degree Likelihood}
\label{app:analysis-ldlr}

\subsection{Proof of Lemma~\ref{lem:unif-int}}
\label{app:pf-lem-unif-int}

\begin{proof}[Proof of Lemma~\ref{lem:unif-int}]
We will write the expectation as the integral of a tail bound and then apply Lemma~\ref{lem:ov-conc} (below) which gives a tail bound for $A$. Choose $\eta > 0$ small enough so that $a := \frac{1-\eta}{(1+\gamma)\lambda^2} > 1$, and let $\delta,C$ be the corresponding constants from Lemma~\ref{lem:ov-conc}. Let $\rho = \delta^2 \lambda^2/2$. Now
\begin{align*}
\EE[\One_{A \le t} \cdot \exp((1+\gamma)A)] &= \int_0^\infty \Pr\{\One_{A \le t} \cdot \exp((1+\gamma)A) \ge r\} \,dr \\
&= \int_0^\infty \Pr\{A \le t \text{ and } A \ge (1+\gamma)^{-1} \log r\} \,dr \\
&\le 1 + \int_1^{\exp((1+\gamma)t)} \Pr\{A \ge (1+\gamma)^{-1} \log r\} \,dr \\
&= 1 + \int_1^{\exp((1+\gamma)t)} \Pr\left\{\frac{|\langle x,x' \rangle|}{\|x\| \|x'\|} \ge \sqrt{\frac{2\log r}{(1+\gamma)\lambda^2 n}}\right\} \,dr \\
&\le 1 + \int_1^\infty C\exp\left(-\frac{(1-\eta)\log r}{(1+\gamma)\lambda^2}\right) \,dr
\intertext{where we have applied Lemma~\ref{lem:ov-conc}, noting that $r \le \exp((1+\gamma)t)$ implies $\sqrt{\frac{2\log r}{(1+\gamma)\lambda^2 n}} \le \sqrt{2\rho/\lambda^2} = \delta$. Continuing from above,}
&= 1 + \int_1^\infty Cr^{-a} \,dr \\
&= 1 + \frac{C}{a-1},
\end{align*}
recalling $a > 1$.
\end{proof}

We next give some concentration inequalities, building up to Lemma~\ref{lem:ov-conc} which was used above. Recall that $x,x' \in \RR^n$ are independent vectors with entries drawn i.i.d.\ from $\pi$, which satisfies Assumption~\ref{assum:pi}.

\begin{lemma}[\cite{sk-cert}, Proposition~5.12]
For every $\eta > 0$ there exist positive constants $\delta,C$ (depending on $\eta,\pi$) such that, for all $n$,
\[ \Pr\{|\langle x,x' \rangle| \ge un\} \le C \exp\left(-(1-\eta)\frac{nu^2}{2}\right) \qquad \text{for all } u \in [0,\delta]. \]
\end{lemma}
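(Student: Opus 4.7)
The plan is to prove this via a direct Chernoff bound applied to $\langle x,x'\rangle = \sum_{i=1}^n x_i x'_i$, exploiting that the cumulant generating function of the i.i.d.\ products $x_i x'_i$ has a Taylor expansion $\tfrac{t^2}{2} + O(t^3)$ near $t=0$. The only role of subgaussianity is to guarantee that this expansion is valid (and the higher-order error is controlled) on a fixed neighborhood of $t=0$ independent of $n$; once that is in hand, the sharp constant $1/2$ in the exponent falls out of the standard Chernoff optimization, with the slack $\eta$ absorbing the cubic error term.

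Concretely, I would first compute, by independence across coordinates,
\[ \EE\bigl[\exp(t\langle x,x'\rangle)\bigr] = \bigl(\EE_{\pi,\pi'}\exp(t\pi\pi')\bigr)^n, \]
and let $\psi(t) := \log \EE_{\pi,\pi'}\exp(t\pi\pi')$. Since $\pi$ is subgaussian, the product $\pi\pi'$ is sub-exponential (this is a short Cauchy--Schwarz/Fubini check using Assumption~\ref{assum:pi}), so $\EE\exp(t\pi\pi')<\infty$ on some interval $|t|\le t_0=t_0(\pi)$, and $\psi$ is analytic there. Using $\EE[\pi]=0$ and $\EE[\pi^2]=1$, the Taylor expansion at $0$ is
\[ \psi(t) = \tfrac{t^2}{2} + O(t^3), \]
so for any $\eta>0$ there exists $\delta=\delta(\eta,\pi)\in(0,t_0]$ with $\psi(t) \le \tfrac{1+\eta/2}{2}\,t^2$ for all $|t|\le\delta$.

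I would then apply Chernoff: for $u\in[0,\delta]$ and any $t\in[0,\delta]$,
\[ \Pr\bigl\{\langle x,x'\rangle \ge un\bigr\} \le \exp\bigl(-tun + n\psi(t)\bigr) \le \exp\!\left(-tun + \tfrac{(1+\eta/2)}{2}\, nt^2\right). \]
Choosing the (near-)optimal $t = u/(1+\eta/2) \le \delta$ yields an exponent of $-\tfrac{n u^2}{2(1+\eta/2)} \le -(1-\eta)\tfrac{nu^2}{2}$. Repeating the argument for $-\langle x,x'\rangle$ and union-bounding gives the stated inequality with an absolute constant $C=2$ (after possibly shrinking $\delta$).

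The only mildly subtle point, which I would flag as the one place careful bookkeeping is required, is verifying analyticity of $\psi$ on a neighborhood of $0$ whose size depends only on $\pi$; once that is secured, the rest is routine Taylor expansion plus optimization. There is no need to condition on $x'$ or to truncate entries, because the sub-exponential MGF of $\pi\pi'$ already captures the sharp Gaussian behavior in the small-deviation regime $u\le\delta$.
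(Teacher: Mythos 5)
The paper does not prove this lemma; it is imported verbatim as Proposition~5.12 of the cited reference \cite{sk-cert}, so there is no in-paper proof to compare against. Your argument is the standard Cram\'er/Chernoff bound for the i.i.d.\ sum $\sum_i x_i x'_i$ and is correct: the MGF factorizes across coordinates without any need to condition on $x'$; subgaussianity of $\pi$ gives that $\pi\pi'$ is sub-exponential, hence $\psi(t)=\log\EE\exp(t\pi\pi')$ is finite and analytic on a fixed interval $|t|\le t_0(\pi)$; and since $\EE[\pi\pi']=0$, $\Var(\pi\pi')=1$, the expansion $\psi(t)=t^2/2+O(t^3)$ holds on that interval. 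Shrinking to $|t|\le\delta$ so that $\psi(t)\le\tfrac{1+\eta/2}{2}t^2$ and taking $t=u/(1+\eta/2)\le\delta$ yields an exponent of $-\tfrac{nu^2}{2(1+\eta/2)}\le-(1-\eta)\tfrac{nu^2}{2}$, and the lower tail is handled symmetrically (the Taylor bound on $\psi$ is two-sided, so no symmetry of $\pi$ is needed), giving $C=2$. This is almost certainly the same mechanism as in \cite{sk-cert}; in any case the argument you give is self-contained and sound.
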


\begin{lemma}
For a constant $\zeta > 0$ (depending on $\pi$), for all $u \ge 0$ and all $n$,
\[ \Pr\left\{\left|\|x\|^2 - n\right| \ge un\right\} \le 2 \exp\left(-\frac{n}{2} \min\left\{\frac{u^2}{\zeta^2},\frac{u}{\zeta}\right\}\right) \]
(and the same holds for $x'$).
\end{lemma}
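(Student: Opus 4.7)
The plan is to recognize $\|x\|^2 - n = \sum_{i=1}^n Y_i$ as a sum of i.i.d.\ centered random variables, where $Y_i := x_i^2 - 1$ satisfies $\E[Y_i] = 0$ by Assumption~\ref{assum:pi}, and then apply a Bernstein-type tail bound for sub-exponential variables.

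First I would establish that $Y_i$ is sub-exponential by bounding its moment generating function. Since $\pi$ is subgaussian, standard arguments give moment estimates $\E[\pi^{2k}] \le (Ck)^k$ for a constant $C = C(c)$ (extracted from $\E[\exp(t\pi)] \le \exp(ct^2)$ by a subgaussian tail bound and integration), which in turn yields $\E[|Y_i|^k] \le (C'k)^k$ for some $C' = C'(c)$. A Taylor expansion of $s \mapsto \E[\exp(sY_i)]$ then produces constants $\nu, b > 0$ (depending only on $\pi$) such that
\[ \E[\exp(sY_i)] \le \exp(\nu^2 s^2 / 2) \qquad \text{for all } |s| \le 1/b, \]
since the linear term vanishes by centering and the tails of the series can be absorbed into the quadratic once $|s|$ is small enough (using $k! \ge (k/e)^k$ to dominate the combinatorial blow-up in the moment bounds).

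Second, I would apply the Chernoff bound, once per tail. For $0 < s \le 1/b$,
\[ \Pr\!\left\{\|x\|^2 - n \ge un\right\} \le e^{-sun}\,\E[e^{sY_1}]^n \le \exp\!\left(-sun + \tfrac{n\nu^2 s^2}{2}\right). \]
Optimizing over $s \in (0, 1/b]$ splits into two regimes: when $u \le \nu^2/b$, the unconstrained minimizer $s = u/\nu^2$ is admissible and yields $\exp(-nu^2/(2\nu^2))$; when $u > \nu^2/b$, the boundary choice $s = 1/b$, combined with $u > \nu^2/b$ to absorb the quadratic penalty, yields $\exp(-un/(2b))$. Choosing $\zeta := \max(\nu, 2b)$ packages both regimes into $\exp\!\bigl(-\tfrac{n}{2}\min\{u^2/\zeta^2, u/\zeta\}\bigr)$. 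The lower tail $\Pr\{\|x\|^2 - n \le -un\}$ is handled identically using $\E[\exp(-sY_1)]$ (in fact more easily, as $Y_i \ge -1$ is bounded below so the MGF for negative argument has no issue at all), and a union bound over the two tails introduces the factor of $2$ in the stated inequality.

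The only step that requires real care is the sub-exponential MGF bound on $Y_i = x_i^2 - 1$; everything else is the textbook Bernstein template. A shortcut would be to invoke the black-box statement that squares of subgaussians are sub-exponential (e.g., Lemma~2.7.6 of Vershynin's \emph{High-Dimensional Probability}) combined with the scalar Bernstein inequality (Theorem~2.8.1 of the same reference), which yields the result immediately with $\zeta$ proportional to the sub-exponential Orlicz norm $\|\pi^2 - 1\|_{\psi_1}$.
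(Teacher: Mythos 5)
Your proposal is correct and matches the paper's approach: the paper simply cites the standard facts that the square of a subgaussian variable is sub-exponential and that Bernstein's inequality applies (Lemma 1.12 and Theorem 1.13 of Rigollet's notes), which is precisely the black-box shortcut you describe in your final paragraph. Your detailed MGF-and-Chernoff derivation is a correct unpacking of that citation and introduces nothing beyond the textbook Bernstein template.
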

\begin{proof}
This is a form of Bernstein's inequality for subexponential random variables. Since $\pi$ is subgaussian, $\pi^2 - \EE[\pi^2]$ is subexponential. See Lemma~1.12 and Theorem~1.13 of~\cite{RH-notes}.
\end{proof}

\begin{lemma}\label{lem:ov-conc}
For every $\eta > 0$ there exist positive constants $\delta,C$ (depending on $\eta,\pi$) such that, for all $n$,
\[ \Pr\left\{\frac{|\langle x,x' \rangle|}{\|x\| \|x'\|} \ge u\right\} \le C \exp\left(-(1-\eta)\frac{nu^2}{2}\right) \qquad \text{for all } u \in [0,\delta]. \]
\end{lemma}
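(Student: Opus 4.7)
\textbf{Proof plan for Lemma~\ref{lem:ov-conc}.} The strategy is a union bound: if the normalized ratio $|\langle x,x'\rangle|/(\|x\|\|x'\|)$ is unusually large, then either the raw inner product $|\langle x,x'\rangle|$ is unusually large, or one of the norms $\|x\|,\|x'\|$ is anomalously small. The first event is controlled by the concentration inequality of~\cite{sk-cert} (the penultimate lemma), while the other two are exponentially unlikely by Bernstein's inequality (the preceding lemma on $\|x\|^2$). We then tune three parameters $(\eta',\epsilon,\delta)$ so all three contributions fit under the desired $(1-\eta)$-exponent.

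\textbf{Step 1: parameter choice.} Fix $\eta > 0$. Choose $\eta' \in (0,\eta)$ and $\epsilon \in (0,1)$ small enough that $(1-\eta')(1-\epsilon)^2 \ge 1-\eta$; this is clearly possible by continuity. Let $\delta_1 = \delta_1(\eta',\pi) > 0$ and $C_1 = C_1(\eta',\pi)$ be the constants supplied by the inner-product concentration lemma applied with parameter $\eta'$, and let $\zeta$ be the constant from Bernstein for $\|x\|^2$.

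\textbf{Step 2: event decomposition.} On the event $\{\|x\|^2 \ge (1-\epsilon)n\} \cap \{\|x'\|^2 \ge (1-\epsilon)n\}$ we have $\|x\|\|x'\| \ge (1-\epsilon)n$, so $|\langle x,x'\rangle|/(\|x\|\|x'\|) \ge u$ forces $|\langle x,x'\rangle| \ge u(1-\epsilon)n$. Hence
\begin{equation*}
\Pr\left\{\frac{|\langle x,x'\rangle|}{\|x\|\|x'\|} \ge u\right\} \le \Pr\{|\langle x,x'\rangle| \ge u(1-\epsilon)n\} + \Pr\{\|x\|^2 < (1-\epsilon)n\} + \Pr\{\|x'\|^2 < (1-\epsilon)n\}.
\end{equation*}

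\textbf{Step 3: bound each piece.} For $u \le \delta_1$ (so that $u(1-\epsilon) \le \delta_1$), the inner-product lemma gives
\begin{equation*}
\Pr\{|\langle x,x'\rangle| \ge u(1-\epsilon)n\} \le C_1 \exp\!\left(-(1-\eta')(1-\epsilon)^2 \cdot \tfrac{nu^2}{2}\right) \le C_1 \exp\!\left(-(1-\eta)\tfrac{nu^2}{2}\right),
\end{equation*}
by our choice of $\eta',\epsilon$. For the norm terms, Bernstein yields
\begin{equation*}
\Pr\{\|x\|^2 < (1-\epsilon)n\} \le 2\exp\!\left(-\tfrac{n}{2}\min\{\epsilon^2/\zeta^2,\epsilon/\zeta\}\right) =: 2\exp(-c_0 n),
\end{equation*}
where $c_0 = c_0(\epsilon,\pi) > 0$ is independent of $u$, and the same bound holds for $x'$.

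\textbf{Step 4: absorb the constant-exponent terms.} Finally choose $\delta \in (0,\delta_1]$ small enough that $(1-\eta)\delta^2/2 \le c_0$. Then for every $u \in [0,\delta]$ we have $c_0 n \ge (1-\eta)nu^2/2$, so $2\exp(-c_0 n) \le 2\exp(-(1-\eta)nu^2/2)$. Summing the three contributions gives the claim with $C := C_1 + 4$.

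\textbf{Main obstacle.} There is no deep obstacle; the only care required is in Step~1, where one must simultaneously arrange that the loss from trading $|\langle x,x'\rangle|$ for $u(1-\epsilon)n$ (the factor $(1-\epsilon)^2$) and the slack $\eta'$ in the underlying concentration bound still leave the target exponent $(1-\eta)$ intact, and that $\delta$ is chosen small enough in Step~4 so the two exponentially small (but not $u$-dependent) norm terms get absorbed into the $u$-dependent exponential on the right-hand side. The calculation is otherwise a direct application of the two preceding lemmas.
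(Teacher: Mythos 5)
Your proof is correct and takes essentially the same approach as the paper: a union bound over the event that the raw inner product is large or one of the norms is small, followed by the same two concentration lemmas and a choice of parameters (your $\eta',\epsilon,\delta$ versus the paper's $\eta/2,\alpha,\delta$) arranged so the norm-deviation terms are absorbed into the $u$-dependent exponential for all $u \le \delta$.
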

\begin{proof}
We combine the previous two lemmas. For $\alpha \in (0,1)$ to be chosen later,
\begin{align*}
\Pr\left\{\frac{|\langle x,x' \rangle|}{\|x\| \|x'\|} \ge u\right\} &\le \Pr\{|\langle x,x' \rangle| \ge (1-\alpha)un\} + \Pr\{\|x\|^2 \le (1-\alpha)n\} \\
&\qquad + \Pr\{\|x'\|^2 \le (1-\alpha)n\} \\
&\le C' \exp\left(-\left(1-\frac{\eta}{2}\right)\frac{n(1-\alpha)^2 u^2}{2}\right) + 4\exp\left(-\frac{n}{2} \min\left\{\frac{\alpha^2}{\zeta^2},\frac{\alpha}{\zeta}\right\}\right)
\end{align*}
for all $u \in [0,\delta'/(1-\alpha)]$, where $\delta',C',\zeta$ are positive constants depending on $\eta,\pi$. Choose $\alpha > 0$ to be a constant (depending on $\eta$) small enough so that $(1-\eta/2)(1-\alpha)^2 \ge 1-\eta$. Let $C = C'+4$ and
\[ \delta = \min\left\{\frac{\delta'}{1-\alpha} \,,\; \sqrt{\min\left\{\frac{\alpha^2}{\zeta^2},\frac{\alpha}{\zeta}\right\}}\right\}. \]
Now for all $u \in [0,\delta]$,
\begin{align*}
\Pr\left\{\frac{|\langle x,x' \rangle|}{\|x\| \|x'\|} \ge u\right\} &\le C'\exp\left(-(1-\eta)\frac{nu^2}{2}\right) + 4\exp\left(-\frac{nu^2}{2}\right) \\
&\le C\exp\left(-(1-\eta)\frac{nu^2}{2}\right),
\end{align*}
completing the proof.
\end{proof}

\subsection{Proof of Lemma~\ref{lem:L-term-3}}
\label{app:pf-lem-L-term-3}

\begin{proof}[Proof of Lemma~\ref{lem:L-term-3}]
Using Lemma~\ref{lem:ov-conc} and the scaling $t = \Omega(n)$, we have $\Pr\{A > t\} = \exp(-\Omega(n))$. From~\eqref{eq:def-A}, we have with probability 1 that $0 \le A \le n$, implying
\[ \exp^{\le D}(A) \le \exp^{\le D}(n) = \sum_{d=0}^D \frac{n^d}{d!} \le \sum_{d=0}^D n^d \le (D+1)n^D. \]
Now,
\[ \EE[\One_{A > t} \cdot \exp^{\le D}(A)] \le \Pr\{A > t\} \exp^{\le D}(n) \le \exp(-\Omega(n) + \log(D+1) + D \log n), \]
which is $o(1)$ because $D = o(n/\log n)$.
\end{proof}

\subsection{Proof of Theorem~\ref{thm:L-bound}}
\label{app:pf-thm-L-bound}

\begin{proof}[Proof of Theorem~\ref{thm:L-bound}]
Recall the assumptions $D = \omega(1)$ and $D = o(n/\log n)$. Recall~\eqref{eq:L-decomp} and~\eqref{eq:L-decomp-2}, whose terms are bounded in Lemmas~\ref{lem:L-term-1}, \ref{lem:L-term-2}, and~\ref{lem:L-term-3}.

To prove the lower bound $\|(L_\lambda)^{\le D}\|^2 \ge (1-\lambda^2)^{-1/2} - o(1)$, choose $t = t_n$ such that $t = \omega(1)$ and $t = o(D)$ (for instance, $t = \sqrt{D}$). Since the final term in~\eqref{eq:L-decomp} is non-negative,
\[ \|(L_\lambda)^{\le D}\|^2 \ge \EE[\One_{A \le t} \cdot \exp(A)] - \sum_{d=D+1}^\infty \frac{t^d}{d!} = (1-\lambda^2)^{-1/2} - o(1), \]
using Lemmas~\ref{lem:L-term-1} and~\ref{lem:L-term-2}.

To prove the upper bound $\|(L_\lambda)^{\le D}\|^2 \le (1-\lambda^2)^{-1/2} + o(1)$, choose $t = \rho n$ where $\rho > 0$ is the constant from Lemma~\ref{lem:L-term-2}. We have
\[ \|(L_\lambda)^{\le D}\|^2 \le \EE[\One_{A \le t} \cdot \exp(A)] + \EE[\One_{A > t} \cdot \exp^{\le D}(A)] = (1-\lambda^2)^{-1/2} + o(1), \]
using Lemmas~\ref{lem:L-term-2} and~\ref{lem:L-term-3}.
\end{proof}

\section{Lemmas for Proposition~\ref{prop:reduction}}

\subsection{Proof of Lemma~\ref{lem:construct-v}}
\label{app:pf-lem-construct-v}

\begin{proof}[Proof of Lemma~\ref{lem:construct-v}]
Write $\slope(\cdot,\cdot)$ for the slope of the line connecting two points, and use the shorthand $m_i = \slope(u_i,u_{i+1})$ and $\ell_i = \slope(v_i,v_{i+1})$, where, recall, $u_i = (a_i,\psi(a_i))$. Let $\gamma > 0$ be a constant to be chosen later. For $i = 1,\ldots,r-1$, choose $b_i < \psi(a_i)$ such that $\slope(u_{i-1},v_i) \ge (1-\gamma)m_{i-1}$ and $\slope(u_{i-1},v_i) > \slope(v_i,u_{i+1})$, which is possible because $u$ is in concave position. Note that
\begin{equation}\label{eq:ell-m-ratio}
\ell_i \ge \slope(u_i,v_{i+1}) \ge (1-\gamma)m_i \qquad\text{for } i = 0,\ldots,r-2
\end{equation}
and $\ell_{r-1} \ge m_{r-1}$. This implies $\val(\conc(v)) \ge (1-\gamma) \val(\conc(u))$, so using Lemma~\ref{lem:construct-u} we have $\val(\conc(v)) > \val(\phi)$ for a sufficiently small choice of $\gamma > 0$. To verify that $v$ is in concave position,
\[ \ell_{i-1} \ge \slope(u_{i-1},v_i) > \slope(v_i,u_{i+1}) \ge \ell_i, \]
and from \eqref{eq:ell-m-ratio}, $\ell_i \ge (1-\gamma)m_i > 0$.
\end{proof}

\subsection{Proof of Lemma~\ref{lem:alt-sum}}
\label{app:pf-lem-alt-sum}

\begin{proof}[Proof of Lemma~\ref{lem:alt-sum}]
Assume $m$ is even without loss of generality, as otherwise we can increase $m$ by one and add an extra term $h_{m-1} = 0$. The first inequality holds because each negative term can be paired with the preceding positive term, whose magnitude is strictly larger. Similarly, the argument to the square root is nonnegative. It therefore suffices to show
\[ \left(\sum_{i=0}^{m-1} (-1)^i h_i\right)^2 \le \sum_{i=0}^{m-1} (-1)^i h_i^2. \]
This identity admits a slick geometric proof by interpreting each side as the area of some region in $\RR^2$: the left-hand side is the area of the rectangles
\[ R_1 := ([h_{m-1},h_{m-2}] \cup [h_{m-3},h_{m-4}] \cup \cdots \cup [h_1,h_0])^{\times 2} \]
and the right-hand side is the area of the L-shaped regions
\[ R_2 := ([0,h_0]^{\times 2} \setminus [0,h_1]^{\times 2}) \cup ([0,h_2]^{\times 2} \setminus [0,h_3]^{\times 2}) \cup \cdots \cup ([0,h_{m-2}]^{\times 2} \setminus [0,h_{m-1}]^{\times 2}). \]
Since $R_1 \subseteq R_2$, the proof is complete.
\end{proof}

\section{Update on the Main Conjecture}
\label{app:update}

The original version of Conjecture~\ref{conj:new} that we posed in November 2023 read as follows.

\begin{conjecture}
Consider the spiked Wigner testing problem (Definition~\ref{def:wigner}). Fix any $0 < \delta_1 < \delta_2$, any $\lambda \in (0,1)$, and any $\pi$ satisfying Assumption~\ref{assum:pi}. Any sequence of functions $f = f_n$ computable in time $\exp(O(n^{\delta_1}))$ must satisfy
\begin{equation}\label{eq:conj-ratio-app}
\limsup_{n \to \infty} R_\lambda(f) \le \limsup_{n \to \infty} \sup_{g \,:\, \deg(g) \le n^{\delta_2}} R_\lambda(g).
\end{equation}
\end{conjecture}

This version of the conjecture was refuted in March 2025 by Ansh Nagda (personal communication). The issue is that a poly-time algorithm $f$ can ``cheat'' by exploiting rare events. Specifically, if $\pi$ is the sparse Rademacher prior (see Remark~\ref{rem:large-D}) with $\rho > 0$ a sufficiently small constant, $f$ can bet on the event that the signal is equal to one specific vector $v$, and output 1 when detecting this event (which is done by thresholding $v^\top Y v$), and 0 otherwise. This makes the left-hand side of~\eqref{eq:conj-ratio-app} unbounded for some $\lambda < 1$. In Conjecture~\ref{conj:new} we have implemented a plausible fix by requiring $f$ to output values in the range $[1/B,B]$ for a constant $B \ge 1$. Our main result, Corollary~\ref{cor:main-comp}, still holds under this modified conjecture without any changes to the proof, since Proposition~\ref{prop:reduction} produces a function $f$ whose outputs come from a finite list of positive constants.

\addcontentsline{toc}{section}{Acknowledgments}

\section*{Acknowledgments}

We thank Ahmed El Alaoui for helpful discussions.

\addcontentsline{toc}{section}{References}

\bibliographystyle{alpha}
\bibliography{main}

\end{document}